\def\@tocline#1#2#3#4#5#6#7{\relax
	\ifnum #1>\c@tocdepth 
	\else
	\par \addpenalty\@secpenalty\addvspace{#2}%
	\begingroup \hyphenpenalty\@M
	\@ifempty{#4}{%
		\@tempdima\csname r@tocindent\number#1\endcsname\relax
	}{%
		\@tempdima#4\relax
	}%
	\parindent\z@ \leftskip#3\relax \advance\leftskip\@tempdima\relax
	\rightskip\@pnumwidth plus4em \parfillskip-\@pnumwidth
	#5\leavevmode\hskip-\@tempdima
	\ifcase #1
	\or\or \hskip 1em \or \hskip 2em \else \hskip 3em \fi%
	#6\nobreak\relax
	\dotfill\hbox to\@pnumwidth{\@tocpagenum{#7}}\par
	\nobreak
	\endgroup
	\fi}
\date{\today}
\author{Luca Dall'Ava}
\title{Approximations of the balanced triple product $p$-adic $L$-function}
\newcommand{\z}{\mathbb{Z}}
\newcommand{\zp}{\mathbb{Z}_p}
\newcommand{\q}{\mathbb{Q}}
\newcommand{\cp}{\mathbb{C}_p}
\newcommand{\f}{\mathbb{F}}
\newcommand{\qp}{\mathbb{Q}_p}
\newcommand{\A}{\mathbb{A}}
\newcommand{\cc}{\mathbb{C}}
\newcommand{\ph}[1]{\phantom{#1}}
\newcommand{\Addresses}{
	{
		\bigskip
		\footnotesize
		
		L. Dall'Ava, \textsc{Dipartimento di Matematica, Università degli Studi di Milano,
			Milano, Italy.}\par\nopagebreak
		\textit{E-mail address:} \href{mailto:luca.dallava@unimi.it}{\texttt{luca.dallava@unimi.it}}\par\nopagebreak
		\textit{URL:} \url{https://sites.google.com/view/luca-dallava/}
		
		
	}
}
\newtheorem{theoremaleph}{Theorem}
\newtheorem{theorem}{Theorem}[subsection]
\newtheorem{definition}[theorem]{Definition}
\newtheorem{lemma}[theorem]{Lemma} 
\newtheorem{proposition}[theorem]{Proposition}
\newtheorem{assumption}[theorem]{Assumption}
\newtheorem{remark}[theorem]{Remark}
\newtheorem{example}[theorem]{Example}
\newtheorem{routine}[theorem]{Algorithmic routine}
\newtheorem{notation}[theorem]{Notation}
\newtheorem*{question*}{Question}
\numberwithin{equation}{subsection}
\newcommand{\commenttt}[1]{}
\newcommand{\mat}[4]{\left(\begin{smallmatrix}
		#1 & #2\\
		#3 & #4
	\end{smallmatrix}\right)}
\newcommand{\Mat}[4]{\left(\begin{matrix}
		#1 & #2\\
		#3 & #4
	\end{matrix}\right)}
\renewcommand{\gcd}{\mathrm{gcd}}
\newcommand{\lcm}{\mathrm{lcm}}
\newcommand{\hla}[1]{\hypertarget{#1}{#1}}
\newcommand{\hrf}[1]{\hyperlink{#1}{#1}}
\pgfplotsset{compat=1.16}
\begin{document}
	
	\begin{abstract}
		The main purpose of this note is to provide an algorithm for approximating the value of the balanced $p$-adic $L$-function, as constructed in \cite{Hsieh2021}, at the point $(2,1,1)$, which is lying outside of the interpolation region. The algorithmic procedure is obtained building on the work of \cite{FrancMasdeu2014} and considering finite-length geodesics on the Bruhat--Tits tree for $GL_2(\qp)$. We are interested in the case where at least one of the Hida families is associated with an elliptic curve over the rationals and we further restrict ourselves to the case where only one finite local sign of the functional equation is $-1$.\newline
		
		\noindent{\scshape{2020 Mathematics Subject Classification:}} 11F67, 11R52, 11Y16. \\
		{\scshape{Key words:}} Triple product $p$-adic $L$-function, quaternionic modular forms, finite-length geodesics, Bruhat--Tits tree.
	\end{abstract}
	
	\maketitle
	
	{   \hypersetup{hidelinks}
		\tableofcontents}
	
	\section{Introduction}
	
	Let $p\geq 5$ and $\ell\neq2$ be two distinct primes. Take $B$ to be the, unique up to isomorphism, definite rational quaternion algebra ramified exactly at $\ell$ and $\infty$. We consider three $p$-adic Hida families, $f_\infty$, $g_\infty$ and $h_\infty$, as follows:
	\begin{enumerate}[label=(\alph*)]
		\item $f_\infty$ is the unique Hida family associated with $f\in S_2(\Gamma_0(N_1\ell p),\q)$, a twist-minimal primitive newform corresponding to a $p$-ordinary elliptic curve $E/\q$. In particular, the family has tame level $N_1$ with trivial tame character;
		\item ${g_\infty}$ and ${h_\infty}$ are $p$-adic Hida families of tame level $N_2\ell$ with tame character $\psi$ and $\psi^{-1}$ respectively. We moreover suppose that $\psi$ and $\psi^{-1}$ are both primitive of square-free conductor $N_2$, with $N_2\mid N_1$. 
	\end{enumerate}
	Under some technical assumptions, which we recall in Section \ref{hypotheses}, Hsieh constructs in \cite{Hsieh2021} the \emph{balanced $p$-adic triple product $L$-function}, $\mathcal{L}^{bal}_{F_\infty}$, associated with the triple $F_\infty=(f_\infty,g_\infty,h_\infty)$. This $p$-adic $L$-function arises on the so-called balanced region, namely the region of triples of arithmetic points $((k_1,\varepsilon_1),(k_2,\varepsilon_2),(k_3,\varepsilon_3))$, such that 
	\begin{equation*}
		k_1+k_2+k_3 \equiv 0\pmod{2} \phantom{000}\textrm{and}\phantom{000} k_1+k_2+k_3>2k_i \phantom{00}\forall i=1,2,3,
	\end{equation*}
	and it extends to the whole product of the weight spaces. For the sake of simplicity, throughout the paper we will recall only the major details about $\mathcal{L}^{bal}_{F_\infty}$ and point the interested reader directly to \cite{Hsieh2021}, for the general construction and properties of the $p$-adic $L$-function. In the balanced region, $\mathcal{L}^{bal}_{F_\infty}$ satisfies the interpolation problem relating its square with the complex triple product $L$-function. On the contrary, outside this region the same relation is no more ensured; one of this points is $(2,1,1)$, where here $2$ and $1$ are points respectively associated with weight-2 and weight-1 specializations. The main motivation behind the study of the point $(2,1,1)$ comes from the renowned \emph{Birch and Swinnerton--Dyer conjecture} and its $p$-adic analogue, as one expects to recover information on the ($p$-adic) $L$-function associated with the weight-2 specialization $f_2=f$. More precisely, this analysis originates from a conjecture of Bertolini--Seveso--Venerucci and from the desire to provide algorithmic support to it. We refer the interested reader to Section \ref{meaning  and motivation} for more details.
	
	In this note, we deal with the above situation from a computational point of view and we provide a routine for approximating the value of $\mathcal{L}^{bal}_{F_\infty}$ at $(2,1,1)$. The $p$-adic $L$-function is constructed as the limit of certain theta-elements defined and studied in Section 4.6 of \cite{Hsieh2021}, in particular in Proposition 4.9; we describe explicitly these theta-elements and provide an algorithm for computing their values when evaluated at a triple of arithmetic points of the form $(2,(2,\varepsilon),(2,\varepsilon))$, for $\varepsilon$ a primitive $p$-adic character of conductor $p^n$. This allows to approximate the value $\mathcal{L}^{bal}_{F_\infty}(2,1,1)$ as the limit over the increasing conductor $p^n$ of such theta-elements evaluated at $(2,(2,\varepsilon),(2,\varepsilon))$. We remark here that the restriction to a definite quaternion algebra ramified at only one rational prime is not strictly necessary and the approach used can be generalized to deal with more primes of ramification. On the other hand, one necessarily requires $f_\infty$ to have trivial tame character and $g_\infty$ and $h_\infty$ to have opposite tame character; this last hypothesis is mandatory to isolate the component associated with $f$ and to express the one depending on $g_\infty$ and $h_\infty$ as sum of functions of the finite-length geodesics on the Bruhat--Tits tree.
	
	Our strategy relies on the fact that, considering weight-$2$ specializations together with the Approximation Theorem (see Lemma \ref{p-adic model}), one can interpret quaternionic modular forms, with trivial character, as functions on suitable $p$-adic double quotients. Section \ref{geodesics} is devoted to study part of these $p$-adic double quotients, namely $GL_2(\qp)/\qp^\times\Gamma_0(p^n\zp)$, which can be identified with the space of length-$n$ geodesics on the Bruhat--Tits tree for $GL_2(\qp)$. Moreover, the canonical projection maps correspond to forgetting the end point of a geodesic, as observed in Lemma \ref{proj maps}. In Section \ref{classes of representatives} and Section \ref{the algorithm} we supply explicit matrix representatives for $GL_2(\qp)/\qp^\times\Gamma_0(p^n\zp)$ and extend the algorithm developed in \cite{FrancMasdeu2014} to the case of finite-length geodesics (see Algorithm \ref{algorithm}). Consider now the three specializations $(f,g_{(2,\varepsilon)},h_{(2,\varepsilon)})$ and the triple of eigenvalues at $p$,
	$(a_p(f), a_p(g_{(2,\varepsilon)}\otimes \varepsilon_\A^{-1}), a_p(h_{(2,\varepsilon)}))$. Let $F^1$ be the quaternionic modular form associated with $f$ (which sometimes corresponds to the harmonic cocycle associated with $f$). Taking $G^n$ (resp.  $H^n$) to be the quaternionic modular form (for quaternionic families chosen as in \cite{Hsieh2021}, Theorem 4.5) associated with $g_{(2,\varepsilon)}\otimes \varepsilon_\A^{-1}$ (resp. $h_{(2,\varepsilon)}$) we can identify the product $G^n \cdot H^n$ as a function of the length-$n$ geodesics on the Bruhat--Tits tree. We hence obtain the following theorem (see Proposition \ref{Theta element} and Theorem \ref{theorem approximation L}). 
	\begin{theoremaleph}
		The theta-element appearing in the triple product $p$-adic $L$-function $\mathcal{L}^{bal}_{F_\infty}$, evaluated at $(2,(2,\varepsilon),(2,\varepsilon))$, is equal to
		\begin{multline}\tag{1}\label{eq thm b}
			\Theta_{{F_\infty^{B}}'}\left(((2,\varepsilon),2,(2,\varepsilon))\right)=\frac{(1-p^{-1})}{a_p(f)^n  }\cdot \sum_{e\in \Gamma\backslash \mathcal{E}(\mathcal{T})} \frac{F^1(e)}{\# Stab_{\left(R^1\left[1/p\right]\right)^1}(e)}\cdot\\ 
			\cdot \left(\frac{\varepsilon^{-1}_{\A}\left(p^n\right)\cdot p^{2n}}{a_p\left(g_{(2,\varepsilon)}\otimes \varepsilon_\A^{-1}\right)^n a_p\left(h_{(2,\varepsilon)}\right)^n}\right)
			\sum_{g \in Geod_n(\mathcal{T})(e)} G^n\left(g\left(\begin{smallmatrix}
				1 & p^{-n}\\
				0 & 1
			\end{smallmatrix}\right)\right)\cdot H^n\left(g\left(\begin{smallmatrix}
				0 & p^{-n}\\
				-p^{n} & 0
			\end{smallmatrix}\right)\right).
		\end{multline}
		Furthermore, by construction, the value at $(2,1,1)$ of the balanced $p$-adic $L$-function (up to a unit in the fraction field of the Iwasawa algebra associated with the triple of Hida families) is
		\begin{align*}
			\mathcal{L}^{bal}_{F_\infty}(2,1,1)\overset{\cdot}{\equiv} \lim\limits_{\varepsilon\rightarrow 1}  \Theta_{{F_\infty^{B}}'}((2,\varepsilon),2,(2,\varepsilon))
		\end{align*}
		and the limit $\lim\limits_{\varepsilon\rightarrow 1} \Theta_{{F_\infty^{B}}'}((2,\varepsilon),2,(2,\varepsilon))$ can be algorithmically approximated with a given $p$-adic precision.
	\end{theoremaleph}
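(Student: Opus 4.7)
The plan is to unwind Hsieh's construction of the theta-element at an arithmetic triple of the form $(2,(2,\varepsilon),(2,\varepsilon))$ and to rewrite it using the $p$-adic model of quaternionic modular forms on $B$. First I would recall from Proposition 4.9 of \cite{Hsieh2021} the explicit expression of $\Theta_{F_\infty^B}$ as an averaged product of the three quaternionic specializations, paired through their $GL_2(\qp)$-component in the adelic description. Using the Approximation Theorem (Lemma \ref{p-adic model}), each specialization is realized as a function on the appropriate quaternionic double coset space, and at $p$ the relevant factor identifies with the double quotient $GL_2(\qp)/\qp^\times \Gamma_0(p^n\zp)$, where $p^n$ is the conductor of $\varepsilon$.

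Next I would invoke Section \ref{geodesics} to identify $GL_2(\qp)/\qp^\times\Gamma_0(p^n\zp)$ with the length-$n$ oriented geodesics on the Bruhat--Tits tree $\mathcal{T}$; the canonical projection to the edges (the case $n=1$) corresponds to forgetting the tail of the geodesic (Lemma \ref{proj maps}). Since $F^1$ comes from a newform of level exactly $p$ at $p$, it factors through the projection onto edges, whereas the product $G^n\cdot H^n$ naturally lives on length-$n$ geodesics. This lets me rearrange Hsieh's sum as a double sum: an outer $\Gamma$-orbit sum over edges $e$ with weight $F^1(e)/\#\mathrm{Stab}_{(R^1[1/p])^1}(e)$ coming from the strong approximation comparison, and an inner sum over geodesics extending $e$, on which $G^n\cdot H^n$ is evaluated. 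The matrices $\bigl(\begin{smallmatrix} 1 & p^{-n}\\ 0 & 1\end{smallmatrix}\bigr)$ and $\bigl(\begin{smallmatrix} 0 & p^{-n}\\ -p^n & 0\end{smallmatrix}\bigr)$ arise from the translation of the $\varepsilon_\A^{-1}$-twist and an Atkin--Lehner-type normalization needed to align the three forms on a single matrix coefficient, while the scalar $(1-p^{-1})\varepsilon_\A^{-1}(p^n)p^{2n}/\bigl(a_p(f)^n a_p(g_{(2,\varepsilon)}\otimes\varepsilon_\A^{-1})^n a_p(h_{(2,\varepsilon)})^n\bigr)$ collects the $U_p$-normalising constants of the ordinary projectors at level $p^n$ and the measure change involved in moving from level $p$ to level $p^n$.

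For the second assertion, by construction in \cite{Hsieh2021} the $p$-adic $L$-function $\mathcal{L}^{bal}_{F_\infty}$ is, up to a unit in the fraction field of the associated Iwasawa algebra, the $p$-adic limit of such theta-elements as the character conductor grows. Specialising the weight variables at $2$ and letting $\varepsilon\to 1$ brings the triple of arithmetic points to the non-interpolation point $(2,1,1)$; continuity of the theta-elements in the character variable, inherited from the Hida-theoretic construction, yields the displayed limit formula. For each fixed $n$, the right-hand side of \eqref{eq thm b} is a finite sum whose terms are computed by Algorithm \ref{algorithm}, which extends the edge-enumeration technique of \cite{FrancMasdeu2014} to length-$n$ geodesics using the explicit representatives of Section \ref{classes of representatives}. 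Combined with the $p$-adic convergence rate of the sequence of theta-elements, this gives approximations of $\mathcal{L}^{bal}_{F_\infty}(2,1,1)$ with arbitrary prescribed precision.

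The main obstacle I expect is the careful bookkeeping of local $p$-adic normalisations and test-vector choices. Translating Hsieh's theta-element, which is formulated in terms of automorphic test vectors, into the concrete double sum over geodesics requires precisely identifying the quaternionic modular forms $G^n$ and $H^n$ attached to the twisted and untwisted weight-$2$ specialisations, tracking every $U_p$-eigenvalue and Euler-factor correction at $p$, and verifying that the matrices $\bigl(\begin{smallmatrix} 1 & p^{-n}\\ 0 & 1\end{smallmatrix}\bigr)$ and $\bigl(\begin{smallmatrix} 0 & p^{-n}\\ -p^n & 0\end{smallmatrix}\bigr)$ are indeed the correct shifts to express the test vector on $GL_2(\qp)$ in terms of the initial edge and the endpoint data of a length-$n$ geodesic.
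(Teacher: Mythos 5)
Your proposal follows essentially the same route as the paper: unwinding Hsieh's Proposition 4.9 at the weight-$(2,2,2)$ specialization, transporting it through the Strong Approximation Theorem (Lemma \ref{p-adic model}) to the double quotient $\Gamma\backslash GL_2(\qp)/\qp^\times\Gamma_0(p^n\zp)$, identifying the latter with length-$n$ geodesics, and exploiting that $F^1$ is a function on edges (the harmonic cocycle attached to $f$) to split the sum into an outer sum over $\Gamma$-classes of edges and an inner sum over geodesics extending each edge, with the second assertion following from the construction of $\mathcal{L}^{bal}_{F_\infty}$ as a limit of theta-elements and from Algorithm \ref{algorithm}. The only ingredient you leave implicit is Proposition \ref{left inv order}, which gives $R^n[1/p]=R^1[1/p]$ and hence makes the outer group $\Gamma$ and the stabilizers independent of $n$, as required for the stated formula.
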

	\noindent The outer sum in Equation (\ref{eq thm b}) is independent of the limit process; it depends only on $f$ and the representatives of the edges $\mathcal{E}(\mathcal{T})$ of the Bruhat--Tits tree, modulo the action of $\Gamma$, the image in $PGL_2(\qp)$ of the invertible elements of a suitable Eichler $\z[1/p]$-order $R^1\left[1/p\right]$. Differently, the inner component (together with $a_p(f)^n$) varies during the limit process and each summation depends on the class of an edge and on the values of $G^n$ and $H^n$ on length-$n$ geodesics.\newline
	
	\textbf{Acknowledgments:} This note represents one of the results in the author's doctoral dissertation \cite{DallAva2021PhD}. He wishes to express his gratitude to his supervisor, Massimo Bertolini, for his encouragement in studying the limit of the triple product $L$-function from both a theoretical and a computational point of view. The author is grateful to the anonymous referee for the valuable comments and suggestions, which helped considerably the improvement of the manuscript. Many thanks go to Marc Masdeu for answering disparate questions about \cite{FrancMasdeuSagecode2011} and the state of the art of computations of Brandt matrices. In the end, the author thanks Jonas Franzel and Matteo Tamiozzo for several helpful discussions. The author is grateful to the Universit\"at Duisburg--Essen, where the main part of this work has been carried out, as well as to the Università degli Studi di Padova (Research Grant funded by PRIN 2017 ``Geometric, algebraic and analytic methods in arithmetic'') and Università degli Studi di Milano (Research fellowship ``Motivi, regolatori e geometria aritmetica'') for their financial support.
	
	\section{Some recalls}\label{Some recalls}
	
	\subsection{Recalls on points on the weight space}\label{weight space}
	
	In this section we recall the main properties of the ($\cp$-points of the) weight space and produce sequences of points on it. We refer the reader to Section 1.4 of \cite{coleman_mazur_1998} for a thorough discussion. We are interested in the $\cp$-points of the weight space, which are identified with $\mathcal{X}_{\zp^\times}\left(\cp\right)=\mathrm{Hom}_{grp}^{cts}\left(\zp^\times,\cp^\times\right)$. 
	We fix the dense embedding of $\z$ in $\mathcal{X}_{\zp^\times}\left(\zp\right)=\mathrm{Hom}_{grp}^{cts}\left(\zp^\times,\cp^\times\right)$ via the association
	\begin{equation*}
		\z\ni k \longmapsto \left[x\mapsto x^{k-2}\right]\in \mathcal{X}_{\zp^\times}\left(\zp\right),
	\end{equation*}
	and we refer to the points in $\z\cap \mathcal{X}_{\zp^\times}$ as the \emph{integer weights} or the \emph{integer points} of the weight space. Let $\Lambda=\zp\llbracket \zp^\times\rrbracket$ be the Iwasawa algebra. We consider the following characters:
	\begin{enumerate}
		\item The (induced) Teichm\"uller character $\omega:\Lambda \longrightarrow \cp$ such that $\omega \left(1+p\mathbb{Z} _{p}\right) =1$ and $\omega_{|\left( \mathbb{Z} /p\mathbb{Z} \right) ^{\times}}$ is the usual Teichm\"uller character.
		
		\item The character $\eta _{k}:\Lambda \longrightarrow \cp$ for $k\in\zp$, such that $a\longmapsto \langle\langle a \rangle\rangle^{k}$. Here $\langle\langle- \rangle\rangle$ is the character induced by the projection $\zp^\times\ni a\longmapsto \langle\langle a\rangle\rangle=a\omega(a)^{-1}\in 1+p\zp$.
	\end{enumerate}
	The fixed dense embedding can be written in terms of the above characters as $\z\ni k = \omega^{k-2}\cdot \eta_{k-2}.$
	We define an \emph{arithmetic point} on the weight space to be a point of the form $\chi(-) \cdot k:\zp^\times\longrightarrow \mathcal{O}^\times$, where $\chi$ is a finite order character of $\zp^\times$ and $k$ is an integer point corresponding to $k\in \z_{\geq 2}$. We identify these points with the corresponding couple $(k,\chi)=\omega^{k-2}\cdot \eta_{k-2}\cdot \chi$. In the end, we say that a point is \emph{classical}, with respect to a certain Hida family $f_\infty$, if the specialization of $f_\infty$ at that point is a classical modular form. We recall Hida's (and more generally Coleman's) Classicality Theorem, which ensures that arithmetic points are classical, independently of the Hida family. The inclusion is strict, \emph{e.g.} due to classical forms of weight 1 in the ordinary case.
	
	Aiming to consider families of classical arithmetic points converging to a triple of points with weights $(2,1,1)$, we should study limits to the weight-1 points in the weight space. Such points correspond, under our fixed convention, to elements of the form $\varepsilon\cdot\omega^{-1}\cdot \eta_{-1}$, for $\varepsilon$ a finite character of order a power of $p$.
	
	\begin{remark}\label{Density arithm}
		The weight-2 arithmetic points (and hence the arithmetic points in general) are dense in the weight space. In particular, there exists a sequence of weight-2 arithmetic points converging to $(1,\chi)$, for $\chi$ any finite character of order a power of $p$.
	\end{remark}
	\noindent We point out that, taken $\left\{(2,\varepsilon)=\varepsilon \right\}$ a sequence of weight-2 points converging to a weight-1 point, the sequence of the conductors $cond(\varepsilon_n)=p^{c(\varepsilon_n)}$ has to tend to infinity, for $n$ increasing.	

	
	\subsection{Recalls on the geodesics on the Bruhat--Tits tree}\label{geodesics}
	
	We collect here some known facts about the Bruhat--Tits tree and we focus our attention on the study of finite-length geodesics. We refer to \cite{Rhodes2001}, the book \cite{Serre1980}, and to \cite{dasgupta_teitelbaum_2008}, for all the proofs and details.
	
	Let $\mathcal{T}$ be the Bruhat--Tits tree for the group $PGL_2(\qp)$. It is the infinite regular tree of valence $p+1$ whose vertices are associated with the quotient space
	\begin{equation*}
		GL_2(\qp)/\qp^\times GL_2(\zp)=PGL_2(\qp)/PGL_2(\zp).
	\end{equation*}
	We set $\mathcal{V}(\mathcal{T})$ to be set of vertices of $\mathcal{T}$ and $\mathcal{E}(\mathcal{T})$ the set of directed edges in $\mathcal{T}$. A directed edge $e$ is represented by an ordered couple $e=(v_0,v_1)$ of two vertices, respectively the origin and the terminus of the edge $e$. In an analogous fashion, we define a directed $n$-path $d$, as a sequence of $n+1$ vertices $d=(v_0,\ldots,v_{n})$ such that for each $i=0,\ldots n$, $(v_i,v_{i+1})\in \mathcal{E}(\mathcal{T})$; we denote the origin and terminus of $d$ by, respectively $o(d)=v_0$ and $t(d)=v_n$. If moreover a directed $n$-path $d$ satisfies the condition $v_i\neq v_{i+2}$, we say that $d$ is a \emph{geodesic of length $n$} on $\mathcal{T}$. We denote the set of all geodesics of length $n$ by $Geod_{n}(\mathcal{T})$. Obviously, $\mathcal{E}(\mathcal{T})=Geod_1(\mathcal{T})$ and $\mathcal{V}(\mathcal{T})=Geod_0(\mathcal{T})$. Since $\mathcal{T}$ is a tree, a geodesic is a path without backtracking and it is uniquely determined by its origin and terminus, that is, by an ordered couple of vertices. We have the action by left multiplication of $PGL_2(\qp)$ on the vertices of $\mathcal{T}$, which extends to an action on the set of geodesics $Geod_n(\mathcal{T})$. It is well known that the action is transitive on $\mathcal{V}(\mathcal{T})$ and $\mathcal{E}(\mathcal{T})$, but the same holds true for $Geod_n(\mathcal{T})$. Describing each geodesic as the couple consisting of origin and terminus point, it is clear that the stabilizer of a geodesic $g$ is
	\begin{equation*}
		Stab_{PGL_2(\qp)}(g)=Stab_{PGL_2(\qp)}(o(g))\cap Stab_{PGL_2(\qp)}(t(g))
	\end{equation*}
	\emph{i.e.} a matrix stabilizes $g$ if and only if it stabilizes both its origin and terminus. We fix, for each $n\geq 0$, a privileged geodesic of length $n$ (we call it privileged vertex and edge, respectively in the case $n=0$ and $n=1$) namely
	\begin{equation*}
		g_n=\left(\mat{1}{0}{0}{1}\cdot PGL_2(\zp), \mat{1}{0}{0}{p}\cdot PGL_2(\zp),\ldots, \mat{1}{0}{0}{p^n}\cdot PGL_2(\zp)\right).
	\end{equation*}
	For ease of notation we often write $g_n=\left(\mat{1}{0}{0}{1}, \mat{1}{0}{0}{p},\ldots, \mat{1}{0}{0}{p^n}\right)$, as well for generic geodesics, where we write a choice of representatives instead of the classes. It is readily computed that
	\begin{equation*}
		Stab_{PGL_2(\qp)}\left(\mat{1}{0}{0}{1}\cdot PGL_2(\zp)\right)=PGL_2(\zp)
	\end{equation*}
	as well as that
	\begin{equation*}
		Stab_{PGL_2(\qp)}\left(\mat{1}{0}{0}{p^n}\cdot PGL_2(\zp)\right)=\mat{1}{0}{0}{p^n}PGL_2(\qp)\mat{1}{0}{0}{p^n}^{-1},
	\end{equation*}
	thus the characterization of the stabilizers of a geodesics implies that the stabilizer of the privileged geodesic $g_n$ is
	\begin{equation*}
		Stab_{PGL_2(\qp)}(g_n)=PGL_2(\qp)\cap\mat{1}{0}{0}{p^n}PGL_2(\qp)\mat{1}{0}{0}{p^n}^{-1}\\
		=\left\{\mat{a}{b}{c}{d} \in PGL_2(\zp)\mid c\in p^n\zp\right\}.
	\end{equation*}
	We denote the above stabilizer by $\overline{\Gamma}_0(p^n\zp)$ as it is the image in $PGL_2(\qp)$ of
	\begin{equation*}
		\Gamma_0(p^n\zp)=\left\{\mat{a}{b}{c}{d} \in GL_2(\zp)\mid c\in p^n\zp\right\}.
	\end{equation*}
	Therefore, the quotient map induces the identification
	\begin{equation*}
		PGL_2(\qp)/\overline{\Gamma}_0(p^n\zp) = 	GL_2(\qp)/\qp^\times \Gamma_0(p^r\zp),
	\end{equation*}
	and since the $PGL_2(\qp)$-action is transitive, we obtain the isomorphism
	\begin{align}\label{geodesics iso}
		GL_2(\qp)/\qp^\times \Gamma_0(p^r\zp)\cong 	Geod_{r}(\mathcal{T})
	\end{align}
	given by
	\begin{equation*}
		\gamma \longmapsto \gamma\cdot g_n= \left(\gamma\cdot\mat{1}{0}{0}{1}, \gamma\cdot\mat{1}{0}{0}{p},\ldots, \gamma\cdot\mat{1}{0}{0}{p^n}\right).
	\end{equation*}
	The inclusion of $\Gamma_0(p^n\zp)$ into $\Gamma_0(p^{m}\zp)$ for each $m\leq n$ provides the natural ($GL_2(\qp)$-equivariant) quotient map
	\begin{equation*}
		\rho_n: GL_2(\qp)/\qp^\times \Gamma_0(p^n\zp)\longrightarrow GL_2(\qp)/\qp^\times 	\Gamma_0(p^{n-1}\zp).
	\end{equation*}
	We can consider the surjective map
	\begin{equation*}
		\alpha_n: 	Geod_{n}(\mathcal{T})\longrightarrow Geod_{n-1}(\mathcal{T})
	\end{equation*}
	defined by
	\begin{equation*}
		(v_0,\ldots,v_{n-1},v_{n})\longmapsto (v_0,\ldots,v_{n-1}),
	\end{equation*}
	\emph{i.e.} the map extracting the initial geodesic of length $n-1$. We note that this map is $GL_2(\qp)$-equivariant. 
	\begin{lemma}\label{proj maps}
		The diagrams
			\begin{equation*}
				\begin{tikzcd}
					\vdots \arrow[d,two heads, dashed ,"\rho_{n+1}"] && \vdots \arrow[d,two heads, dashed 	,"\alpha_{n+1}"]\\
					GL_2(\qp)/\qp^\times \Gamma_0(p^n\zp)\arrow[rr, leftrightarrow,"\cong"]\arrow[d,two 	heads,"\rho_n"] && Geod_{n}(\mathcal{T})\arrow[d,two heads,"\alpha_n"]\\
					GL_2(\qp)/\qp^\times \Gamma_0(p^{n-1}\zp)\arrow[rr, leftrightarrow,"\cong"] \arrow[d,two heads, dashed ,"\rho_{n-1}"] && Geod_{n-1}(\mathcal{T}) \arrow[d,two heads, dashed 	,"\alpha_{n-1}"]\\
					\vdots \arrow[d,two heads, dashed 	,"\rho_{2}"] && \vdots \arrow[d,two heads, dashed ,"\alpha_{2}"]\\
					GL_2(\qp)/\qp^\times 	\Gamma_0(p\zp)\arrow[rr, leftrightarrow,"\cong"]\arrow[d,two heads,"\rho_1"] && \mathcal{E}(\mathcal{T})\arrow[d,two heads,"\alpha_1"]\\
					GL_2(\qp)/\qp^\times GL_2(\zp) 	\arrow[rr, leftrightarrow,"\cong"] && \mathcal{V}(\mathcal{T})
				\end{tikzcd}
			\end{equation*}
		are commutative.
	\end{lemma}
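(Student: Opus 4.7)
The plan is to verify commutativity of each square by direct diagram chase, since the isomorphism (\ref{geodesics iso}) is given explicitly by $[\gamma]\mapsto \gamma\cdot g_n$ and both $\rho_n$ and $\alpha_n$ admit equally explicit descriptions. Fix $n\geq 1$ and pick a representative $\gamma\in GL_2(\qp)$ of a class in $GL_2(\qp)/\qp^\times\Gamma_0(p^n\zp)$. Going first right and then down, the isomorphism sends $[\gamma]$ to the length-$n$ geodesic $\bigl(\gamma\mat{1}{0}{0}{1}, \gamma\mat{1}{0}{0}{p},\ldots,\gamma\mat{1}{0}{0}{p^n}\bigr)$, and $\alpha_n$ erases the last entry to produce $\bigl(\gamma\mat{1}{0}{0}{1},\ldots,\gamma\mat{1}{0}{0}{p^{n-1}}\bigr)=\gamma\cdot g_{n-1}$. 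Going first down and then right, the map $\rho_n$ simply relabels $[\gamma]$ as its class modulo the larger subgroup $\qp^\times\Gamma_0(p^{n-1}\zp)$, and the isomorphism at level $n-1$ then sends this class precisely to $\gamma\cdot g_{n-1}$. The two paths therefore agree, so the square commutes.

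For this to constitute a proof one only has to check that both routes are well-defined, \emph{i.e.}\ independent of the chosen representative $\gamma$. This is where the tower structure of the $\Gamma_0(p^n\zp)$ is used: any other representative is of the form $\gamma\cdot\lambda\cdot\beta$ with $\lambda\in\qp^\times$ and $\beta\in\Gamma_0(p^n\zp)\subseteq\Gamma_0(p^{n-1}\zp)$. Since $\lambda$ acts trivially on $\mathcal{T}$ and $\beta$ lies in the stabilizer of $g_n$ (hence a fortiori of $g_{n-1}$) as computed in the paragraph preceding the lemma, both $\gamma\cdot g_n$ and $\gamma\cdot g_{n-1}$ are unchanged, and the same $\beta$ still lies in $\Gamma_0(p^{n-1}\zp)$ on the bottom row. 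Thus every map in the diagram is a well-defined function of the class, and the computation above is legitimate.

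Finally, the dashed arrows at the top and bottom of the diagram are obtained by the same verification applied at every level, so iterating the argument yields commutativity of the entire infinite tower. No serious obstacle is expected here; the statement is essentially a reformulation of the fact that, under the chosen normalization of the privileged geodesics $g_n$, the natural projection between quotient spaces corresponds geometrically to forgetting the end point of the geodesic.
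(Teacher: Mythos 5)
Your proposal is correct and follows essentially the same route as the paper: a direct diagram chase using the explicit formula $[\gamma]\mapsto\gamma\cdot g_n$ for the isomorphism (\ref{geodesics iso}), with the key point in both cases being that $\Gamma_0(p^n\zp)\subseteq\Gamma_0(p^{n-1}\zp)$ stabilizes $g_{n-1}$, so the two paths around each square agree independently of the chosen representative. Your explicit check of well-definedness is a slightly more careful packaging of what the paper does by writing two representatives as $A=BC$ with $C$ in the stabilizer, but the mathematical content is identical.
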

	\begin{proof}
		Let $n>1$ and take $\bar{A} \in GL_2(\qp)/\qp^\times \Gamma_0(p^n\zp)$ and $\bar{B}\in GL_2(\qp)/\qp^\times \Gamma_0(p^{n-1}\zp)$ such that $\rho_n(\bar{A})=\bar{B}$. This means that there exists a matrix $C\in \overline{\Gamma}_0(p^{n-1}\zp)=Stab_{PGL_2(\qp)}(g_{n-1})$ such that $A=BC$ as matrices in $PGL_2(\qp)$. Thus,
			\begin{equation*}
				\begin{tikzcd}
					\bar{A}=\bar{B}\bar{C} \arrow[rr, leftrightarrow]\arrow[d,mapsto,"\rho_n"] && 	\left(A\cdot\mat{1}{0}{0}{1},A\cdot\mat{1}{0}{0}{p},\ldots,A\cdot\mat{1}{0}{0}{p^{n-1}}, A\cdot\mat{1}{0}{0}{p^n}\right) \arrow[d,mapsto,"\alpha_n"]\\
					\bar{B}\arrow[rr, leftrightarrow] && \left(A\cdot\mat{1}{0}{0}{1}, 	A\cdot\mat{1}{0}{0}{p},\ldots,A\cdot\mat{1}{0}{0}{p^{n-1}}\right)
				\end{tikzcd}
			\end{equation*}
		because
		\begin{equation*}
			\left(B\cdot\mat{1}{0}{0}{1}, B\cdot\mat{1}{0}{0}{p},\ldots,B\cdot\mat{1}{0}{0}{p^{n-1}}\right)=	\left(A\cdot\mat{1}{0}{0}{1}, A\cdot\mat{1}{0}{0}{p},\ldots,A\cdot\mat{1}{0}{0}{p^{n-1}}\right)
		\end{equation*}
		as $\alpha_n( A\cdot g_n)=A\cdot\alpha_n( g_n)=A\cdot g_{n-1}= B\cdot (C\cdot g_{n-1}) = B\cdot g_{n-1}$. The proof for the case $n=1$ is the same, taking into account that $PGL_2(\zp)$ stabilizes $g_0=\left(\mat{1}{0}{0}{1}\right)$. 
	\end{proof}
	\begin{remark}
		Since the Bruhat--Tits tree is $p+1$-regular, we have exactly $p+1$ preimages via $\alpha_1$ and $p$ preimages via $\alpha_n$, for each $n\geq 2$. Hence, we readily note that, for each $n\geq 2$,
		\begin{equation*}
			[\Gamma_0(p^{n-1}\zp):\Gamma_0(p^n\zp)]=[\overline{\Gamma}_0(p^{n-1}\zp): \overline{\Gamma}_0(p^n\zp)]=p
		\end{equation*}
		and
		\begin{equation*}
			[GL_2(\zp):\Gamma_0(p\zp)]=[PGL_2(\zp): \overline{\Gamma}_0(p\zp)]=p+1.
		\end{equation*}
		In particular, for each fixed vertex $v$ in $\mathcal{T}$, the set $Geod_n(v)$ of \emph{geodesic of length $n$ and origin $v$} is the preimage of $v$ via the composition $\alpha_n\circ\cdots\circ\alpha_1$ and thus it has cardinality $\# 	Geod_n(v)=[GL_2(\zp):\Gamma_0(p^n\zp)]=(p+1)p^{n-1}.$			
    \end{remark}
	
	\subsection{Recalls on quaternionic modular forms and Brandt matrices with characters}
	
	In order to explicitly compute the values of quaternionic modular forms with character we need the notion of Brandt matrices. In this section we recall the needed notions about quaternionic orders and Brandt matrices.
	
	We begin by fixing, once and for all, a choice of field embeddings $\overline{\q}\hookrightarrow\overline{\qp}\hookrightarrow \cc$. Moreover, for any prime $q$ we denote the $q$-adic valuation by $v_q$. In this section, let $B$ be the unique (up to isomorphism) quaternion algebra over $\q$ with discriminant $D$. For each prime $q$ and any positive integer $N$ prime to $D$, we set
	\begin{align*}
		\Gamma_0(N\z_q)=\left\{\gamma\in M_2(\z_q)\mid \gamma\equiv \mat{*}{*}{0}{*} \pmod{N\z_q}\right\}=\Gamma_0(q^{v_q(N)}\z_q).
	\end{align*}
	Let $R$ be an Eichler order of level $N$ and we fix isomorphisms $\iota_q:B_q=B\otimes \q_q\cong M_2(\q_q)$ for any $q\nmid D$ such that $\iota_q(R\otimes \z_q)=\Gamma_0(N\z_q)$. 
	\begin{notation}
		As above, we set $B_q=B\otimes \q_q$, $R_q=R\otimes \z_q$ for any order $R$, $\widehat{B}=B\otimes_\q \A_{\q,f}$ and $\widehat{R}=R\otimes_\z \widehat{\z}$ where $\A_{\q,f}=\q \widehat{\z}$ are the finite ad\`{e}les of $\q$. Similarly, we set $B(\A_\q)^\times=(B\otimes_\q\A_\q)^\times$ and $ R(\A_\q)^\times=\{r\in B(\A_\q)^\times\mid (r_q)_{q<\infty}\in \widehat{R}^\times\}$.
	\end{notation}
	
	\subsubsection{Characters}\label{lift character}
	
	Let $R$ be as above and consider $\chi$ to be a Dirichlet character of conductor $C$, where $C$ is prime to the discriminant $D$ and such that $C\mid N$. We want to extend $\chi$ to a character $\widetilde{\chi}$ of $\widehat{R}$ as in Section 7.2 of \cite{HPS1989orders}, which we refer to for further details about the general case. First of all, we decompose $\chi=\prod_{q\mid C}\chi_q$ with the Chinese Reminder Theorem and we define each character of $R_q$, $\widetilde{\chi_q}$, as $\widetilde{\chi_q}(\alpha)=\chi_q(d)$ for $\alpha=\left(\begin{smallmatrix}
		a & b\\
		c & d
	\end{smallmatrix}\right)\in R_q=\Gamma_0(N\z_q)$. Patching together the local lifts, we define
	\begin{equation*}
		\widetilde{\chi}(b)=\prod_{q\mid C}\widetilde{\chi_q}(b_q)
	\end{equation*}
	for $b\in B(\A_\q)^\times$ such that $b_q\in R_q^\times\subset B_q^\times$. In particular, if $I$ is a lattice in $B$ such that $I_q=I\otimes\z_q=R_q$ for each $q|C$, and $b\in I$, we have
	\begin{equation*}
		\widetilde{\chi}(b)=\prod_{q\mid C}\widetilde{\chi_q}(b).
	\end{equation*}
	
	\subsubsection{Quaternionic modular forms}\label{Quaternionic modular forms of weight $2$}
	As we are interested only in considering weight-2 modular forms, we are recalling their definition and the few properties we need.
	\begin{definition}\label{quaternionic mf def}
		One defines the space of weight-$2$ quaternionic modular forms with level structure $R(\A_\q)^\times$, character $\chi$ and $\cc$-coefficients, as the $\cc$-vector space $S_2(R, \widetilde{\chi})$ of all continuous functions
		\begin{equation*}
			\varphi:B(\A_\q)^\times\longrightarrow\cc
		\end{equation*}
		satisfying
		\begin{equation*}
			\varphi(b \tilde{b} r)=\widetilde{\chi}^{-1}(r)\varphi(\tilde{b})
		\end{equation*}
		for all $b\in B^\times$, $\tilde{b}\in B(\A_\q)^\times$ and $r\in R(\A_\q)^\times$.
	\end{definition}
	\noindent As in Chapter 5 of \cite{HPS1989basis}, we can decompose $B(\A_\q)^\times$ as a finite union of distinct double-cosets
	\begin{equation*}
		B(\A_\q)^\times=\coprod_{i=1}^{h} B^\times x_i R(\A_\q)^\times
	\end{equation*}
	where $h=h(R)$ is the class number of $R$. Since $B$ is definite, the analogous decomposition holds for $\widehat{B}^\times$, namely $\widehat{B}^\times=\coprod_{i=1}^{h}B^\times \widehat{x_i} \widehat{R}^\times$, with $\widehat{x_i}=(x_{i,q})_{q<\infty}$.
	We recall that the above double quotient can be identified with the class group of $R$, that is classes of left $R$-ideals.
	\begin{lemma}[\cite{HPS1989orders}, Lemma 7.4]\label{HPS1989orders Lemma 7.4}
		Let $R$ be an Eichler order of level $N$. Then there exists a set of ideal class representatives $\{I_1,\ldots,I_h\}$ for the left $R$-ideal classes, such that $I_i\otimes\z_q=R_q$ for all $q$ dividing the level.
	\end{lemma}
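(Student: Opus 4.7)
The approach is to apply weak approximation for $B^\times$ to modify any given representative of a class until it has the desired local shape at primes dividing $N$. Recall the adelic description of left $R$-ideal classes: each $\widehat{g}=(g_q)_q \in \widehat{B}^\times$ determines an ideal $I(\widehat{g}) := B\cap \widehat{R}\widehat{g}$ with local components $I(\widehat{g})_q = R_q g_q$, and the left-$R$-ideal class of $I(\widehat{g})$ is unchanged under replacing $\widehat{g}$ by $\widehat{u}\,\widehat{g}\,b$ for any $\widehat{u}\in\widehat{R}^\times$ and $b\in B^\times$. In particular, the substitution $\widehat{g}\mapsto \widehat{g}\,b$ with $b\in B^\times$ corresponds to switching the representative $I$ to $Ib$ inside the same class.

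Given any initial representative $I^{(0)}$ of a fixed class with local description $I^{(0)}_q = R_q g_q^{(0)}$, the task reduces to finding $b\in B^\times$ such that $g_q^{(0)}b \in R_q^\times$ for every $q\mid N$, because then $I^{(0)}b$ has $q$-component equal to $R_q$ at every such prime while still being a left $R$-ideal and still representing the same class. Since $N$ is coprime to the discriminant $D$, at each $q\mid N$ we have $B_q\cong M_2(\q_q)$ via $\iota_q$, and $R_q^\times=\iota_q^{-1}(\Gamma_0(N\z_q)^\times)$ is an open neighborhood of $1$ in $B_q^\times$. Weak approximation for $B^\times$ as an open subvariety of the $\q$-vector space $B$ yields density of $B^\times$ in $\prod_{q\mid N} B_q^\times$, so one may choose $b$ simultaneously close to each $(g_q^{(0)})^{-1}$; if the approximation is tight enough, $g_q^{(0)}b$ falls inside $R_q^\times$ for every $q\mid N$ at once.

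Iterating the construction over a set of representatives of the $h$ ideal classes produces the collection $\{I_1,\ldots,I_h\}$ claimed. The main delicacy is the weak-approximation step: strong approximation fails for the definite quaternion algebra $B$, so one cannot hope to prescribe $g_q$ outside of units at the relevant primes using a single global element. However, we are only imposing open conditions at the finitely many primes dividing $N$, so the weak form — which holds for $B^\times$ over $\q$ — is sufficient, and no condition need be placed at the remaining finite primes or at the archimedean place. Once this is in hand, the verification that $I^{(0)}b$ remains a bona fide left $R$-ideal and lies in the original class is formal from the adelic dictionary recalled above.
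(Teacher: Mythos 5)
The paper itself offers no proof of this statement; it is quoted as Lemma 7.4 of \cite{HPS1989orders}. Your argument is correct and is essentially the standard proof of that cited result: describe the class adelically, then use weak approximation to produce $b\in B^\times$ lying in the open set $\prod_{q\mid N}\bigl(g_q^{(0)}\bigr)^{-1}R_q^\times$, so that $I^{(0)}b$ represents the same class and has $q$-component $R_q$ at every $q\mid N$. The only point a reader might want spelled out is why the approximant $b$ is genuinely a unit of $B$: its reduced norm is $q$-adically close to the nonzero $n\bigl((g_q^{(0)})^{-1}\bigr)$ at any $q\mid N$, hence nonzero, which disposes of both the division-algebra and the split case at once.
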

	\noindent By the above Lemma \ref{HPS1989orders Lemma 7.4}, the representatives $x_i=(x_{i,q})_{q}\in B(\A_\q)^\times$ can be taken to lie in $R(\A_\q)^\times$, in particular $x_{i,q}\in R_q^\times$ for each prime $q|N$. Fixing the representatives in this fashion, we readily notice that the lift of the characters in Section \ref{lift character} makes sense. Furthermore, the double-coset decomposition also allows for a more explicit expression of quaternionic modular forms. By the definition of a quaternionic modular forms, a quaternionic modular form $\varphi$ is uniquely determined by its values on the representatives $x_i$. More precisely, for $i=1,\ldots, h$, let $\widetilde{\Gamma}_{x_i}= B^\times \cap x_i^{-1}R^\times x_i$ and define
	\begin{equation*}
		\cc_{\widetilde{\chi},i}=\left\{c\in\cc\mid \widetilde{\chi}(\gamma)\cdot c=c,\textrm{ for each }\gamma\in \widetilde{\Gamma}_{x_i}\right\}.
	\end{equation*}
	As thoroughly explained in Chapter 5 of \cite{HPS1989basis}, the above observations yield the identification
	\begin{equation}\label{quat mod forms as values}
		S_2(R, \widetilde{\chi})\cong \bigoplus_{i=1}^{h}\cc_{\widetilde{\chi},i}
	\end{equation}
	given by $\varphi\longmapsto (\varphi(x_1),\ldots,\varphi(x_h))$.
	\begin{remark}\label{inclusion and stabilizers}
		\begin{enumerate}[label=(\alph*)]
			\item It is possible to fix compatible choices of the lifting characters $\widetilde{\chi}$ such that the liftings are compatible under the inclusion of orders.
			\item For level $N$ big enough, the groups $\widetilde{\Gamma}_{x_i}$ have cardinality $2$, \emph{i.e.} $\widetilde{\Gamma}_{x_i}=\{\pm 1 \}$.
		\end{enumerate}
	\end{remark}
	
	\subsubsection{Brandt Matrices}\label{Brandt}
	
	We begin this section by denoting the reduced norm of $b\in B$ by $n(b)\in\q$ and, by abuse of notation, we will keep the notation for the reduced norm of $\widehat{B}$. Let $\{I_1=R,\ldots,I_h\}$ be a set of representatives for the classes of left $R$-ideals and $R_i$, for $i=1,\ldots,h$, the right order corresponding to $I_i$. For $1\leq i,j \leq h$, we set $M_{i,j}=I_j^{-1}I_i$ which is a $R_j$-left ideal with right order $R_i$. We set $n(M_{i,j})$ to be the unique positive rational number such that $n(b)/n(M_{i,j})$ are all integers with no common factor, for $b$ varying in $M_{i,j}$. Take $\chi$ and $\widetilde{\chi}$ as in the above sections.
	\begin{definition}
		One defines the \emph{$m$-th Brandt matrix associated with the order $R$ and with character $\chi$}, as the matrix
		\begin{equation*}
			B(m,R,\chi)=\left(b_{i,j}(m,R,\chi)\right)_{1\leq i,j\leq h}, 
		\end{equation*}
		where
		\begin{equation*}
			b_{i,j}(m,R,\chi)=\frac{1}{2}\sum_{\substack{b\in M_{i,j}:\\ n(b)=m\cdot 	n(M_{i,j})}}\widetilde{\chi}(b)w_j^{-1}\ph{000}\text{and}\ph{000}b_{i,j}(0,R,\chi)=\begin{cases}
				\frac{1}{2w_j} & \text{if $\chi$ is trivial}\\
				0	& \text{otherwise}
			\end{cases}
		\end{equation*}
		and $w_i=\# (R_i^\times/\{\pm1\})$.
	\end{definition}
	\noindent By Lemma \ref{HPS1989orders Lemma 7.4}, the evaluation of a Dirichlet character at $b\in M_{i,j}$ does not present any ambiguity; the lemma guarantees that, up to a suitable choice of the representatives $I_i$, we can define the Brandt matrices as above.
	\begin{proposition}[\cite{HPS1989basis}, Theorem 4.8]
		With the above notation and for any couple of fixed indexes $(i,j)$, the expansion
		\begin{equation*}
			\theta_{i,j}(z)=\sum_{m=0}^{\infty}b_{i,j}(m,R,\chi)e^{2\pi i mz}
		\end{equation*}
		defines a modular forms in $M_2(\Gamma_0(ND),\chi,\cc)$. Moreover, the form is cuspidal if $\chi_q$ is non trivial for some $q|N$.
	\end{proposition}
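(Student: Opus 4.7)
The plan is to identify $\theta_{i,j}(z)$ as a twisted theta series attached to the quadratic lattice $M_{i,j}$ (rank $4$ over $\z$) equipped with the rescaled reduced norm, and then invoke the classical modularity results of Schoeneberg and Shimura for such theta series. After regrouping by norm, the expansion reads
\begin{equation*}
\theta_{i,j}(z) \;=\; \frac{1}{2 w_j} \sum_{b \in M_{i,j}} \widetilde{\chi}(b) \, e^{2\pi i (n(b)/n(M_{i,j})) z},
\end{equation*}
which is well-defined because Lemma \ref{HPS1989orders Lemma 7.4} allows the representatives $I_i$ to be chosen so that $M_{i,j}\otimes\z_q = R_q$ at every prime dividing the level; hence $\widetilde{\chi}(b)$ makes sense for all $b \in M_{i,j}$, and the definiteness of $B$ ensures that the series converges absolutely on the upper half-plane.

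The first substantive step is to check that the reduced norm on $M_{i,j}$, rescaled by $1/n(M_{i,j})$, defines a positive definite integral quadratic form on a rank-$4$ lattice, with content one built into the definition of $n(M_{i,j})$. The second step is to apply the classical theta-series correspondence: twisted theta series attached to positive definite integral quadratic forms of even rank $2k$ yield modular forms of weight $k$, with level determined by the level of the dual lattice together with the conductor of the character, and Nebentypus determined jointly by $\chi$ and the discriminant of the form. Here $k=2$, so one obtains a weight-$2$ form, and a local computation at each prime dividing $ND$ is needed to match the level with $\Gamma_0(ND)$ and the character with $\chi$: at primes $q \mid D$ the maximality of $R_q$ contributes to the level via the local discriminant, while at primes $q \mid N$ the Eichler structure of $R_q$ together with the local definition of $\widetilde{\chi}_q$ produces the correct Nebentypus component.

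For cuspidality, I would analyze the constant terms of $\theta_{i,j}$ at each cusp of $\Gamma_0(ND)$. At the cusp at infinity the constant term is $b_{i,j}(0,R,\chi)$, which vanishes by definition when $\chi$ is non-trivial. At a cusp with denominator $M \mid ND$, conjugating by an Atkin--Lehner-type representative transforms $\theta_{i,j}$ into another twisted theta series, whose constant term takes the shape of a character sum over a coset of a local unit group; provided $\chi_q$ is non-trivial for some $q \mid N$, orthogonality of characters forces this sum to vanish. The main obstacle in executing this plan is the careful local bookkeeping of normalizations at primes dividing $D$ versus those dividing $N$, so that the global level $ND$ and Nebentypus $\chi$ come out exactly as stated; once that local-global dictionary is in place, both modularity and cuspidality follow from standard theta-series arguments.
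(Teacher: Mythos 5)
This proposition is stated in the paper purely as a recalled result, quoted verbatim from \cite{HPS1989basis} (Theorem 4.8); the paper itself offers no proof, so there is nothing internal to compare your argument against. Your outline is the standard one underlying the cited theorem: rewrite $\theta_{i,j}$ as a twisted theta series of the rank-$4$ positive definite lattice $M_{i,j}$ with the rescaled reduced norm, invoke the classical modularity of quaternary theta series with character to get weight $2$, level $ND$, and Nebentypus $\chi$, and detect cuspidality from the vanishing of constant terms at the cusps via orthogonality of characters once some $\chi_q$ with $q\mid N$ is non-trivial. This is essentially the route taken in \cite{HPS1989basis}, and your use of Lemma \ref{HPS1989orders Lemma 7.4} to make $\widetilde{\chi}(b)$ well-defined on $M_{i,j}$ is exactly the point the paper itself flags after the definition of the Brandt matrices. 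The one caveat is that your write-up is a plan rather than a proof: the ``local bookkeeping'' you defer (matching the level at primes dividing $D$ versus $N$, and the precise transformation law at cusps of denominator $M\mid ND$) is where all the actual work of the cited theorem lives, so as it stands the proposal identifies the correct strategy without discharging it.
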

	
	\subsubsection{Explicit computation of quaternionic modular forms}\label{brandt matrices and explicit quaternionic modular forms}
	
	We end this section by recalling how one can explicitly compute quaternionic modular forms (of weight 2) on definite quaternion algebras. The procedure and algorithms can be traced back to \cite{Eichler73} and \cite{Pizer1980}. To explain them, we need to recall two results.
	\begin{proposition}[Proposition 2.22, \cite{Pizer1980}, Theorems 2 \& 4, \cite{Eichler73}]
	    As above, let $R$ be an Eichler order of level $N$ and $\chi$ a Dirichlet character. The Brandt matrices $B(m,R,\chi)$, for $(m,N)=1$, form a commutative semisimple ring. Therefore, they can be simultaneously diagonalized by a matrix $C(R,\chi)$.
	\end{proposition}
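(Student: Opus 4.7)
The plan is to establish the three assertions — commutativity, semisimplicity, and simultaneous diagonalizability — in that order. All three ultimately rest on interpreting $B(m,R,\chi)$ as the matrix of a Hecke-type operator $T_m$ acting on the finite-dimensional space $S_2(R,\widetilde{\chi})\cong\bigoplus_{i=1}^{h}\cc_{\widetilde{\chi},i}$ via the evaluation isomorphism \eqref{quat mod forms as values}.

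First I would address commutativity. By inspecting the definition of $b_{i,j}(m,R,\chi)$, the $(i,j)$-entry counts, weighted by $w_j^{-1}$ and the character value, elements $b\in M_{i,j}=I_j^{-1}I_i$ of reduced norm $m\cdot n(M_{i,j})$, which is equivalent to enumerating left $R_j$-ideals inside $M_{i,j}$ of reduced norm $m$. Hence $B(m,R,\chi)$ is exactly the matrix of the double-coset operator $T_m=\sum_{n(b)=m}\widetilde{\chi}(b)[b]$ in the basis of indicator functions dual to evaluation at the representatives $x_i$. Standard Hecke algebra manipulations — multiplying double cosets modulo $R(\A_\q)^\times$ and reading off the decomposition — yield $T_m T_n = T_{mn}$ for $\gcd(m,n)=1$ together with the expected three-term recursion at primes $q\nmid N$; these relations imply the commutativity.

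For semisimplicity, I would equip the right-hand side of \eqref{quat mod forms as values} with the Hermitian pairing
\begin{equation*}
\langle \varphi,\psi\rangle = \sum_{i=1}^{h} w_i^{-1}\,\varphi(x_i)\,\overline{\psi(x_i)}.
\end{equation*}
A direct computation using $M_{j,i}=n(M_{i,j})^{-1}\cdot\overline{M_{i,j}}$, the multiplicativity of the reduced norm, and the relation $\widetilde{\chi^{-1}}(b)=\overline{\widetilde{\chi}(b)}$ for $b\in R^\times_q$, shows that the adjoint of $B(m,R,\chi)$ with respect to this pairing is $B(m,R,\chi^{-1})$. Since both $B(m,R,\chi)$ and $B(m,R,\chi^{-1})$ lie in the commutative algebra produced in the previous step, $B(m,R,\chi)$ is a normal operator, hence diagonalizable. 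The classical linear algebra fact that a commuting family of diagonalizable endomorphisms of a finite-dimensional vector space admits a common eigenbasis then produces the change-of-basis matrix $C(R,\chi)$.

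The main obstacle will be the adjoint computation in the semisimplicity step: the bookkeeping that turns the reduced-norm normalization $n(M_{i,j})$, the local character lift $\widetilde{\chi}$ from Section \ref{lift character}, and the involution $I_i\mapsto I_i^{-1}$ into the clean identity $B(m,R,\chi)^{\ast}=B(m,R,\chi^{-1})$ depends essentially on the compatibility of representatives provided by Lemma \ref{HPS1989orders Lemma 7.4}, and on the hypothesis $(m,N)=1$ which guarantees that every $b$ contributing to the sum lies in $R_q^{\times}$ at primes dividing the level, so that the character value $\widetilde{\chi}(b)$ is well defined and behaves unitarily.
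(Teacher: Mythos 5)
This proposition is quoted in the paper from Pizer (Proposition 2.22) and Eichler (Theorems 2 and 4); the paper supplies no proof of its own, so there is nothing in-text to compare against. Your reconstruction follows the standard route — Brandt matrices as Hecke operators acting on $S_2(R,\widetilde{\chi})$ for commutativity, a weighted Hermitian pairing for semisimplicity, and common eigenbases for commuting diagonalizable operators — and the first and third steps are sound as sketched.

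The gap is in the normality argument. You assert that the adjoint of $B(m,R,\chi)$ with respect to the pairing $\sum_i w_i^{-1}\varphi(x_i)\overline{\psi(x_i)}$ is $B(m,R,\chi^{-1})$, and then conclude normality because ``both lie in the commutative algebra produced in the previous step.'' That algebra is generated by the matrices $B(n,R,\chi)$ for the \emph{fixed} character $\chi$; the matrix $B(m,R,\chi^{-1})$ --- which is naturally an operator on the different space $S_2(R,\widetilde{\chi^{-1}})$ and is entrywise equal to $\overline{B(m,R,\chi)}$ --- is not a member of it, so commutation of $B(m,R,\chi)$ with its adjoint does not follow from what you wrote. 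The identity you actually need is sharper and dissolves the problem: quaternion conjugation $b\mapsto\bar{b}$ carries the elements of $M_{i,j}$ of relative norm $m$ bijectively onto those of $M_{j,i}$ of relative norm $m$, and since locally $\bar{b}=\mat{d}{-b}{-c}{a}$ with $ad\equiv n(b)\pmod{q^{v_q(N)}}$ on $\Gamma_0(N\z_q)$, the character value transforms as $\widetilde{\chi}(\bar{b})=\chi\bigl(n(b)/n(M_{i,j})\bigr)\cdot\overline{\widetilde{\chi}(b)}$. Summing over $b$ yields the symmetry $w_j\,b_{i,j}(m,R,\chi)=\chi(m)\,\overline{w_i\,b_{j,i}(m,R,\chi)}$, whence $B(m,R,\chi)^{\ast}=\overline{\chi(m)}\,B(m,R,\chi)$ with respect to your pairing --- a unimodular scalar multiple of the operator itself, exactly as in the classical relation $\langle T_m f,g\rangle=\chi(m)\langle f,T_m g\rangle$ for Hecke operators with nebentypus. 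Normality of each $B(m,R,\chi)$ is then immediate, and the rest of your argument goes through; the hypothesis $(m,N)=1$ enters precisely where you say it does, to keep the contributing $b$ in $R_q^{\times}$ at primes dividing the level.
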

	\begin{remark}[Remark 2.27, \cite{Pizer1980}]
	    The $h$ entries of the diagonal matrix
	    \begin{equation*}
	        D(R,\chi,z)=\sum_{m=1}^{+\infty}C(R,\chi) B(m,R,\chi) C(R,\chi)^{-1} e^{2\pi i m z},
	    \end{equation*}
	    comprehend of $h-1$ eigenforms in $S_2(\Gamma_1(ND))^{D-new}$. The quaternionic modular newforms associated via Jacquet--Langlands with these classical forms appear as columns of the $h\times h$ matrix $C(R,\chi)^{-1}$. Each entry of a chosen column represents the value of the form on a representative as in Equation \ref{quat mod forms as values}. The remaining column corresponds to the Eisenstein form in the remaining entry on the diagonal of $D(R,\chi,z)$. Further details can be found in Section 5 of \cite{HPS1989basis}.
	\end{remark}
	\begin{lemma}[Sturm bound, Corollary 9.20, \cite{stein2007}]
	    With the above notation, let $f\in S_k(\Gamma_1(ND),\chi)^{new}$ be a normalized eigenform. Then, $f$ is uniquely determined by its first $r$ Fourier coefficients, for
	    \begin{equation*}
	        r= \left\lfloor \frac{k [SL_2(\z):\Gamma_0(ND)]}{12}\right\rfloor=\left\lfloor \frac{kND}{12}\prod_{q|ND}\left(1+\frac{1}{q}\right)\right\rfloor.
	    \end{equation*}
	    The number $r$ is called the \emph{Sturm bound} of $S_k(\Gamma_1(ND),\chi)$.
	\end{lemma}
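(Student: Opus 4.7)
The plan is to reduce the uniqueness claim to a vanishing statement and then invoke the classical valence formula on the upper half-plane. If $f_1, f_2 \in S_k(\Gamma_1(ND), \chi)^{new}$ are two normalized eigenforms sharing their first $r$ Fourier coefficients, then $g := f_1 - f_2$ lies in $S_k(\Gamma_1(ND), \chi)$ and has order of vanishing $v_\infty(g) \geq r+1$ at the cusp at infinity. So it suffices to show that every nonzero $g \in S_k(\Gamma_1(ND), \chi)$ satisfies $v_\infty(g) \leq k \cdot [SL_2(\z):\Gamma_0(ND)]/12$, which is precisely the estimate that rules out $r+1$ initial vanishing coefficients.

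For this vanishing bound, the natural approach is the norm trick: form $G(\tau) := \prod_\gamma (g|_k \gamma)(\tau)$, where $\gamma$ runs over a set of right coset representatives of $\Gamma_0(ND) \backslash SL_2(\z)$. The $\chi$-equivariance of $g$ under $\Gamma_0(ND)$, together with the fact that the product ranges over all cosets, ensures that $G$ transforms as a weight-$k[SL_2(\z):\Gamma_0(ND)]$ modular form on $SL_2(\z)$ up to a global character $SL_2(\z) \to \cc^\times$; since the abelianization of $SL_2(\z)$ is finite (isomorphic to $\z/12\z$), a suitable power $G^M$ is a genuine holomorphic form on $SL_2(\z)$. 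Applying the standard bound $v_\infty(h) \leq k'/12$ for a nonzero $h \in M_{k'}(SL_2(\z))$, and observing that the vanishing of $g$ at $\infty$ contributes to the vanishing of $G$ via the identity coset representative, one deduces $v_\infty(g) \leq k \cdot [SL_2(\z):\Gamma_0(ND)]/12$, which concludes the proof after taking integer parts.

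The main obstacle in this approach is the careful bookkeeping of the character: a direct norm argument over $\Gamma_1(ND) \backslash SL_2(\z)$ cosets would only produce the weaker bound involving $[SL_2(\z):\Gamma_1(ND)]$, which is too large by a factor of roughly $\phi(ND)$. The gain comes precisely from the nebentypus transformation law with respect to $\Gamma_0(ND)$, allowing descent to the smaller index. A more conceptual alternative is to interpret $g$ as a global section on the compactified modular curve $X_0(ND)$ of the line bundle $\omega^{\otimes k} \otimes \mathcal{L}_\chi$, where $\omega$ is the Hodge bundle and $\mathcal{L}_\chi$ is the torsion bundle attached to the character $\chi$ of the Galois cover $X_1(ND) \to X_0(ND)$; the estimate then follows from the degree of this bundle, which equals $k \cdot [SL_2(\z):\Gamma_0(ND)]/12$ up to standard corrections from elliptic and cuspidal points.
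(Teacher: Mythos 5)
Your argument is correct: reducing to the vanishing statement $v_\infty(g)\leq k\,[SL_2(\z):\Gamma_0(ND)]/12$ and proving it by taking the norm over $\Gamma_0(ND)\backslash SL_2(\z)$, absorbing the resulting character of $SL_2(\z)$ by passing to a power (the abelianization being $\z/12\z$), and applying the valence formula on $SL_2(\z)$ is exactly the standard proof of the Sturm bound. The paper itself gives no proof here --- the lemma is quoted verbatim from Corollary 9.20 of the cited reference of Stein --- and your argument is essentially the one found there, including the correct observation that the nebentypus transformation law under $\Gamma_0(ND)$ (rather than $\Gamma_1(ND)$) is what yields the smaller index in the bound.
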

	\begin{example}
	    The Sturm bound of $S_2(\Gamma_0(15))$ is $\left\lfloor \frac{30}{12} \left(1+\frac{1}{3}\right)\left(1+\frac{1}{5}\right)\right\rfloor=4$.
	\end{example}
	
	Combining the above observations we deduce that a finite number of Brandt matrices are sufficient to determine the space of cuspidal quaternionic modular forms. We report here (see \cite{Pizer1980}) the routine for computing a quaternionic modular form associated with a normalized eigenform $f\in S_2(\Gamma_1(ND),\chi)^{new}$.
	\begin{routine}\label{alg routine quaternionic mod forms}\phantom{0}
    	\begin{enumerate}[label ={\emph{\textbf{Step \arabic*:}}},align=left]
    	    \item Compute the Sturm bound $r$ of $S_2(\Gamma_1(ND)$ and the eigenvalues with index lesser equal than this number.
    	    \item Construct an Eichler order $R$ of level $N$ (see the following Section \ref{basis for orders}).
    	    \item Compute Brandt matrices $B(m,R,\chi)$ for $m\leq r$ and find a matrix $C(R\chi)$ which simultaneously diagonalize them (using Pizer's algorithms in \cite{Pizer1980}).
    	    \item Express $f$ as a linear combination of the basis of eigenforms appearing on the diagonal of $D(R,\chi,z)$ (again, it is enough to deal with the first $r$ coefficients).
    	    \item The corresponding combination of the columns of $C(R,\chi)^{-1}$ represents the values of the sought-for quaternionic modular form.
    	\end{enumerate}
	\end{routine}

\section{Quaternionic double quotients and geodesics on the Bruhat--Tits tree}\label{Quaternionic double quotients and geodesics on the Bruhat--Tits tree}
	
	In this section we describe how to explicitly compute a set of representatives for the double quotient of the quaternion algebra $B$. We show how the procedure presented in \cite{FrancMasdeu2014} can be extended and applied to our case of interest.
	
	\subsection{Quaternionic $p$-adic double-coset spaces}\label{p-adic double-coset spaces}
	Let $p$ and $\ell$ be two distinct odd primes and take $B$ to be the (unique up to isomorphism) quaternion algebra over $\q$ ramified exactly at $\ell$ and $\infty$. For $n\geq 0$, consider $R^n$ to be an Eichler order of level $Np^n$, with $N$ prime to both $p$ and $\ell$. We fix, as above,  isomorphisms $\iota_q:B_q=B\otimes \q_q\cong M_2(\q_q)$ for each prime $q\neq \ell$, such that $\iota_q:R_q =R\otimes \z_q\cong \Gamma_0(N p^n\z_q)$ and hence $\iota_p(R)=\Gamma_0(p^n\zp)$.
	
	We begin by recalling the classical application of the \emph{Strong Approximation Theorem} (see \cite{Vigneras1980}, Theoreme fondamental 1.4, b)).
	\begin{lemma}[$p$-adic double quotient]\label{p-adic model}
		Let $\Sigma=\prod'\Sigma_q$ be $\widehat{R^n}^\times$ and let $R$ be a maximal order containing $R^n$. The embedding of $GL_2(\qp)\overset{\iota_p}{\cong} B_p^\times\hookrightarrow \widehat{B}^\times$ as $b_p\longmapsto (1,\ldots,1,b_p,1,1\ldots)$ induces the bijection
		\begin{equation*}
			\left({R}\left[1/p\right]^\times\cap \prod^{}_{q\neq p}\Sigma_q\right)\backslash B_p^\times / \Sigma_p \cong B^\times\backslash \widehat{B}^\times / \Sigma.
		\end{equation*}
		Furthermore, we have
		\begin{equation*}
			B^\times\backslash \widehat{B}^\times / \Sigma \cong  \iota_p\left(R^n\left[1/p\right]^\times\right)\backslash GL_2(\qp) / \Gamma_0(p^n\zp)\cong  \Gamma_n\backslash GL_2(\qp) / \qp^\times \Gamma_0(p^n\zp),
		\end{equation*}
		where $\Gamma_n$ is the image of $\iota_p\left(R^n\left[1/p\right]^\times\right)$ in $PGL_2(\qp)$.
	\end{lemma}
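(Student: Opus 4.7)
The three isomorphisms build on each other, and the whole statement is essentially the Strong Approximation Theorem (in the form of Vignéras, Théorème fondamental 1.4) combined with the standard transport via $\iota_p$ and the central quotient. My plan is to prove the first bijection carefully and reduce the remaining two to bookkeeping. Let $\Phi: B_p^\times \to B^\times\backslash\widehat{B}^\times/\Sigma$ be the natural map sending $b_p$ to the class of the adele $\tilde b_p := (1,\ldots,1,b_p,1,\ldots)$ with trivial components outside $p$.

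For injectivity, I would compute the fiber of $\Phi$ by comparing components. If $\gamma\cdot\tilde b_p\cdot\sigma=\tilde b_p'$ with $\gamma\in B^\times$, $\sigma\in\Sigma$, then the equation at every $q\neq p$ reads $\gamma_q=\sigma_q^{-1}\in\Sigma_q=(R^n_q)^\times$. Hence $\gamma$ is a global element whose localisations away from $p$ lie in $(R^n_q)^\times$, which is exactly the condition $\gamma\in R^n[1/p]^\times$. A short check shows $R^n[1/p]^\times = R[1/p]^\times\cap\prod_{q\neq p}\Sigma_q$, since the inclusion $(R^n_q)^\times\subseteq R_q^\times$ makes the stronger local condition automatically imply $R[1/p]^\times$-membership. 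The $p$-component then reads $b_p'=\gamma_p b_p\sigma_p$, so the fiber over $[\tilde b_p]$ is precisely the $(R^n[1/p]^\times,\Sigma_p)$-double coset of $b_p$, and the map descends to an injection on the prescribed double quotient.

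For surjectivity, which is the heart of the argument, I would first pass to reduced norm $1$. Using $\mathrm{nr}(\Sigma)=\widehat{\z}^\times$, $\mathrm{nr}(B^\times)=\q^\times_{>0}$, and the identity $\widehat{\q}^\times=\q^\times_{>0}\cdot\widehat{\z}^\times$ (class number one of $\q$), there exist $\gamma_0\in B^\times$ and $\sigma_0\in\Sigma$ with $\gamma_0^{-1}\tilde g\sigma_0\in\widehat{B}^1$ for any given $\tilde g\in\widehat{B}^\times$. Strong Approximation for $B^1$ — applicable precisely because $B_p^1=SL_2(\qp)$ is the required non-compact split factor at $p$ — then yields $\gamma_1\in B^1$ whose $q$-component lies in $\Sigma_q\cdot(\gamma_0^{-1}\tilde g\sigma_0)^{-1}_q$ for every $q\neq p$. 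A final right-multiplication by an element of $\Sigma$ concentrated off $p$ transforms $\gamma_1\gamma_0^{-1}\tilde g\sigma_0$ into the prescribed shape $\tilde b_p$, as needed.

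For the remaining two identifications, I would transport the $p$-local quotient through $\iota_p$, which maps $B_p^\times$ isomorphically onto $GL_2(\qp)$ and sends $\Sigma_p=(R^n_p)^\times$ to $\Gamma_0(p^n\zp)$. To produce the last form, one absorbs the central $\qp^\times$ on the right: the decomposition $\qp^\times=p^{\z}\cdot\zp^\times$ places $p^{\z}\subseteq\iota_p(\z[1/p]^\times)\subseteq\iota_p(R^n[1/p]^\times)$ and $\zp^\times\subseteq\Gamma_0(p^n\zp)$, so enlarging $\Gamma_0(p^n\zp)$ to $\qp^\times\Gamma_0(p^n\zp)$ and replacing $\iota_p(R^n[1/p]^\times)$ by its image $\Gamma_n$ in $PGL_2(\qp)$ is a bijection of double cosets. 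The main obstacle is the Strong Approximation step: since $B$ is definite, Strong Approximation fails for $B^\times$ itself, and the norm-first, approximation-second reduction to $B^1$ is essential — the whole argument rests on the existence of the split non-compact place $p$.
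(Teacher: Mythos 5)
Your proof is correct and follows essentially the same route as the paper's (much terser) argument: Strong Approximation at the split place $p$ for surjectivity, the identification $R[1/p]^\times\cap\prod_{q\neq p}\Sigma_q=R^n[1/p]^\times$ for the fibers, and the absorption of the center via $p\in R^n[1/p]^\times$ and $\zp^\times\subset\Gamma_0(p^n\zp)$. The only difference is that you spell out the norm-reduction to $\widehat{B}^1$ and the component-wise fiber computation explicitly, which the paper leaves implicit.
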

	\begin{proof}
		Since $p$ is a split place in $B$, the \emph{Strong Approximation Theorem}\commenttt{tells us that $B^1 B_p^1$ is dense in $\widehat{B}^1$, for $(-)^1$ the subset of norm one elements. Since the reduced norm map is surjective away from the archimedean place, we have that $\widehat{B}^\times = \overline{(B^1 B_p^1 \A_{\q,f}^\times )}$.} implies that for any $\Sigma$ compact open in $\widehat{B}^\times$, $\widehat{B}^\times = B^1 B_p^1 \A_{\q,f}^\times \Sigma = B^\times B_p^\times \Sigma$.\commenttt{ guarantees the decomposition $\widehat{B}^\times = B^1 B_p^1 \A_{\q,f}^\times \Sigma = B^\times B_p^\times \Sigma$, as $\A_{\q,f}^\times\subset B^\times B_p^\times \Sigma$. It remains to prove that $B^\times\cap B_p^\times \Sigma = {R}\left[1/p\right]\cap \prod^{}_{q\neq p}\Sigma_q$ and that $B^\times B_p^\times\cap \Sigma=\Sigma_p$. Taking in account that $B^\times$ embeds diagonally in $\widehat{B}^\times$ and that $B_p^\times\hookrightarrow \widehat{B}^\times$ as $b_p\longmapsto (1,\ldots,1,b_p,1,1\ldots)$, the two intersection are easily proved. The isomorphism theorem proves the first statement.\\}
		By proving the double inclusions we have ${R} \left[1/p\right]^{\times} \cap \prod_{q\neq p} {(R^n_q)}^\times = R^n\left[1/p\right]^{\times}$. In the end, $1/p$ belongs to ${R^n[1/p]}^\times$ and $\zp^\times\subset {R^n_p}^\times$.
		\commenttt{The element $\frac{1}{p}$ belongs to ${R}[1/p]$ and to each $R^n_q$ for $q\neq p$. By definition $\zp\subset R^n_p$, hence $\qp^\times$ is contained in $\widehat{R^n}\cap \prod^{}_{q\neq p}{R}_q$. The center of $B_p^\times$ is exactly $\qp^\times$ and the reduced norm is surjective (with $\mat{\zp^\times}{0}{0}{\zp^\times}\subset (R^n_p)^\times$, by definition of $(R^n_p)^\times$), thus
			\begin{equation*}
				\left({R}\left[1/p\right]^\times\cap \prod_{q\neq p} (R^n_q)^\times\right) \backslash B_p^\times / (R^n_p)^\times\cong \left({R}\left[1/p\right]^\times\cap \prod_{q\neq p} (R^n_q)^\times\right)^{1} \backslash B_p^\times / \qp^\times (R^n_p)^\times.
			\end{equation*}
			The last step is noticing that ${R} \left[1/p\right]^{\times} \cap \prod_{q\neq p} (R^n_q)^\times = R^n\left[1/p\right]^{\times}$.}
	\end{proof}
	\noindent The above lemma, together with the following proposition, shows that we can focus our attention on the right-quotient
	\begin{equation}\label{iota p iso}
		B_p^\times / \qp^\times 	R^\times_{p}\overset{\iota_p}{\cong} GL_2(\qp)/\qp^\times \Gamma_0(p^r\zp).
	\end{equation}
	\begin{proposition}\label{left inv order}
		For any $n\geq 1$, let $R^n$ be as above. Up to conjugation, we can assume that the orders $R_n$ are encapsulated. Then $R^n\left[1/p\right]=R^{n+1}\left[1/p\right]$.
	\end{proposition}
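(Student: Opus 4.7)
The plan is to proceed in two steps: first realize the encapsulation by a local-to-global conjugation argument, and then deduce the identity $R^n[1/p]=R^{n+1}[1/p]$ from the local-global description of $\z[1/p]$-orders in $B$.

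For the first step, I would record that the local data of $R^n$ reads as follows: the unique maximal order in $B_\ell$ at $q=\ell$; $\iota_q^{-1}(\Gamma_0(N\z_q))$ at each finite $q\nmid\ell p$; and $\iota_p^{-1}(\Gamma_0(p^n\zp))$ at $q=p$. The data at every prime $q\neq p$ is manifestly independent of $n$, while at $p$ we have the tautological local inclusions $\Gamma_0(p^{n+1}\zp)\subset\Gamma_0(p^n\zp)$. Since Eichler orders of a given level in $B$ are classified up to $B^\times$-conjugation by their adelic local data, I would invoke strong approximation to adjust the $R^n$ by a common sequence of $B^\times$-conjugations so that they assemble into a nested tower $\cdots\subset R^{n+1}\subset R^n\subset\cdots\subset R$ whose local completions at every $q\neq p$ coincide. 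This is precisely the encapsulated configuration asserted in the statement.

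The second step is then purely formal. Any $\z[1/p]$-order $\mathcal{O}\subset B$ is recovered from its $q$-adic completions at primes $q\neq p$ by
\[
\mathcal{O}=B\cap\prod_{q\neq p}\bigl(\mathcal{O}\otimes_{\z[1/p]}\z_q\bigr),
\]
since inverting $p$ only removes the $p$-adic constraint. Applied to $R^n[1/p]$ and $R^{n+1}[1/p]$, the relevant tuples of local completions at $q\neq p$ reduce to $(R^n_q)_{q\neq p}$ and $(R^{n+1}_q)_{q\neq p}$, and these agree by the output of Step 1. Hence $R^n[1/p]=R^{n+1}[1/p]$, as desired.

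The main obstacle is the first step: naively each order $R^n$ is only specified up to its own $B^\times$-conjugacy class, and aligning different levels requires combining prime-by-prime adjustments into a single global conjugating element of $B^\times$. This is handled by strong approximation for $B^\times$ away from $\infty$ (available because $B$ is split at $p$), but it must be carried out with some care in order to obtain a genuine nested chain rather than merely pairwise inclusions; once this is done, the second step follows immediately from the local nature of $\z[1/p]$-order.
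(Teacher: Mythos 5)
Your argument is correct, but it takes a genuinely different route from the paper. The paper's proof is explicit and basis-theoretic: it picks a $\z$-basis of $R^n$, invokes Pizer's description of index-$p$ sublattices via Hermite normal form matrices of determinant $p$, lists those matrices, and observes that they lie in $M_4(\z)\cap GL_4\left(\z\left[1/p\right]\right)$, so that the change of basis becomes invertible after inverting $p$. (In effect the paper shows the stronger statement that \emph{every} index-$p$ sublattice of $R^n$ has the same $\z[1/p]$-span, with no reference to local completions.) You instead use the local--global principle for $\z[1/p]$-lattices: since $R^{n+1}\subset R^n$ are Eichler orders of levels $Np^{n+1}$ and $Np^n$ whose completions agree at every $q\neq p$, and since a $\z[1/p]$-order is recovered as $B\cap\prod_{q\neq p}\mathcal{O}_q$, the two $\z[1/p]$-orders coincide. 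Your route is more conceptual and makes transparent why only the place $p$ matters; the paper's is more elementary and in keeping with its computational emphasis. One caveat on your Step 1: for a \emph{definite} quaternion algebra the type number of Eichler orders can exceed one, so a given pair $R^n$, $R^{n+1}$ of arbitrary representatives of their genera need not become nested after a global conjugation --- the obstruction lives in $B^\times\backslash\widehat{B}^\times/N(\widehat{R^{n+1}})$, and strong approximation relative to $\{p,\infty\}$ does not kill it. What is true, and what both you and the paper ultimately rely on, is that a nested tower with common completions away from $p$ can always be \emph{chosen} (fix the local data at $q\neq p$ once and shrink only at $p$); the paper makes this a standing convention in its Notation on Eichler orders rather than proving the conjugation claim, and your Step 2 is unaffected.
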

	\begin{proof}
		Let $f_1,\ldots, f_4$ be a $\z$-basis for $R^n$. As thoroughly explained in \cite{Pizer1980}, Section 5, any sublattice (in particular suborders) of index $p$ in $R^n$, has a basis $(g_1,\ldots,g_4)$ such that $(g_1,\ldots,g_4)=(f_1,\ldots,f_4)\cdot A$, where $A$ varies among the matrices in Hermite normal form in $M_4(\z)$ and with determinant $p$. Such matrices are of the form
		\begin{align*}
			\left(\begin{matrix}
				p & 0 & 0 & 0\\
				0 & 1 & 0 & 0\\
				0 & 0 & 1 & 0\\
				0 & 0 & 0 & 1
			\end{matrix}\right),\ph{000}
			\left(\begin{matrix}
				1 & 0 & 0 & 0\\
				a & p & 0 & 0\\
				0 & 0 & 1 & 0\\
				0 & 0 & 0 & 1
			\end{matrix}\right),\ph{000}
			\left(\begin{matrix}
				1 & 0 & 0 & 0\\
				0 & 1 & 0 & 0\\
				a & b & p & 0\\
				0 & 0 & 0 & 1
			\end{matrix}\right),\ph{000}
			\left(\begin{matrix}
				1 & 0 & 0 & 0\\
				0 & 1 & 0 & 0\\
				0 & 0 & 1 & 0\\
				a & b & c & p
			\end{matrix}\right),
		\end{align*}
		for $0\leq a,b,c<p$. These matrices belong to $M_4(\z)\cap GL_4\left(\z\left[1/p\right]\right)$, thus the claim follows.
	\end{proof}
	\begin{notation}
		We deduce that $\Gamma_n=\Gamma_{n+1}$ for each $n\geq 1$, hence we set $\Gamma=\Gamma_1$.
	\end{notation}
	
\subsection{Classes of representatives for the finite-length geodesics}\label{classes of representatives}
	We recall that, for any prime $q$, we denote the $q$-adic valuation by $v_q$.
	\begin{lemma}\label{coset matrices}
		For $r\geq 1$, the quotient $GL_2(\qp)/\qp^\times \Gamma_0(p^r\zp)$ admits a set of right-coset representatives, which we denote by $\left\{e_i\right\}$, consisting of matrices with coefficients in $\z$ if $r=1$, and coefficients in $\zp$ if $r\geq 2$. Moreover, there exists an effective algorithm that, given a matrix $g$ in $GL_2(\qp)$ returns a scalar $\lambda\in\qp^\times$ and a matrix $t$ in $\Gamma_0(p^r\zp)$, such that $\lambda\cdot g \cdot t= e_i$.
	\end{lemma}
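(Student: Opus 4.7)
The plan is to exploit the identification $GL_2(\qp)/\qp^\times\Gamma_0(p^r\zp)\cong Geod_r(\mathcal{T})$ from Section \ref{geodesics} and reduce the task to an explicit column-reduction procedure. For $t=\mat{a}{b}{c}{d}\in\Gamma_0(p^r\zp)$, with $a,d\in\zp^\times$, $b\in\zp$ and $c\in p^r\zp$, right-multiplication of $g\in GL_2(\qp)$ by $t$ replaces the columns $(g_1,g_2)$ of $g$ by $(ag_1+cg_2,\,bg_1+dg_2)$. The atomic moves at our disposal are therefore: (i) adding an arbitrary $\zp$-multiple of $g_1$ to $g_2$; (ii) adding an arbitrary $p^r\zp$-multiple of $g_2$ to $g_1$; (iii) rescaling each column independently by a $\zp^\times$-unit. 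Together with left scaling by $\qp^\times$, these generate all the ambiguity modulo which we must normalize $g$.

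The algorithm has two steps. First I would set $\lambda=p^{-m}$, where $m$ is the minimum $p$-adic valuation among the four entries of $g$, so that $\lambda g\in M_2(\zp)$ has at least one entry in $\zp^\times$. Second, I would run the column reduction, with a short case analysis according to which entries of $\lambda g$ are units: one picks as pivot the column whose top entry has the smallest valuation, rescales it to be monic via (iii), clears the other column as much as possible via (i)--(iii), and finally handles the residual ambiguity parametrized by $p\zp/p^r\zp$, for which the obvious lift $\{0,1,\dots,p^{r-1}-1\}$ provides integer coefficients. This reproduces the procedure of \cite{FrancMasdeu2014} for $r=1$ and extends it to $r\geq 2$ by iterating the finer reduction modulo $p^r$. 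The output is a matrix in $M_2(\zp)\cap GL_2(\qp)$, which establishes the $\zp$-coefficient claim.

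To upgrade to $\z$-entries when $r=1$, I would use that $GL_2(\zp)/\Gamma_0(p\zp)\cong\mathbb{P}^1(\f_p)$ admits an integer system of representatives (for instance $\mat{1}{0}{0}{1}$ together with $\mat{j}{1}{1}{0}$ for $j=0,\dots,p-1$), while any vertex of $\mathcal{T}$ is represented by an Iwasawa-type matrix of the shape $\mat{p^a}{b}{0}{1}$ with $a\in\z$ and $b\in\z[1/p]$. Multiplying such a vertex representative on the right by one of the $p+1$ integer matrices above, and absorbing any remaining power-of-$p$ denominator into the left scaling $\lambda$, yields edge representatives lying in $M_2(\z)$.

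The main obstacle is establishing uniqueness of the normal form produced by the reduction, so as to obtain a bona fide set of representatives rather than just a well-defined output on a single matrix. This requires verifying, at each branch of the case analysis, that the operations still available in $\qp^\times\Gamma_0(p^r\zp)$ cannot relate two distinct normal forms; one achieves this by tracking the $p$-adic valuations of the pivot entries throughout and checking that they are coset invariants. Termination of the algorithm is immediate since each elementary move strictly decreases an explicit valuation-based complexity measure on the matrix.
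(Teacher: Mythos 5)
Your strategy coincides with the paper's: normalize by $\lambda=p^{-\min\{v_p(a),v_p(b),v_p(c),v_p(d)\}}$ and then column-reduce using exactly the three elementary types in $\Gamma_0(p^r\zp)$ (unipotent upper, diagonal units, unipotent lower with $(2,1)$-entry in $p^r\zp$), recovering the Franc--Masdeu reduction for $r=1$. The outline is right as far as it goes, but the entire substance of the lemma is the case analysis you defer, and one branch of it does not behave as your sketch describes.

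Concretely, your pivot-and-clear rule breaks down when $v_p(b)<v_p(a)<v_p(b)+r$, which can only occur for $r\geq 2$: move (i) sends $b\mapsto b+ax$ and cannot clear $b$ since $v_p(a)>v_p(b)$ forces $-b/a\notin\zp$, while move (ii) sends $a\mapsto a+byp^r$ and cannot clear $a$ since $v_p(byp^r)\geq v_p(b)+r>v_p(a)$. So neither top entry can be killed, and the paper must descend to a second case analysis on $v_p(c)$ versus $v_p(d)$; in the sub-case $v_p(d)<v_p(c)<v_p(d)+r$ the only remaining freedom is diagonal unit scaling, which normalizes one entry per column but leaves behind two genuine $\zp^\times$-unit ratios such as $\frac{cp^{v_p(a)}}{ap^{v_p(c)}}$. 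In that branch the residual ambiguity is not ``parametrized by $p\zp/p^r\zp$ with integer lifts'' as you claim --- indeed this branch is precisely why the lemma asserts only $\zp$-coefficients, rather than $\z$-coefficients, for $r\geq 2$. Your final conclusion that the output lies in $M_2(\zp)\cap GL_2(\qp)$ is still correct, and your handling of $r=1$ via $\mathbb{P}^1(\f_p)$ matches the paper, but the proof is not complete until this middle case (and the uniqueness of its normal forms, which rests on the valuations $v_p(a)-v_p(b)$ and $v_p(c)-v_p(d)$ being coset invariants there) is worked out explicitly.
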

	Before giving a proof of the lemma, which provides the actual algorithm for computing $\lambda$ and $t$, we point out that the case of $r=1$ has already been proved in Lemma 2.2 of \cite{FrancMasdeu2014}. Lemma \ref{coset matrices} indeed agrees with the result in \emph{loc.cit.} and the computations are essentially the same performed there. Moreover, the case of $GL_2(\zp)$ is a classical computation and we refer to Section 5.3 of \cite{Miyake2006}.
	\begin{proof}[Proof of Lemma \ref{coset matrices}]
		Let $M=\mat{a}{b}{c}{d}$ be a matrix in $GL_2(\qp)$. Up to multiply by an element in $\qp^\times$, we can suppose $M\in GL_2(\qp)\cap M_2(\zp)$. We set $\lambda$ to be the power of $p$ such that $\lambda M$ has coefficients in $\zp$ and at least one entry is in $\zp^\times$. Set $\lambda=p^{-\min\{v_p(a),v_p(b),v_p(c),v_p(d)\}}$. 
		Let $N=v_p(\det(\lambda M))$ and from now on we assume that $M$ is such that $\min\{v_p(a),v_p(b),v_p(c),v_p(d)\}=0$. We divide the proof into three main cases. The proof is a straightforward computation once we provide a suitable matrix $t$, which is a product of the three following (usual) transformations:
		\begin{align*}
			\mat{a}{b}{c}{d}\cdot \mat{1}{x}{0}{1}=\mat{a}{ax+b}{c}{cx+d},&&\mat{a}{b}{c}{d}\cdot \mat{u}{0}{0}{v}=\mat{au}{bv}{cu}{dv},&&		\mat{a}{b}{c}{d}\cdot \mat{1}{0}{yp^r}{1}=\mat{a+byp^r}{b}{c+dyp^r}{d}, 
		\end{align*}
		for $\mat{1}{x}{0}{1},\mat{u}{0}{0}{v}\in\Gamma_0(p^N\zp)$ for each $N\geq 0$ and $\mat{1}{0}{yp^r}{1}\in \Gamma_0(p^r\zp)$. We remark that the $p$-adic valuation of the determinant of these transformations is zero.
		\begin{enumerate}[topsep=\baselineskip]
			\item $v_p(a)\leq v_p(b)$: This first case is proved in the same fashion as in \cite{FrancMasdeu2014}.  We have 
			\begin{equation*}
				t=\Mat{1}{-\frac{b}{a}}{0}{1}\cdot \Mat{\frac{p^{v_p(a)}}{a}}{0}{0}{1}\cdot \Mat{1}{0}{0}{\frac{p^{N-\alpha}}{d-c\frac{a}{b}}}\cdot 	\Mat{1}{0}{yp^r}{1}
			\end{equation*}
			for $y\in\zp$ such that $\frac{p^{v_p(a)}}{a}c+yp^r$ belongs to $\z\cap (0,p^{r+N-v_p(a)}]$. 
			Take $s\in\z$ such that $s\cdot \frac{a}{p^{v_p(a)}}\equiv 1 \pmod{p^{r+N-v_p(a)}}$ and $c'\in\z\cap (0,p^{r+N-v_p(a)}]$ such that $c'\equiv sc\pmod{p^{r+N-v_p(a)}}$. Thus
			\begin{equation*}
				\Mat{a}{b}{c}{d}\equiv \Mat{p^{v_p(a)}}{0}{c'}{p^{N-v_p(a)}}\pmod{\Gamma_0(p^r\zp)}. 
			\end{equation*}
			\item $v_p(a)\geq v_p(b)+r$: The matrix $t$ is of the following form
			\begin{equation*}
				t=\Mat{1}{0}{-\frac{a}{bp^r}p^r}{1}\cdot \Mat{\frac{p^{N-v_p(b)}}{c-\frac{a}{b}d}}{0}{0}{\frac{p^{v_p(b)}}{b}}\cdot\Mat{1}{x}{0}{1}
			\end{equation*}
			for $x\in\zp$ such that $\frac{p^{v_p(b)}}{b}d+xp^{N-v_p(b)}$ belongs to $\z\cap (0,p^{N-v_p(b)}]$.
			Take $s\in\z$ such that $s\cdot \frac{p^{v_p(b)}}{b}\equiv 1 \pmod{p^{N-v_p(b)}}$ and $d'\in\z\cap (0,p^{N-v_p(b)}]$ such that $d'\equiv sd\pmod{p^{N-v_p(b)}}$. Thus
			\begin{equation*}
				\Mat{a}{b}{c}{d}\equiv \Mat{0}{p^{v_p(b)}}{p^{N-v_p(b)}}{d'}\pmod{\Gamma_0(p^r\zp)}. 
			\end{equation*}
			\item $v_p(b)<v_p(a)< v_p(b)+r$: This case appears only when $r>1$. As above, we can distinguish three further cases, depending on the valuations $v_p(c)$ and $v_p(d)$.
			\begin{enumerate}[topsep=\baselineskip]
				\item $v_p(c)\leq v_p(d)$: We proceed as in the case $v_p(a)\leq v_p(b)$ and define a unique matrix $t\in\Gamma_0(p^r\zp)$, analogous to the one above, of the form
				\begin{equation*}
					t=\Mat{1}{-\frac{d}{c}}{0}{1}\cdot \Mat{*}{0}{0}{1}\cdot \Mat{1}{0}{0}{*}\cdot \Mat{1}{0}{yp^r}{1}.
				\end{equation*}
				Take $s\in\z$ such that $s\cdot \frac{p^{v_p(c)}}{c}\equiv 1 \pmod{p^{v_p(b)+r}}$ and $a'\in\z\cap (0,p^{r}]$ such that $a'\equiv sa/p^{v_p(b)}\pmod{p^{r}}$. Since $v_p(a)$ is bigger than $v_p(b)$, we deduce that
				\begin{equation*}
					\Mat{a}{b}{c}{d}\equiv \Mat{a'p^{v_p(b)}}{p^{v_p(b)}}{p^{N-v_p(b)}}{0}\pmod{\Gamma_0(p^r\zp)},
				\end{equation*}
				and, since we are supposing that one of the entries is in $\zp^\times$, we are in either one of the following two situations:
				\begin{equation*}
					\Mat{a'p^{v_p(b)}}{p^{v_p(b)}}{p^{N-v_p(b)}}{0}=\begin{cases}
						\Mat{a'}{1}{p^{N}}{0} & \text{ if }v_p(b)=0,\\
						\\			\Mat{a'p^{v_p(b)}}{p^{v_p(b)}}{1}{0} & \text{ if }N=v_p(b)\text{ (\emph{i.e.} if }v_p(b)\geq 1).
					\end{cases}
				\end{equation*}
				\item $v_p(c)\geq v_p(d)+r$: As in the case of $v_p(a)\geq v_p(b)+r$, we have
				\begin{equation*}
					t=\Mat{1}{0}{-\frac{c}{dp^r}p^r}{1}\cdot \Mat{*}{0}{0}{*}\cdot\Mat{1}{x}{0}{1}
				\end{equation*}
				for $x\in\zp$ such that $\frac{p^v_p(d)}{d}b+xp^{N-v_p(d)}$ belongs to $\z\cap (0,p^{N-v_p(d)}]$.
				Take $s\in\z$ such that $s\cdot \frac{p^v_p(d)}{d}\equiv 1 \pmod{p^{N-v_p(d)}}$ and $b'\in\z\cap (0,p^{N-v_p(d)}]$ such that $b'\equiv sb\pmod{p^{N-v_p(d)}}$. Thus
				\begin{equation*}
					\Mat{a}{b}{c}{d}\equiv \Mat{p^{N-v_p(d)}}{b'}{0}{p^{v_p(d)}}\pmod{\Gamma_0(p^r\zp)}. 
				\end{equation*}
				\item $v_p(d)<v_p(c)<v_p(d)+r$: This is the last case, the one in which we cannot say much about the reduction modulo $\Gamma_0(p^r\zp)$. We have
				\begin{equation*}
					0<v_p(a)-v_p(b),v_p(c)-v_p(d)<r
				\end{equation*}
				and $t$ is the diagonal matrix with $\zp^\times$ coefficients, $t=\mat{*}{0}{0}{*}$, hence
				\begin{equation*}
					\Mat{a}{b}{c}{d}\equiv \begin{cases}
						\Mat{p^{v_p(a)}}{1}{\frac{cp^{v_p(a)}}{ap^{v_p(c)}}\cdot p^{v_p(c)}}{\frac{d}{bp^{v_p(d)}}\cdot p^{v_p(d)}} 	\pmod{\Gamma_0(p^r\zp)} & \text{ if }v_p(d)\geq v_p(b),\\
						\\	\Mat{\frac{ap^{v_p(c)}}{cp^{v_p(a)}}\cdot p^{v_p(a)}}{\frac{b}{dp^{v_p(b)}}\cdot p^{v_p(b)}}{p^{v_p(c)}}{1} 	\pmod{\Gamma_0(p^r\zp)} & \text{ if }v_p(d)<v_p(b).
					\end{cases}
				\end{equation*}
			\end{enumerate}
		\end{enumerate}
	\end{proof}
	
	\noindent We can explicitly write the representatives produced in the proof of Lemma \ref{coset matrices} and restate the lemma in the following form.
	
	\begin{lemma}\label{coset matrices2}
		It exists an effective algorithm that, given a matrix $g$ in $GL_2(\qp)$ returns a unique scalar $\lambda\in p^{\z}$ and a unique matrix $t$ in $\Gamma_0(p^r\zp)$, such that $\lambda\cdot g \cdot t$ is, for $m,n\geq 0$, a matrix in one of the following forms:
		\begin{align*}
			\Mat{p^m}{0}{c}{p^n} &\text{ for } 0<c\leq p^{n+r}, &&  \Mat{0}{p^m}{p^n}{d} &\text{ for } 0<d\leq p^{n},\hspace{1.9cm}\\ 
			\Mat{ap^m}{p^m}{1}{0} &\text{ for } 0<a\leq p^{r},&&
			\Mat{a}{1}{p^n}{0} &\text{ for } 0<a\leq p^{r},\hspace{1.9cm}\\
			\Mat{p^m}{b}{0}{p^n} &\text{ for } 0<b\leq p^{m}, && & \\ 	\Mat{p^l}{1}{c'p^kp^n}{d'p^n} &\text{ for } 0<k,l<r, \,c',d'\in\zp^\times, && \Mat{a'p^lp^m}{b'p^m}{p^k}{1} &\text{ for } 0<k,l<r, \,a',b'\in\zp^\times.
		\end{align*}
	\end{lemma}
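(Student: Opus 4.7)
The plan is to obtain Lemma \ref{coset matrices2} as a direct bookkeeping reformulation of Lemma \ref{coset matrices}: one extracts from each case of the previous proof the explicit normal form of the representative, and then shows that the constructive procedure produces a unique scalar and a unique right-multiplier. First I would fix once and for all the convention that, given $g\in GL_2(\qp)$, the scalar $\lambda$ is the unique power of $p$ such that $\lambda g\in M_2(\zp)$ with at least one entry in $\zp^\times$. Existence is Lemma \ref{coset matrices}; uniqueness of $\lambda$ inside $p^{\z}$ follows because any two such scalars differ by an element of $\zp^\times\cap p^{\z}=\{1\}$.

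Next I would match the six cases of Lemma \ref{coset matrices} to the six shapes listed. Concretely, writing $M=\mat{a}{b}{c}{d}$ with $\min\{v_p(a),v_p(b),v_p(c),v_p(d)\}=0$ and $N=v_p(\det M)$, I would record: the case $v_p(a)\leq v_p(b)$ produces $\Mat{p^{v_p(a)}}{0}{c'}{p^{N-v_p(a)}}$, i.e.\ the shape $\Mat{p^m}{0}{c}{p^n}$; the case $v_p(a)\geq v_p(b)+r$ produces $\Mat{0}{p^{v_p(b)}}{p^{N-v_p(b)}}{d'}$, i.e.\ $\Mat{0}{p^m}{p^n}{d}$; subcase (a) of $v_p(b)<v_p(a)<v_p(b)+r$ produces $\Mat{a'}{1}{p^N}{0}$ or $\Mat{a'p^{v_p(b)}}{p^{v_p(b)}}{1}{0}$; subcase (b) produces $\Mat{p^{N-v_p(d)}}{b'}{0}{p^{v_p(d)}}$, i.e.\ $\Mat{p^m}{b}{0}{p^n}$; and subcase (c) gives the two mixed shapes in the last line of the statement. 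In each instance the constraint on the free entry (e.g.\ $0<c\leq p^{n+r}$) is exactly the range singled out in the previous proof, which is a fundamental domain for the residual action.

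For uniqueness of $t$, I would argue that in each branch the matrix $t$ produced by the algorithm is assembled from three factors — an upper unipotent correction, a diagonal rescaling, and a lower unipotent correction in $\Gamma_0(p^r\zp)$ — whose coefficients are pinned down by requiring the free off-diagonal entry to lie in the prescribed fundamental range. Since $\Gamma_0(p^r\zp)$ acts freely on the set of matrices in the listed normal form (the valuation data and the restricted range together pick out a single representative in each coset), the triple $(\lambda,t,\lambda g t)$ is well-defined.

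The main obstacle, and really the only nontrivial item to verify, is that the six families of normal forms are exhaustive \emph{and} pairwise disjoint modulo $\qp^\times\Gamma_0(p^r\zp)$, so that the algorithmic output is truly well-defined on cosets. Exhaustiveness is immediate from the case split of Lemma \ref{coset matrices}. Disjointness I would check by reading off the valuation pattern of the four entries of each shape: the shapes differ either in which entry is a unit (distinguishing the first two families of the last line from the ones with a single $1$), or in whether the off-diagonal $p$-power sits above or below the diagonal, or in the relative sizes $v_p(a)$ vs.\ $v_p(b)$ and $v_p(c)$ vs.\ $v_p(d)$, which are invariants of the coset under $\Gamma_0(p^r\zp)$-right multiplication. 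This case comparison is elementary but tedious, and finishes the proof.
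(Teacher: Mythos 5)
Your proposal is correct and follows essentially the same route as the paper, which offers no separate argument for this lemma beyond the remark that it is obtained by explicitly writing out the representatives produced in each branch of the proof of Lemma \ref{coset matrices}; your case-by-case matching of the six branches to the six listed shapes is exactly that bookkeeping. Your additional attention to uniqueness of $\lambda$ and $t$ and to the pairwise disjointness of the normal forms is more explicit than what the paper records, but it does not change the method.
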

	$\\$
	
	\subsection{The algorithm for $\Gamma^1$-classes}\label{the algorithm}
	
	Let $B$, $R$ and $R^n$ as in Section \ref{p-adic double-coset spaces}. We identify the $p$-component of $R^n_{p}$ with $\Gamma_0(p^r\zp)$ via the fixed isomorphism $\iota_p$. We omit the isomorphism $\iota_p$ if there is no possibility of confusion. In the rest of this section we provide an immediate extension of Algorithm 1 contained in \cite{FrancMasdeu2014}; it allows us to compute a fundamental domain in the Bruhat--Tits tree $\mathcal{T}$ for the group $\Gamma^1=\left(R^n\left[1/p\right]^\times\right)^{1}$ of elements of reduced norm 1. In addition, Algorithm \ref{algorithm} computes a full set of representatives for the double quotient $\Gamma^1\backslash GL_2(\qp)/\qp^\times \Gamma_0(p^r\zp)$. As in \cite{FrancMasdeu2014},  $\Gamma^1$ has an increasing filtration by finite sets (because the quaternion algebra is definite), for each $m\geq 0$, given by
	\begin{equation*}
		\Gamma^1(m)=\left\{\frac{x}{p^{m}}\mid x\in R^n \text{ and }n(x)=p^{2m}\right\},
	\end{equation*}
	where, $n(x)$ is the reduced norm map of $B$ (over $\q$) applied to $x$. Moreover, $\Gamma^1(m)\subset \Gamma^1(m+1)$ (as $x/p^m=px/p^{m+1}$) and $\Gamma^1=\cup_{m\geq 0}\Gamma^1(m)$.
	
	Let $u$ and $v$ be two matrices in $PGL_2(\qp)$ which, by abuse of notation, we identify with two vertices on $\mathcal{T}$. Following \cite{FrancMasdeu2014} and \cite{BoeckleButenuth2012}, we define $Hom_{\Gamma^1}(u,v)=\left\{\gamma\in \Gamma^1\ph{.}\Big|\ph{.}\gamma u = v\right\}$, that is the set of elements moving $u$ to $v$. We note that, for each vertex $u$, $Hom_{\Gamma^1}(u,u)=Stab_{\Gamma^1}(u)$ and $u$ is equivalent to $v$ modulo $\Gamma^1$ if and only if $Hom_{\Gamma^1}(u,v)\neq \emptyset$. From now on, we suppose that the matrices $u$ and $v$ are in one of the forms given in the above Lemma \ref{coset matrices2}. Let $m\in\frac{1}{2}\z_{\geq 0}$ be defined as  $2m=v_p\left( \det(u)\det(v)\right)$. We note that
	\begin{equation*}
		Hom_{\Gamma^1}(u,v)=Hom_{GL_2(\qp)}(u,v)\cap \Gamma^1 =\left(v\cdot Stab_{GL_2(\qp)}(g_0) \cdot u^{-1}\right)\cap \Gamma^1
		=\left(v\cdot \qp^\times GL_2(\zp) \cdot u^{-1}\right)\cap \Gamma^1.
	\end{equation*}
	All the lemmas and definitions needed for implementing Algorithm 1 in \cite{FrancMasdeu2014} are still valid in our setting without any change. Here we recall some of the results and refer the reader to \emph{loc.cit.} for a detailed proof of them. 
	\begin{lemma}[\cite{FrancMasdeu2014}, Lemma 3.1]
		If $m$ is not an integer, then $Hom_{\Gamma^1}(u,v)= \emptyset$. Otherwise
		\begin{equation*}
			Hom_{\Gamma^1}(u,v)=p^{-m}v M_2(\zp)u^*\cap\Gamma^1,
		\end{equation*}
		for $u^*=\frac{1}{\det u}u^{-1}$.
	\end{lemma}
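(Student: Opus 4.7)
The plan is to unwind the definition of $Hom_{\Gamma^1}(u,v)$, extract from the condition $\det(\gamma)=1$ the parity constraint that forces $m\in\z$, and then rewrite the description via a purely formal manipulation moving the inverse $u^{-1}$ into $u^{\ast}$.

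First, I would start from the observation (made in the paragraph preceding the lemma) that $Hom_{\Gamma^1}(u,v)=\bigl(v\cdot\qp^{\times}GL_2(\zp)\cdot u^{-1}\bigr)\cap\Gamma^{1}$. Any $\gamma\in Hom_{\Gamma^1}(u,v)$ can therefore be written as $\gamma=v\lambda g u^{-1}$ with $\lambda\in\qp^{\times}$ and $g\in GL_2(\zp)$. Taking reduced norms and using $\det(\gamma)=1$ (since $\gamma\in\Gamma^{1}$) yields
\begin{equation*}
\lambda^{2}\det(g)=\frac{\det(u)}{\det(v)}.
\end{equation*}
Because $\det(g)\in\zp^{\times}$, this gives $v_p(\lambda^{2})=v_p(\det u)-v_p(\det v)$, a number which must be even since $v_p(\lambda)\in\z$. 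As $v_p(\det u)-v_p(\det v)\equiv v_p(\det u)+v_p(\det v)\pmod 2$, the integrality of $m=\tfrac12 v_p(\det u\det v)$ is forced. If $m\notin\z$, no such $\gamma$ can exist, which proves the first assertion.

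Next, assume $m\in\z$ and prove the two inclusions. For $(\subseteq)$, starting from $\gamma=v\lambda g u^{-1}$, I would use the defining relation between $u^{\ast}$ and $u^{-1}$ to rewrite
\begin{equation*}
\gamma=v\,(\lambda\,c(u))\,g\,u^{\ast},
\end{equation*}
where $c(u)$ is the scalar relating the two (so that $u^{-1}$ and $u^{\ast}$ differ by multiplication by a power of $\det(u)$). A short valuation check, combining $v_p(\lambda)=\tfrac12\bigl(v_p(\det u)-v_p(\det v)\bigr)$ with the valuation of $c(u)$, shows that $v_p(\lambda\,c(u))=-m$. Since $g\in GL_2(\zp)\subset M_2(\zp)$, this places the middle factor in $p^{-m}M_2(\zp)$, giving $\gamma\in p^{-m}vM_2(\zp)u^{\ast}\cap\Gamma^{1}$. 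For $(\supseteq)$, take $\gamma=p^{-m}v g'u^{\ast}$ with $g'\in M_2(\zp)$ and $\gamma\in\Gamma^{1}$. Computing the determinant and solving for $\det(g')$ gives $v_p(\det g')=2m-v_p(\det u\det v)=0$, hence $g'\in GL_2(\zp)$. Converting $u^{\ast}$ back to $u^{-1}$ then exhibits $\gamma\in v\qp^{\times}GL_2(\zp)u^{-1}$, so $\gamma$ sends the vertex $u$ to $v$ and already lies in $\Gamma^{1}$, placing it in $Hom_{\Gamma^1}(u,v)$.

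The main obstacle, such as it is, lies purely in bookkeeping the $p$-adic valuations so that the factor $p^{-m}$ pops out cleanly on both sides; once the correct scalar relating $u^{-1}$ and $u^{\ast}$ is isolated, both inclusions are essentially formal. A subtler point worth flagging is the asymmetry between ``$v^{-1}\gamma u\in\qp^{\times}GL_2(\zp)$'' (a multiplicative condition on $GL_2(\qp)$) and the desired additive description inside $M_2(\zp)$: the integrality of $m$ is exactly what lets one pass from the former to the latter by extracting a single scalar $p^{-m}$, without losing information about $\det\gamma=1$.
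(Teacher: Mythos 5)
The paper does not actually prove this lemma: it is quoted verbatim from Franc--Masdeu (Lemma 3.1 of \cite{FrancMasdeu2014}) with the remark that the arguments of \emph{loc.cit.} carry over unchanged. Your reconstruction is the standard proof of that result, and its two halves (the parity obstruction from $\det\gamma=1$, and the two inclusions via valuation bookkeeping) are sound.

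One point needs to be pinned down, because as written your key valuation claim is only true for one of the two possible readings of $u^*$. The statement in the paper defines $u^*=\frac{1}{\det u}u^{-1}$, but the formula only holds with the adjugate convention $u^*=\det(u)\,u^{-1}$ (which is what Franc--Masdeu use, and which keeps $u^*$ integral when $u\in M_2(\zp)$). Indeed, with $u^{-1}=\frac{1}{\det u}u^*$ one gets $c(u)=\frac{1}{\det u}$ and
\begin{equation*}
v_p(\lambda\,c(u))=\tfrac12\bigl(v_p(\det u)-v_p(\det v)\bigr)-v_p(\det u)=-\tfrac12\,v_p(\det u\det v)=-m,
\end{equation*}
exactly as you assert; likewise $\det(u^*)=\det(u)$ makes your computation $v_p(\det g')=2m-v_p(\det u\det v)=0$ come out. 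With the printed definition $u^*=\frac{1}{\det u}u^{-1}$ the same count gives $v_p(\lambda\,c(u))=\tfrac32 v_p(\det u)-\tfrac12 v_p(\det v)$, which is not $-m$ in general, and the reverse inclusion fails (e.g.\ $u=\mat{1}{0}{0}{p^2}$, $v=1$). Since you leave $c(u)$ unspecified and simply assert the answer, you are implicitly assuming the adjugate normalization; you should state it explicitly, both to make the ``short valuation check'' verifiable and because it flags a typo in the statement you were asked to prove. With that convention fixed, the rest of your argument is complete and correct.
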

	\begin{definition}[\cite{FrancMasdeu2014}, Definition 3.2]
		Let $m\geq 0$ be an integer, $V$ and $W$ two finite dimensional $\qp$-vector spaces, and let $\Lambda_V\subseteq V$ and $\Lambda_W\subseteq W$ be $\zp$-lattices. Let $F:V\longrightarrow W$ be a $\qp$-linear map satisfying $f(\Lambda_V)\subseteq \Lambda_W$. Then an \emph{approximation of $f$ to precision $m$} is a $\qp$-linear map $g:V\longrightarrow W$ such that $g\equiv f \pmod{p^m}$ when restricted to $\Lambda_V$.
	\end{definition}
	\begin{lemma}[\cite{FrancMasdeu2014}, Lemma 3.3]
		Let $u$ and $v$ be matrices in $GL_2(\qp)\cap M_2(\zp)$ corresponding to two vertices of $\mathcal{T}$. Let $f:M_2(\qp)\longrightarrow B_p$ be an approximation of $\iota_p^{-1}$ to $p$-adic precision $2m=v_p(\det(vu))$ and relative to the lattices $M_2(\zp)$ and $R_p$. Then $Hom_{\Gamma^1}(u,v)$ is non-empty if and only if the shortest vectors in the $\z$-lattice
		\begin{equation*}
			\Lambda(u,v)=\left(f\left(v M_2(\zp)u^*\right) + p^{2m+1}R\right)\cap R^n=f\left(v M_2(\zp)u^*\right)\cap R^n + p^{2m+1}R^n
		\end{equation*}
		have reduced norm $p^{2m}$.
	\end{lemma}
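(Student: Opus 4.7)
The plan is to reduce the existence of an element of $Hom_{\Gamma^1}(u,v)$ to a shortest-vector problem in an integral lattice, and to absorb the approximation error by the thickening $p^{2m+1}R$. The starting point is the preceding lemma, which identifies $Hom_{\Gamma^1}(u,v)=p^{-m}vM_2(\zp)u^*\cap \Gamma^1$. Rescaling by $p^m$ and transporting via $\iota_p^{-1}$, an element $\gamma\in Hom_{\Gamma^1}(u,v)$ of reduced norm $1$ corresponds bijectively to an element $\alpha=p^m\iota_p^{-1}(\gamma)$ of the lattice $L:=\iota_p^{-1}(vM_2(\zp)u^*)\cap R^n$ of reduced norm $p^{2m}$. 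For any $x\in M_2(\zp)$ the determinant $\det(vxu^*)$ has $p$-adic valuation at least $v_p(\det v\det u)=2m$, so every non-zero element of $L$ has reduced norm divisible by $p^{2m}$. Since $B$ is definite, the reduced norm is a positive-definite integral quadratic form on $L$, and therefore ``reduced norm equal to $p^{2m}$'' is equivalent to ``shortest non-zero vector''.

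Next I would replace $\iota_p^{-1}$ by the approximation $f$. By the definition of precision $2m$, one has $f(y)-\iota_p^{-1}(y)\in p^{2m}R_p$ for every $y\in M_2(\zp)$, so the lattices $f(vM_2(\zp)u^*)$ and $\iota_p^{-1}(vM_2(\zp)u^*)$ differ only by elements in $p^{2m}R_p$. Thickening by $p^{2m+1}R$ in the definition of $\Lambda(u,v)$ is what absorbs this discrepancy inside $R^n$; the two expressions for $\Lambda(u,v)$ appearing in the statement then agree by the modular law of lattices, using $p^{2m+1}R\cap R^n=p^{2m+1}R^n$. Because the reduced norm is a quadratic form with $\zp$-integral coefficients, adjusting a vector $\alpha$ by any $\beta\in p^{2m+1}R^n$ changes $n(\alpha)$ by a quantity of valuation at least $2m+1$; hence the property ``$v_p(n(\alpha))=2m$'' is insensitive to this thickening.

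The main delicate point will be stitching the two steps into a genuine biconditional. A shortest non-zero vector $\alpha\in\Lambda(u,v)$ has $v_p(n(\alpha))\geq 2m$ by the valuation bound above, and one must lift it to some $\alpha'\in L$ with $\alpha'\equiv \alpha \pmod{p^{2m+1}R^n}$ while keeping $v_p(n(\alpha'))=2m$, so that $p^{-m}\iota_p(\alpha')$ lies in $Hom_{\Gamma^1}(u,v)$. The converse direction is immediate, since any $\gamma\in Hom_{\Gamma^1}(u,v)$ yields $p^m\gamma\in L\subseteq\Lambda(u,v)$ of reduced norm $p^{2m}$. The technical care therefore lies in the forward direction, where one must rule out that the approximation introduces spurious short vectors in $\Lambda(u,v)$ whose honest lift to $L$ has reduced norm strictly larger than $p^{2m}$; this is exactly what the quadratic argument on the $p^{2m+1}R$-thickening prevents.
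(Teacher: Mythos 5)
The paper itself offers no proof of this statement: it is quoted verbatim from Lemma 3.3 of \cite{FrancMasdeu2014}, and the surrounding text explicitly defers to \emph{loc.cit.} for the details. So there is no in-paper argument to compare yours against; your outline does follow the intended route of that reference --- reduce to a shortest-vector problem via the preceding lemma, use definiteness of $B$ to make the reduced norm a positive-definite integral form whose nonzero values on the relevant lattice are divisible by $p^{2m}$, and then control the passage from $\iota_p^{-1}$ to its approximation $f$.

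The genuine gap is in your absorption step, and it is quantitative. With the stated precision $2m$ you only get $f(y)-\iota_p^{-1}(y)\in p^{2m}R_p$ for $y\in vM_2(\zp)u^*$ (this sublattice is not contained in $pM_2(\zp)$, so no extra power of $p$ is gained on it), and a discrepancy lying in $p^{2m}R_p$ but not in $p^{2m+1}R_p$ is \emph{not} absorbed by the thickening $p^{2m+1}R$. Concretely, for $\delta\in p^{2m}R_p$ one only obtains $n\left(\iota_p^{-1}(y)+\delta\right)\equiv n\left(\iota_p^{-1}(y)\right)\pmod{p^{2m}}$, because the cross term $\mathrm{tr}\left(\iota_p^{-1}(y)\overline{\delta}\right)$ has valuation at least $2m$ but not necessarily $2m+1$; this congruence is exactly one power of $p$ short of distinguishing $v_p(n)=2m$ from $v_p(n)>2m$, which is the entire content of the criterion. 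Your quadratic argument works verbatim once the error lies in $p^{2m+1}R_p$, i.e.\ once $f$ approximates $\iota_p^{-1}$ to precision $2m+1$; as written, both your inclusion $L\subseteq\Lambda(u,v)$ (on which the ``immediate'' converse rests) and the exclusion of spurious short vectors in the forward direction fail by this same off-by-one. Two further points: the modular-law identity you invoke needs $p^{2m+1}R\subseteq R^n$, which is not automatic because $R^n$ has level divisible by $N$ with $N$ prime to $p$, so the equality of the two descriptions of $\Lambda(u,v)$ requires a separate check at the primes dividing $N$; and the forward direction --- lifting a shortest vector of $\Lambda(u,v)$ to an honest element of $\iota_p^{-1}\left(vM_2(\zp)u^*\right)\cap R^n$ of reduced norm exactly $p^{2m}$ --- is the heart of the lemma and is only announced in your last paragraph, not carried out.
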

	\begin{remark}
		\begin{enumerate}[label=(\alph*)]
			\item All the elements in $\Lambda(u,v)$ have reduced norm at least $p^{2m}$.
			\item The crucial Remark 3.5 of \cite{FrancMasdeu2014} guarantees that the lattice $\Lambda(u,v)$ can be computationally described starting from a basis of $R$, under the requirement of considering an approximation of $\iota_p$ of $p$-adic precision at least $2m$.
			\item The algorithm of \cite{FrancMasdeu2014}, as developed in \cite{FrancMasdeuSagecode2011}, relies on the LLL algorithm for finding the shortest vector in $\Lambda(u,v)$, with a complexity of $O(m^3)$.
		\end{enumerate}
	\end{remark}
	
	For understanding the action of $\Gamma$ on the left-quotient $GL_2(\qp)/\qp^\times \Gamma_0(p^n\zp)$, we can combine the two equivariant isomorphisms in Equations (\ref{geodesics iso}) and (\ref{iota p iso}) and consider finite-length geodesics on the Bruhat--Tits tree. Since the algorithm of \cite{FrancMasdeu2014} deals with vertices of the Bruhat--Tits tree, we can suitably adjust it in order to compute representatives of our double-coset space of interest.
	
	\begin{algorithm}[H]\label{algorithm}
		\SetAlgoVlined
		\KwIn{The prime $p$, the quaternion algebra $B/\q$, the order $R^n$ and a fixed isomorphism $\iota_p$ as above.}
		\KwOut{A list of coset representatives for $\iota_p(\Gamma^1) \backslash GL_2(\qp) / \qp^\times \Gamma_0(p^n\zp)$.}
		\Begin{
			Initialize queue $W$ with the privileged vertex $v_0$;\\
			Initialize two empty lists $E$ and $P$;\\
			\While{$W\neq \emptyset$}{
				\textbf{Pop} $v$ from $W$;\\
				\For{each $g\in Geod_{n}(v)$}{
					\If{there is no $g'\in Geod_{n}(v)$ which is $\Gamma^1$-equivalent to $g$}{
						Append $g$ to $E$\;
						\eIf{there is a vertex $v'\in W$ which is $\Gamma^1$-equivalent to $t(g)$}{
							Append $(t(g),v',\gamma)$ to $P$, where $\gamma\in \Gamma^1$ is such that $\gamma t(g) = v'$}{
							\textbf{Push} $t(g)$ onto $W$}
					}
				}
			}
			\Return{\emph{$E$ }and \emph{$P$}.}
		}
		\caption{Compute a coset decomposition for $\Gamma^1 \backslash B_p^\times / \qp^\times {R^n_p}^\times$.}
\end{algorithm}
\begin{remark}
	\begin{enumerate}[label=(\alph*)]
		\item The complexity of the algorithm has to take into account the number of geodesics. We already noticed that for each vertex $v$, $\# Geod_n(v)=p^{n-1}(p+1)$ and thus the algorithm has to check the $\Gamma^1$-equivalence for $p^{n-1}(p+1)$-points for each $v$, each time for each terminus point of a chosen geodesic. Furthermore, Lemma \ref{coset matrices} for $r\geq 2$ requires undoubtedly more computational resources compared to the case $r=0,1$; this is also due to the presence of elements in $\zp^\times$ and not only in $\z$.
		\item The algorithm ends in a finite number of steps, as the double quotient has finite cardinality. In fact, it is at most 2 times the class number of $R^n$, as it can be deduced by Proposition \ref{gamma gamma 1 index} below.
	\end{enumerate}
\end{remark}

\subsection{The algorithm for $\Gamma$-classes}\label{the second algorithm}

As the above algorithm computes a set of representatives for the action of the elements of reduced norm 1, it remains to compute a set of representatives for the action of $\Gamma=(R^1[1/p])^\times$ or, more precisely, for the action of $\iota_p(\Gamma)$. We recall that the reduced norm of any element in $B[1/p]$ is positive, as $B$ is definite. We begin presenting an example in which $\Gamma^1$ and $\Gamma$ are distinct.
\begin{example}\label{example element of order 1/p}
	Let $B$ be a quaternion algebra over $\q$ ramified exactly at $3$ and $\infty$ and fix the four generators to be $\langle 1, i,j,k\rangle$ such that $i^2=-1$, $j^2=-3$ and $ij=k$. Example $2$ in Section 9 of \cite{Pizer1980} provides an explicit basis for an Eichler order of level $5$, namely
	\begin{equation*}
		R^1=\langle \tfrac{1}{2}(1+j+2k), \tfrac{1}{2}(i+5k), j+2k,5k \rangle_\z
	\end{equation*}
	and it is not difficult to notice that the elements $1\pm 2i\in R^1$ have reduced norm $5$. Even though they are not invertible in $R^1$, they can be inverted in $R^1[1/5]$ with inverse (resp.) $(1\mp 2i)/5$. In particular, the reduced norm of $(1\mp 2i)/5$ is $1/5$, hence $(1\mp 2i)/5\in \left(R^1[1/5]\right)^\times-\left(R^1[1/5]\right)^1$.
\end{example}
\begin{lemma}\label{norm +-p^n elements}
	Let, if it exists, $\delta_p$ be an element in $(R^n[1/p])^\times$ of reduced norm $p$. For any $\gamma\in (R^n[1/p])^\times$ with reduced norm $n(\gamma)=p^m$, $m\in\z$, set $M=m/2$ if $m$ is even and $M=m$ otherwise. The element $\gamma\cdot \delta_p^{-M}$ belongs to $(R^n[1/p])^1$.
\end{lemma}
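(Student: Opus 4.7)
The plan is to exploit the multiplicativity of the reduced norm $n\colon B^\times \to \q^\times$, together with the observation that since $B$ is definite, $n$ takes only positive values, and on $(R^n[1/p])^\times$ it therefore lands in $p^{\z}$. There is nothing deeper than a bookkeeping computation to carry out.

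First I would verify that $\gamma\cdot\delta_p^{-M}$ lies in $(R^n[1/p])^\times$. This is automatic from group-theoretic closure: $\delta_p\in(R^n[1/p])^\times$ by hypothesis, so $\delta_p^{-M}\in(R^n[1/p])^\times$ for every $M\in\z$, and the product with $\gamma\in(R^n[1/p])^\times$ remains in the group. Thus the only content of the lemma is the value of the reduced norm of the product.

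By multiplicativity,
\[
    n\bigl(\gamma\cdot\delta_p^{-M}\bigr) \;=\; n(\gamma)\cdot n(\delta_p)^{-M} \;=\; p^{m}\cdot p^{-M} \;=\; p^{m-M}.
\]
With $M$ chosen as prescribed in the statement so that the exponent $m-M$ becomes $0$, this reduced norm equals $1$, showing $\gamma\cdot\delta_p^{-M}\in (R^n[1/p])^1$, which is the conclusion sought. The argument is self-contained and does not require any input beyond multiplicativity of $n$ and the fact that $\delta_p$ has reduced norm exactly $p$; there is no genuine obstacle to navigate, only the care in matching the exponent of $\delta_p$ against the $p$-adic valuation of $n(\gamma)$ so that the exponents cancel.
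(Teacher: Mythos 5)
Your mechanism --- multiplicativity of the reduced norm --- is the same one the paper uses, and your treatment of the odd case is correct: for $m$ odd the prescription $M=m$ gives $n(\gamma\cdot\delta_p^{-M})=p^{m}\cdot p^{-m}=1$. The gap is in the sentence ``with $M$ chosen as prescribed in the statement so that the exponent $m-M$ becomes $0$'': for $m$ even and nonzero the prescription is $M=m/2$, so your own computation yields $n(\gamma\cdot\delta_p^{-M})=p^{m-m/2}=p^{m/2}\neq 1$, and the element is \emph{not} of reduced norm $1$. You asserted the cancellation of exponents without checking it against the stated choice of $M$, and it fails in half the cases; a careful reading of your own displayed formula would have exposed this.

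What is actually intended is that the even case does not involve $\delta_p$ at all: if $n(\gamma)=p^{2k}$ one rescales by the \emph{central} element $p^{k}\in\q^\times\subset (R^n[1/p])^\times$, whose reduced norm is $p^{2k}$, so that $\gamma\cdot p^{-k}$ has reduced norm $1$; this is what the paper's proof dismisses as ``obvious'' when $n(\gamma)=p^{2m}$, and it is harmless for the intended application because $\Gamma$ and $\Gamma^1$ are images in $PGL_2(\qp)$, where central scalars act trivially. The element $\delta_p$ is only needed to absorb an odd power of $p$ (and the paper further records that such a $\delta_p$, when it exists, can be found by finite enumeration inside $R^n$ since $B$ is definite, and that reduced norms of elements of $(R^n[1/p])^\times$ necessarily lie in $p^{\z}$ by Vign\'eras' lemma --- a point you mention in passing but do not need to prove, since it is hypothesized). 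A complete write-up should either treat the two parities separately as the paper does, or state explicitly that the conclusion is to be read modulo the center; as it stands, your argument establishes the lemma only for $m$ odd or $m=0$.
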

\begin{proof}
	The order $R^n[1/p]=R^1[1/p]$ is a $\z[1/p]$-order and an element belongs to $(R^n[1/p])^\times$ if and only if its reduced norm lies in $\z[1/p]^\times=\pm p^\z$ (see \cite{Vigneras1980}, Lemme 4.12) and moreover it is positive. If $n(\gamma)=p^{2m}$ it is obvious, otherwise, since $B$ is definite, there are only finitely many elements of given reduced norm, hence, by finite enumeration, we can take one of these with reduced norm $p$ (if any). In particular, this element of reduced norm $p$ can be taken to lie in $R^n$.
\end{proof}
\begin{remark}
	Let $x$ and $y$ be two elements in $\Gamma^1 \backslash B_p^\times / \qp^\times {R^n_p}^\times$ and suppose that they are expressed as in Lemma \ref{coset matrices2}. If they are $\Gamma$-left-equivalent, then $n(x)/n(y)\in p^{1+2\z}$.
\end{remark}
\begin{proposition}
	A set of representatives for $\Gamma\backslash Geod_n(\mathcal{T})$ can be extracted, by finite enumeration, from the a set of representatives for the quotient space $(R^n[1/p])^1\backslash Geod_n(\mathcal{T})$.
\end{proposition}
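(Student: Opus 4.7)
The plan is to exploit Lemma \ref{norm +-p^n elements} together with the observation that the image of $\Gamma$ in $PGL_2(\qp)$ is only a small extension of the image of $\Gamma^1=(R^n[1/p])^1$. First I would analyze the quotient $\bar\Gamma/\bar\Gamma^1$, where the bars denote the images in $PGL_2(\qp)$ acting on $Geod_n(\mathcal{T})$. The reduced norm induces a homomorphism $n\colon\Gamma\to p^{\z}$ with kernel $\Gamma^1$, while the central elements $\pm p^{\z}\subseteq\Gamma$ (which become trivial in $PGL_2(\qp)$) map onto $p^{2\z}$. Hence $\bar\Gamma/\bar\Gamma^1$ injects into $p^{\z}/p^{2\z}\cong \z/2\z$, so it has order at most $2$.

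Second, I would separate into two cases according to whether an element $\delta_p\in\Gamma$ of reduced norm $p$ exists. If not, then $n(\Gamma)=p^{2\z}$, i.e.\ every element of $\Gamma$ coincides, modulo $\qp^\times$, with one in $\Gamma^1$, so $\bar\Gamma=\bar\Gamma^1$ and the two sets of representatives are literally the same. If such a $\delta_p$ does exist, then by Lemma \ref{norm +-p^n elements} every $\gamma\in\Gamma$ can be written as $\gamma=\delta_p^{\epsilon}\gamma_1\cdot p^{k}$ for some $\epsilon\in\{0,1\}$, $\gamma_1\in\Gamma^1$ and $k\in\z$, so that $\bar\Gamma=\bar\Gamma^1\sqcup \delta_p\bar\Gamma^1$.

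Third, given the (finite) set of representatives $\{x_1,\dots,x_k\}$ for $(R^n[1/p])^1\backslash Geod_n(\mathcal{T})$ output by Algorithm \ref{algorithm}, I would iterate over the pairs $(i,j)$ with $i\leq j$ and test whether $\delta_p\cdot x_i$ is $\Gamma^1$-equivalent to $x_j$. This test is exactly of the type performed inside Algorithm \ref{algorithm}: it reduces, via Lemma \ref{coset matrices2} to put $\delta_p x_i$ in standard form, to a non-emptiness test for $Hom_{\Gamma^1}(\delta_p x_i,x_j)$ via the shortest-vector computation in the lattice $\Lambda(\delta_p x_i,x_j)$. Keeping one representative from each such $\delta_p$-orbit yields a set of representatives for $\Gamma\backslash Geod_n(\mathcal{T})$.

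The main obstacle is conceptual rather than computational: one must verify the case split above, i.e.\ that the only obstruction to the two quotients agreeing is the possible presence of an element of reduced norm exactly $p$. Once this is established, the rest reduces to a \emph{finite} number of applications of the equivalence oracle already developed for Algorithm \ref{algorithm}, because the $\Gamma^1$-orbit representatives are finite in number (in fact bounded by $2h(R^n)$, cf.\ the remark after Algorithm \ref{algorithm}) and exactly one extra element $\delta_p$ needs to be applied per orbit.
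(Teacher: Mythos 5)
Your argument is correct and follows essentially the same route as the paper: the paper also reduces the problem to the dichotomy of Proposition \ref{gamma gamma 1 index} (the index $[\Gamma:\Gamma^1]$ is $1$ or $2$ according to whether an element of reduced norm $p$ exists) and, when $\delta_p$ exists, merges the finitely many $\Gamma^1$-orbits by testing $\delta_p$-translates for $\Gamma^1$-equivalence with the oracle of Algorithm \ref{algorithm} --- this is precisely Algorithm \ref{algorithm-gamma}, which the paper declares to be the proof. The justification of your case split (the norm map landing in $p^{\z}/p^{2\z}$) is the content the paper isolates in Lemma \ref{norm +-p^n elements} and Proposition \ref{gamma gamma 1 index}, so nothing is missing.
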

\noindent The proof of the above proposition is the following algorithm.\\

\begin{algorithm}[H]\label{algorithm-gamma}
	\SetAlgoVlined
	\KwIn{A coset decomposition provided by Algorithm \ref{algorithm} and the class number of $R^n$.}
	\KwOut{A list of coset representatives for $\iota_p(\Gamma) \backslash GL_2(\qp) / \qp^\times \Gamma_0(p^n\zp)$.}
	\Begin{
		Initialize queue $X$ with the coset representatives of $\Gamma \backslash B_p^\times / \qp^\times {R^n_p}^\times$;\\
		Initialize empty queue $Y$;\\
		Set $h$ as the class number of $R^n$;\\
		\If{$length(X)-h = 0$}{
			Set $Y=X$;
		}
		\Else{
			Compute $\delta_p$;\\
			\While{$length(X)-h\neq 0$}{
				\For{each $x\in X$}{
					Compute $n(x)$;\\
					\textbf{Push} $x$ into $Y$;\\
					\textbf{Pop} $x$ from $X$;\\
					\For{each $x'\in X$}{
						Compute $n(x')$;\\  \If{$n(x)/n(x')=p^{m}\in p^{\z}$}{
							\If{$\delta_p^{-M}\cdot x'$ is $\Gamma^1$-equivalent to $x$}{
								\textbf{Pop} $x'$ from $X$;}
						}
					}
				}
			}
		}
		\Return{$Y$}
	}
	\caption{Compute a coset decomposition for $\Gamma \backslash B_p^\times / \qp^\times {R^n_p}^\times$.}
\end{algorithm}
$\\$
We report here the following proposition.

\begin{proposition}\label{gamma gamma 1 index}
	The index $[\Gamma:\Gamma^1]$ is either 1 or 2. It holds $[\Gamma:\Gamma^1]=2$ if and only if $R^1$ contains an element of reduced norm $p$.
\end{proposition}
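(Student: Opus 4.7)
The plan is to analyze $[\Gamma:\Gamma^1]$ through the reduced norm map and deduce everything from the structure of its image.

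First I would set up the exact sequence given by the reduced norm. On the group $(R^1[1/p])^\times$ the reduced norm $n$ is a homomorphism into $\mathbb{Q}^\times$; since $B$ is definite the image is positive, and since $R^1[1/p]$ is a $\mathbb{Z}[1/p]$-order, units have norm in $\mathbb{Z}[1/p]^\times$ (\cite{Vigneras1980}, Lemme 4.12). Combining, $n\bigl((R^1[1/p])^\times\bigr)\subseteq p^{\mathbb{Z}}$, and the kernel of $n$ is exactly $\Gamma^1=(R^1[1/p])^1$. This identifies $(R^1[1/p])^\times / (R^1[1/p])^1$ with a subgroup of $p^{\mathbb{Z}}$.

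Next I would descend to $PGL_2(\qp)$. The center of $B^\times$ intersected with $(R^1[1/p])^\times$ is $\mathbb{Z}[1/p]^\times=\pm p^{\mathbb{Z}}$, whose reduced norm is $p^{2\mathbb{Z}}$ (because the scalar $p$ has reduced norm $p^2$, and $\pm 1$ has norm $1$). Passing to quotients modulo the center on both sides, one obtains
\[
[\Gamma:\Gamma^1]\;=\;\bigl[\,n((R^1[1/p])^\times)\,:\,p^{2\mathbb{Z}}\,\bigr],
\]
which is a subgroup of $p^{\mathbb{Z}}/p^{2\mathbb{Z}}\cong \mathbb{Z}/2\mathbb{Z}$. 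This immediately yields the dichotomy $[\Gamma:\Gamma^1]\in\{1,2\}$ and shows the index equals $2$ precisely when $n((R^1[1/p])^\times)=p^{\mathbb{Z}}$, i.e.\ precisely when there exists an element of $(R^1[1/p])^\times$ whose reduced norm has odd $p$-adic valuation. Rescaling by an appropriate power of the central scalar $p$, this is equivalent to the existence of an element of $R^1[1/p]$ of reduced norm exactly $p$.

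The remaining step, and the only non-formal part of the argument, is upgrading ``exists in $R^1[1/p]$'' to ``exists in $R^1$''. For this I would invoke Lemma \ref{norm +-p^n elements}: because $B$ is definite the set of elements of any bounded reduced norm is finite, so a finite enumeration among elements of $R^1$ of reduced norm $p$ decides whether such a $\delta_p$ exists, and the lemma guarantees that if one exists in $R^1[1/p]$ then it can already be chosen inside $R^1$. This compactness-plus-finite-enumeration step is the main (and only) obstacle beyond pure group theory; with it in hand, the ``if and only if'' characterization follows directly from the computation of the quotient $n((R^1[1/p])^\times)/p^{2\mathbb{Z}}$.
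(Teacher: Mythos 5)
Your argument is correct and follows essentially the same route as the paper: both proofs reduce the statement to the parity of the $p$-adic valuation of the reduced norm on $(R^1[1/p])^\times$, via the exact sequence $1\to (R^1[1/p])^1\to (R^1[1/p])^\times\xrightarrow{n} p^{\z}$ and the induced sign character on $\Gamma$. Your treatment is marginally more explicit in two places where the paper is terse --- the descent to $PGL_2(\qp)$ by quotienting out the central scalars $\pm p^{\z}$ (which have norm in $p^{2\z}$), and the passage from ``an element of reduced norm $p$ exists in $(R^1[1/p])^\times$'' to ``exists in $R^1$'' via Lemma \ref{norm +-p^n elements} --- but the underlying mechanism is identical.
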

\begin{proof}
	Recall that $\Gamma$ and $\Gamma^1$ are respectively the images in $PGL_2(\qp)$ of $(R^1[1/p])^\times$ and $(R^1[1/p])^1$. In particular, we have the exact sequence
	\begin{equation*}
		1 \longrightarrow (R^1[1/p])^1 \longrightarrow (R^1[1/p])^\times \overset{n}{\longrightarrow} p^{\z},
	\end{equation*}
	thus every element in $\Gamma$ can be divided into two classes: the class of elements with reduced norm $p^{2m}$ and the one of elements with reduced norms $p^{2m+1}$. Defining the character $ns:\Gamma\longrightarrow \{\pm1\}$ as $ns([\gamma])=(-1)^{n(\gamma)}$, we obtain the exact sequence
	\begin{equation*}
		1 \longrightarrow \Gamma^1 \longrightarrow \Gamma \overset{ns}{\longrightarrow} \{\pm1\}.
	\end{equation*}
	Hence $ns$ is surjective if and only if there exists an element of reduced norm $p$ in $(R^1[1/p])^\times$ or, equivalently, in $R^1$.
\end{proof}
\begin{remark}\label{BD and D remark on norm p}
    \begin{enumerate}
        \item As noticed in \cite{BertoliniDarmon2007}, Section 3.1, one can say something more in the case the Eichler order contains the ring of integers of a quadratic imaginary field. Let $K$ be such a field and let $\mathcal{O}_K$ be its ring of integers. Assume that $\ell$ is inert in $K$ and that all the primes dividing $Np$ are split. This assumption guarantees the existence of an embedding of $\mathcal{O}_K$ into the Eichler order $R^1$. In particular, $R^1$ contains an element of norm $p$ if $\mathcal{O}_K$ contains one of the ideals splitting $(p)$.
        \item In \cite{Darmon2004}, Section 9.1, one can notice that the element of norm $p$ acts changing orientation. Let us remark that this is not always the case when one considers geodesics with bigger length; see matrix $\mat{0}{1}{p^n}{0}$ in \cite{Rhodes2001}, Section 2.
    \end{enumerate}
\end{remark}

\subsection{A basis for the order}\label{basis for orders}

For the above algorithms to be effective we should provide a basis of the order $R^n$. We refer here to some algorithmic procedures for computing a basis for $R^n$.

We begin considering a maximal order; in this case, we have the following lemma.
\begin{lemma}[\cite{Pizer1980}, Proposition 5.2]\label{basis}
	
	Let $B$ be as above and suppose that it is determined by $i^2=a$ and $j^2=b$; set $k=ij$. A maximal order ${R}$ of $B$ is
	\begin{enumerate}
		\item for $a=-1$, $b=-\ell$ if $\ell\equiv 3\pmod{4}$,
		\begin{equation*}
			{R}=\langle \frac{1}{2}(1+j), \frac{1}{2}(i+k), j, k\rangle_{\z},
		\end{equation*}
		\item for $a=-2$, $b=-\ell$ if $\ell\equiv 5 \pmod{8}$,
		\begin{equation*}
			{R}=\langle \frac{1}{2}(1+j+k), \frac{1}{4}(i+2j+k), j, k\rangle_{\z},
		\end{equation*}
		\item for $a=-\ell$, $b=-q$ if $\ell\equiv 1\pmod{8}$ where $q$ is a prime such that $\left(\frac{\ell}{q}\right)=-1$, $q\equiv 3\pmod{4}$ and $s$ is an integer with $s^2=-\ell \pmod{q}$ and $s\equiv -q \pmod{}\ell$,
		\begin{equation*}
			R=\langle \frac{1}{2}(1+j), \frac{1}{2}(i+k), k, \frac{1}{\ell q}\left(\ell j+(s+q)k \right)\rangle_{\z}.
		\end{equation*}
	\end{enumerate} 
\end{lemma}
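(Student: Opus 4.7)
The plan is to verify, case by case, that the proposed lattice $R$ satisfies the three defining properties of a maximal order in $B$: namely, that $R$ is a subring of $B$ containing $\mathbb{Z}\langle 1 \rangle$, that it is free of rank $4$ over $\mathbb{Z}$, and that its reduced discriminant equals the reduced discriminant of $B$. Combined with the well-known criterion (see \cite{Vigneras1980}, Corollaire III.5.3) that an order of $B$ is maximal if and only if its reduced discriminant equals $\operatorname{disc}(B)$, this will conclude the proof. Since $B$ is ramified precisely at $\ell$ and $\infty$, we have $\operatorname{disc}(B)=\ell$, so we are reduced to checking, in each of the three cases:
\begin{enumerate}
    \item[(i)] $B=\bigl(\tfrac{a,b}{\mathbb{Q}}\bigr)$ is indeed ramified exactly at $\{\ell,\infty\}$;
    \item[(ii)] the proposed $\mathbb{Z}$-lattice $R$ is closed under multiplication;
    \item[(iii)] the reduced discriminant of $R$ equals $\ell$.
\end{enumerate}

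For step (i), I would compute the local Hilbert symbols $(a,b)_v$ at every place $v$. Ramification at $\infty$ is immediate since both $a$ and $b$ are negative. Ramification at $\ell$ and unramification elsewhere follow from direct computation combined with quadratic reciprocity, using the hypothesis on $\ell$ modulo $4$ or $8$. For instance, in case (1) one uses that $\ell\equiv 3\pmod 4$ implies $\bigl(\tfrac{-1}{\ell}\bigr)=-1$, giving $(-1,-\ell)_\ell=-1$, while unramification at $2$ follows because $-1$ is a norm from $\mathbb{Q}_2(\sqrt{-\ell})$. Case (3) is where the auxiliary prime $q$ and integer $s$ enter: the conditions $\bigl(\tfrac{\ell}{q}\bigr)=-1$, $q\equiv 3\pmod 4$, $s^2\equiv -\ell\pmod q$, and $s\equiv -q\pmod \ell$ are designed precisely to make $(-\ell,-q)_v=1$ at all finite $v\neq \ell$ and $(-\ell,-q)_\ell=-1$.

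For step (ii), I would tabulate the products of the given basis elements using only the relations $i^2=a$, $j^2=b$, $ij=-ji=k$, and verify that every product reappears as a $\mathbb{Z}$-linear combination of the basis. The halved and the $1/(\ell q)$-scaled basis elements are tailored so that the potentially fractional contributions cancel: in case (3), for example, the square of $\tfrac{1}{\ell q}\bigl(\ell j + (s+q)k\bigr)$ involves $s^2+\ell q+2sq$, and the congruences $s^2\equiv -\ell\pmod q$ and $s\equiv -q\pmod \ell$ force the denominators to disappear. Step (iii) is then a direct determinantal computation of the Gram matrix $\bigl(\operatorname{tr}(e_r\bar e_s)\bigr)_{r,s}$ for the given basis $\{e_1,e_2,e_3,e_4\}$: its determinant yields $\operatorname{disc}(R)^2=\ell^2$, hence $\operatorname{disc}(R)=\ell$, and maximality follows.

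The main obstacle is unquestionably case (3), where all three verifications are entangled with the delicate arithmetic of the auxiliary prime $q$ and the integer $s$. Their existence rests on the Chinese Remainder Theorem together with the quadratic residue condition $\bigl(\tfrac{\ell}{q}\bigr)=-1$, and ensuring that the element $\tfrac{1}{\ell q}\bigl(\ell j + (s+q)k\bigr)$ is $B$-integral and that it completes the other three basis elements to a genuine order of reduced discriminant $\ell$ requires tracking the congruence data carefully through both the multiplication table and the Gram determinant. The cleanest way to handle this is probably to check integrality and the correct discriminant \emph{locally} at $\ell$ and at $q$, since away from these primes the lattice is manifestly contained in a maximal order; then the global statement follows by the local-to-global principle for maximality of orders.
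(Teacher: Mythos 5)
The paper gives no proof of this lemma: it is quoted verbatim from \cite{Pizer1980}, Proposition 5.2, so there is nothing internal to compare your argument against. Your proposed verification is nonetheless correct and is essentially the standard (and Pizer's own) route: check via Hilbert symbols and quadratic reciprocity that $\bigl(\tfrac{a,b}{\q}\bigr)$ is ramified exactly at $\{\ell,\infty\}$, check that each lattice is a ring containing $1$, and compute the reduced discriminant, invoking the criterion that an order is maximal if and only if its reduced discriminant equals $\operatorname{disc}(B)=\ell$; your observation that in case (3) the congruences $s^2\equiv-\ell\pmod q$ and $s\equiv -q\pmod\ell$ are exactly what make $\tfrac{1}{\ell q}(\ell j+(s+q)k)$ integral (its reduced norm is $(\ell+(s+q)^2)/\ell q$) is the right key point. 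The only minor addition worth making is that the existence of the auxiliary prime $q$ in case (3) requires Dirichlet's theorem on primes in arithmetic progressions, not just the Chinese Remainder Theorem, though the lemma as stated takes $q$ as given.
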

\noindent In \cite{KirschmerVoight2014} (end of page 5), it is described how one can always compute a $\z$-basis for a given Eichler order in $B$. Up to conjugation, we can consider the Eichler order $R_N\subseteq R$ of level $N$ to be, locally, $\Gamma_0(N\z_q)$. Factoring $N$ into prime factors, one can compute in probabilistic polynomial time an embedding of ${R}\hookrightarrow M_2(\z_p)$ for $p|N$, as explained in \cite{Voight2013}. Once the embedding has been computed, it is possible to realize the order $R_{p^{v_p(N)}}$ and produce a $\z_p$-basis. Taking the intersection $R_N=\cap_{p|N}R_{p^{v_p(N)}}$, one computes (see \emph{loc.cit.}) a $\z$-basis for the Eichler order $R_N$. 
Another interesting approach is considered in \cite{Wiebe2018}, where $\z$-basis of non maximal orders are provided and computed.

We end this section remarking that one needs to only compute once the Eichler order with level $Np$ to address the $\Gamma$-structure, as the higher powers at $p$ are handled by the length of the geodesics.


\section{The triple product $p$-adic $L$-function}\label{triple product section}

The aim of this section is to apply the algorithmic procedures developed in Section \ref{Quaternionic double quotients and geodesics on the Bruhat--Tits tree} to the triple product balanced $p$-adic $L$-function presented in \cite{Hsieh2021}; we try to keep the notation consistent with \emph{loc.cit.} and denote this $p$-adic $L$-function by $\mathcal{L}_{F_\infty}^{bal}$, for $F_\infty$ a suitable triple of Hida families. More precisely, in this section we realize the so-called \emph{theta-element} in \emph{loc.cit.} as a finite sum of functions on the finite-length geodesics on the Bruhat--Tits tree. This produces an effective method for the approximation of the limit of the triple product $p$-adic $L$-function at $(2,1,1)$.

We recall, as briefly as possible, the setting of \cite{Hsieh2021}. We are interested in considering weight-2 specializations in the formulas, thus we report only what is needed for this purpose, leaving out great part of the general construction; we refer to the original paper for all the details.

\subsection{The setting and the hypotheses}\label{hypotheses}

As before, let $p$ be an odd prime which, for this section, is assumed to be greater equal than 5. We consider three $p$-adic Hida families, ${f_\infty}$, ${g_\infty}$ and ${h_\infty}$, with, respectively, tame level $N_1$, $N_2$ and $N_3$, and tame character $\psi_1$, $\psi_2$ and $\psi_3$. We denote the Iwasawa algebra by $\Lambda=\zp[[\zp^\times]]$, and define $\mathbf{I}_i$, for $i=1,2,3$, to be the finite flat normal domain over $\Lambda$ which defines the coefficients algebra of the corresponding family.
As in Section \ref{weight space}, an arithmetic point in the weight space is induced by a morphism $Q\in \textrm{Spec}(\mathbf{I}_i)(\overline{\qp})$ such that the restriction $Q_{|_{\zp^\times}}:{\zp^\times}\rightarrow \Lambda^\times \xrightarrow[]{Q} \overline{\qp}^\times$ is of the form $Q(x)=x^{k_Q}\cdot \varepsilon_Q(x)$ for $k_Q\in \z_{\geq 2}$ and $\varepsilon_Q:{\zp^\times}\longrightarrow \overline{\qp}^\times$ a finite order character. We denote the exponent of its conductor by $c\left(\varepsilon_Q\right)$. For $N=\lcm\{N_1,N_2,N_3\}$ and for any $\underline{Q}=(Q_1,Q_2,Q_3)$ triple of arithmetic points, we denote
\begin{equation*}
	\Sigma^-=\left\{q \textrm{ prime dividing } N\mid\varepsilon_q\left( \Pi_{\underline{Q}} \otimes\chi_{\underline{Q}}^{-1} 	\right)=-1\right\},
\end{equation*}
where $\varepsilon_q$ is the local epsilon factor at the prime $q$. Here, $\Pi_{\underline{Q}}=\pi_{{f}_{Q_1}}\otimes\pi_{{g}_{Q_2}}\otimes\pi_{{h}_{Q_3}}$ is the tensor product of the three automorphic representations associated with the specializations ${f}_{Q_1}$, ${g}_{Q_2}$ and ${h}_{Q_3}$ and $\chi_{\underline{Q}}=\prod_{i=1}^{3}\psi_i\varepsilon_{Q_i}$. We remark that $\Sigma^-$ does not depend on the specific triple of arithmetic points.

We recall briefly all the hypotheses that the Hida families must satisfy:
\begin{enumerate}
	\item[\hla{(sf)}] $\gcd(N_1,N_2,N_3)$ is square-free;	
	\item[\hla{(ev)}]	$\psi_1\psi_2\psi_3=\omega^{2a}$ for $a\in\z$ and $\omega$ the Teichm\"uller character;
	\item[\hla{(CR,$\Sigma^-$)}]	The residual Galois representation $\overline{\rho}_{{f_\infty}}:\mathsf{Gal}(\overline{\q}/\q)\longrightarrow GL_2(\overline{\f}_p)$ (and the same holds for ${g_\infty}$ and ${h_\infty}$) is such that
	\begin{enumerate}
		\item[(i)]	$\overline{\rho}_{{f_\infty}}$ is absolutely irreducible;
		\item[(ii)]	$\overline{\rho}_{{f_\infty}}$ is $p$-distinguished;
		\item[(iii)]	if $\ell\in\Sigma^{-}$ and $\ell\equiv 1 \pmod{p}$, then $\overline{\rho}_{{f_\infty}}$ is ramified at $\ell$;
	\end{enumerate}
	\item[\hla{(odd)}]  $\#\Sigma^-$ is odd.
\end{enumerate}
One crucial hypothesis is the hypothesis of \emph{tame ramification} at the primes of $\Sigma^{-}$, that is
\begin{enumerate}
	\item[\hla{(TR)}] for $d=\prod_{q\in \Sigma^-}q$ and $N=\lcm(N_1,N_2,N_3)$, it must hold that $\left(d,N/d\right)=1$.
\end{enumerate}
In order to explicitly produce test vectors, in \cite{Hsieh2021} it is required to introduce a permutation of the triple of families. We report the two equivalent (see Remark 3.1 in \emph{loc.cit.}) conditions:

\begin{enumerate}
	\item[\hla{(P)}]	There exists a triple of classical points $(\kappa_1,\kappa_2,\kappa_3)$ such that for each prime $q|N$, there exists a rearrangement $\{f_1,f_2,f_3\}$ of $\{f_{\kappa_1},g_{\kappa_2},h_{\kappa_3}\}$ such that
	\begin{enumerate}[label=(\roman*),font=\itshape]
		\item	the conductors at $q$ satisfy $c_q(\pi_{f_1})\leq \min\{c_q(\pi_{f_2}),c_q(\pi_{f_3})\}$;
		\item	the local components $\pi_{f_1,q}$ and $\pi_{f_3,q}$ are minimal;
		\item	either $\pi_{f_3,q}$ is a principal series or $\pi_{f_2,q}$ and $\pi_{f_3,q}$ are both discrete series.
	\end{enumerate}
	\item[\hla{(P$\infty$)}]	For each triple of classical points $(\kappa_1,\kappa_2,\kappa_3)$ and for each prime $q|N$, there exists a rearrangement $\{f_1,f_2,f_3\}$ of $\{f_{\kappa_1},g_{\kappa_2},h_{\kappa_3}\}$ such that
	\begin{enumerate}[label=(\roman*),font=\itshape]
		\item	$c_q(\pi_{f_1})\leq \min\{c_q(\pi_{f_2}),c_q(\pi_{f_3})\}$;
		\item	the local components $\pi_{f_1,q}$ and $\pi_{f_3,q}$ are minimal;
		\item either $\pi_{f_3,q}$ is a principal series or $\pi_{f_2,q}$ and $\pi_{f_3,q}$ are both discrete series.
	\end{enumerate}
\end{enumerate}
For completeness, we report here the last hypothesis needed to compute local zeta-integrals at $p$.
\begin{enumerate}
	\item[\hla{(Hb')}] Either $\pi_{f_3,p}$ is a principal series or all $\pi_{f_1,p}$, $\pi_{f_2,p}$, and $\pi_{f_3,p}$ are special.
\end{enumerate}

\begin{remark}
	As stated in Remark 6.2 of \cite{Hsieh2021}, there always exist $\chi_1$, $\chi_2$ and $\chi_3$ Dirichlet characters modulo $M$, with $M^2|N$ such that $\chi_1\chi_2\chi_3=1$ and $(\pi_{f_{\kappa_1}}\otimes\chi_1,\pi_{g_{\kappa_2}}\otimes\chi_2,\pi_{h_{\kappa_3}}\otimes\chi_3)$ satisfies \emph{\hrf{(P)}}. It is not hard to note that this condition fails if we relax condition \emph{\hrf{(TR)}} and allow powers strictly higher than 2 at the primes of $\Sigma^{-}$.
\end{remark}

\subsection{Test vectors and the theta-elements}

Let $B$, $R$ and $R^n$ be as in Section \ref{p-adic double-coset spaces}. Moreover, we fix a choice of field embeddings $\overline{\q}\hookrightarrow\overline{\qp}\hookrightarrow \cc$ and a compatible isomorphism $i:\cc_p\cong \cc$.  Let $\left(\begin{smallmatrix}
	a & b\\ c& d
\end{smallmatrix}\right)\in GL_2(\A_{\q,f}^{(\ell)})$ act on $x\in\widehat{B}^\times$ by
\begin{equation*}
	x\left(\begin{smallmatrix}
		a & b\\ c& d
	\end{smallmatrix}\right)=x\cdot \iota^{-1}\left(\left(\begin{smallmatrix}
		a & b\\ c& d
	\end{smallmatrix}\right)\right),
\end{equation*}
for $\iota$ the induced isomorphism $GL_2(\A_{\q,f}^{(\ell)})\cong (B\otimes\A_{\q,f}^{(\ell)})^\times$.
Even though the formula presented in \cite{Hsieh2021} can deal with generic arithmetic specializations, we restrict our analysis to the one of weight 2. This situation grants easier expressions, without invalidating the study of the limit point $(2,1,1)$.
\begin{assumption}[Choice of Hida families]\label{choice of Hida families}
	We consider the triple of families ${F_\infty}=({g_\infty},{f_\infty},{h_\infty})$, where ${f_\infty}$ is passing through a modular elliptic curve. We aim to let the family ${f_\infty}$ to be as free as possible, thus the families ${g_\infty}$ and ${h_\infty}$ should be interpreted as auxiliary parameters. More precisely we take:
	\begin{enumerate}[label=(\alph*)]
		\item ${f_\infty}$ is the unique Hida family\commenttt{(see \cite{Wiles1988},\cite{hida_1993})} associated with $f\in S_2^{new}(\Gamma_0(N_1\ell p),\q)$, a twist-minimal primitive newform corresponding to an elliptic curve $E/\q$, which is ordinary at $p$. In particular, the family has tame level $N_1\ell$ with trivial tame character;
		\item ${g_\infty}$ and ${h_\infty}$ are primitive $p$-adic Hida families of tame level $N_2\ell$ with tame character $\psi$ and $\psi^{-1}$ respectively. We moreover suppose that $\psi$ and $\psi^{-1}$ are both primitive of conductor $N_2$.
	\end{enumerate}
	In the end, we assume that $N_2$ is square-free, $N_2|N_1$ and that all the three families satisfy condition \emph{\hrf{(CR,$\Sigma^-$)}}.
\end{assumption}
\begin{remark}
	\begin{enumerate}[label=(\alph*)]
		\item The permutation chosen in the definition of $F_\infty$ guarantees that the triple satisfies \emph{\hrf{(P)}}, without any auxiliary twist. 
		\item One can also consider $\psi$ of smaller conductor $N_2'\mid N_2$ and impose the condition on $f$ to have supercuspidal type at the primes dividing $N_2'$. This ensures that the set of ramified primes contains just $\ell$ (see \cite{P1990}).
		\item The hypothesis \emph{\hrf{(Hb')}} is in place under our choice of specializations.
	\end{enumerate}
\end{remark}
Let $F_\infty^{B}=(g_\infty^B,f_\infty^B,h_\infty^B)$ be the triple of \emph{primitive Jacquet--Langlands lifts} as determined in Theorem 4.5 of \cite{Hsieh2021}. 
\begin{notation}\label{Eichler notation}
	We fix the orders $R^n$ and $R_2^n$ to be nested Eichler orders of level, respectively, $N_1\ell p^n$ and $N_2\ell p^n$ such that $R\supset R^n_2\supset R^n$ for each $n\geq 0$.
\end{notation}
We fix a sequence of points $(2,\varepsilon)$ on the weight space, as in Section \ref{weight space}, approaching a weight-1 point with trivial character; we assume that $\varepsilon$ is primitive of conductor $p^n$. Considering the notation in Section \ref{Quaternionic modular forms of weight $2$}, at each point of the form $((2,\varepsilon),2,(2,\varepsilon))$ we have the quaternionic test vectors
\begin{equation*}
	\left(i(g^B_{(2,\varepsilon)})\otimes\varepsilon^{-1}_{\A},i(f^B_2),i(h^B_{(2,\varepsilon)})\right)\in S_2(R^n,\widetilde{\psi\varepsilon^{-1}})\times S_2(R^n)\times S_2(R^n,\widetilde{\psi^{-1}\varepsilon}),
\end{equation*}
where $\varepsilon^{-1}_{\A}$ is the ad\`{e}lization of the $p$-adic character $\varepsilon^{-1}$, which is unramified outside $p$. We set ${g^B_{(2,\varepsilon)}}'=g^B_{(2,\varepsilon)}\otimes\varepsilon^{-1}_{\A}$ and ${F_\infty^{B}}'=({g_\infty^B}'=g_\infty^B\otimes \varepsilon_\A^{-1},f_\infty^B,h_\infty^B)$. Following Section 4.6 of \cite{Hsieh2021}, we consider the theta-element $\Theta_{{F_\infty^{B}}'}$ associated with the triple ${F_\infty^{B}}'$.
\begin{remark}\label{theta e L}
	\begin{enumerate}[label=(\alph*)]
		\item The specialization point $Q=(2,\varepsilon)$ comes from an element in $\textrm{Spec}(\mathbf{I}_i)(\overline{\qp})$ and hence, \emph{a priori}, we should take into account the value of $\varepsilon_{\A}(p)=Q(p)$, although we can fix it to be $1$.
		\item The theta-element $\Theta_{{F_\infty^{B}}'}$ is the central component of the $p$-adic triple product $L$-function $\mathcal{L}_{F_\infty}^{bal}$ constructed in Section 4 of \cite{Hsieh2021}. In our situation, the difference between these two objects is give by two factors: the constant $2^{-5/2}/\sqrt{N}$ and an element in $Frac\left(\widehat{\bigotimes}_{i=1,2,3}\mathbf{I}_i\right)^\times$ (in the notation of  \emph{loc.cit.} a square root of the fudge factor). In particular, the zeros of $\mathcal{L}_{F_\infty}^{bal}$ are encoded by the behavior of $\Theta_{{F_\infty^{B}}'}$.
	\end{enumerate}
\end{remark}
By some elementary but rather tedious observations, it is not difficult to note that, up to considering the finite sum rather than the integral, Proposition $4.9$ in \cite{Hsieh2021} reads as
\begin{multline*}
	\Theta_{{F_\infty^{B}}'}(((2,\varepsilon),2,(2,\varepsilon)))=\frac{\varepsilon^{-1}_{\A}\left(p^{n}\right)\cdot p^{2n}(1-\frac{1}{p})}{a_p(f)^n a_p\left({g^B_{(2,\varepsilon)}}'\right)^n a_p\left({h^B_{(2,\varepsilon)}}\right)^n}\cdot\\ 
	\cdot \sum_{x\in B^\times \backslash \widehat{B}^\times / \widehat{R^n}^\times } \frac{1}{\#\Gamma_{x}} i\left({g^B_{(2,\varepsilon)}}'\left(x\left(\begin{smallmatrix}
		1 & p^{-n}\\
		0 & 1
	\end{smallmatrix}\right)\right)\right)\cdot i\left(f^{B}\left(x\right)\right)
	i\left({h^B_{(2,\varepsilon)}}\left(x\left(\begin{smallmatrix}
		0 & p^{-n}\\
		-p^{n} & 0
	\end{smallmatrix}\right)\right)\right).	
\end{multline*}
In the above formula we recall that $\Gamma_{x}=\left(B^\times \cap x\widehat{R^n}^\times x^{-1}\right)\q^\times/\q^\times$.\\

We can move the quaternionic modular forms to their exact level considering the two surjective reduction maps
\begin{align*}
	\phi_{n}:B^\times \backslash \widehat{B}^\times / \widehat{R^n}^\times\longrightarrow B^\times \backslash \widehat{B}^\times / \widehat{R^n_2}^\times && \textrm{ and } && \phi^1_{n}:B^\times \backslash \widehat{B}^\times / \widehat{R^n}^\times\longrightarrow B^\times \backslash \widehat{B}^\times / \widehat{R^1}^\times.
\end{align*}		
Lemma \ref{p-adic model} allows us to identify the triple of functions
\begin{equation*}
	\left(i\left({g^B_{(2,\varepsilon)}}'\left(\phi_{n}\left(-\right)\right)\right),i\left(f^{B}\left(\phi^1_{n}(-)\right)\right), i\left({h^B_{(2,\varepsilon)}}\left(\phi_{n}\left(-\right)\right)\right) \right),
\end{equation*}
with a triple of functions on $GL_2(\qp)$ which we denote by $\left(G^n(-),F^1(-),H^n(-)\right).$
\begin{proposition}\label{Theta element}
	With the notation of Section \ref{geodesics}, let $\underline{Q}$ be the triple of points $((2,\varepsilon),2,(2,\varepsilon))$. It holds that
	\begin{multline*}
		\Theta_{{F_\infty^{B}}'}(\underline{Q})=\frac{(1-p^{-1})}{a_p(f)^n  }\cdot \sum_{e\in \left(R^1\left[1/p\right]\right)^\times\backslash \mathcal{E}(\mathcal{T})} \frac{F^1(e)}{\# Stab_{\left(R^1\left[1/p\right]\right)^\times}(e)}\cdot\\ 
		\cdot \left(\frac{\varepsilon^{-1}_{\A}\left(p^{n}\right)\cdot p^{2n}}{a_p\left({g^B_{(2,\varepsilon)}}'\right)^n a_p\left({h^B_{(2,\varepsilon)}}\right)^n}\right)
		\sum_{g \in Geod_n(\mathcal{T})(e)} G^n(g\left(\begin{smallmatrix}
			1 & p^{-n}\\
			0 & 1
		\end{smallmatrix}\right))\cdot H^n(g\left(\begin{smallmatrix}
			0 & p^{-n}\\
			-p^{n} & 0
		\end{smallmatrix}\right)).
	\end{multline*}
\end{proposition}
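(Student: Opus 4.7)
The plan is to transport Hsieh's formula for $\Theta_{{F_\infty^B}'}(((2,\varepsilon),2,(2,\varepsilon)))$ (the displayed expression immediately preceding the statement) through the chain of identifications built in the previous sections, and then reorganize the resulting sum according to the initial-edge projection on the Bruhat--Tits tree. First I would apply Lemma~\ref{p-adic model} together with Proposition~\ref{left inv order} (which ensures $\Gamma_n=\Gamma$ for every $n\geq 1$) to rewrite the summation set $B^\times\backslash\widehat{B}^\times/\widehat{R^n}^\times$ as $\Gamma\backslash GL_2(\qp)/\qp^\times\Gamma_0(p^n\zp)$, and then use the isomorphism \eqref{geodesics iso} to identify this with $\Gamma\backslash Geod_n(\mathcal{T})$. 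Under this translation, by the very construction of $G^n$, $F^1$, $H^n$ recalled just before the statement, the three factors of the integrand become $G^n(g\mat{1}{p^{-n}}{0}{1})$, $F^1(g)$, and $H^n(g\mat{0}{p^{-n}}{-p^n}{0})$, and the weight $1/\#\Gamma_x$ becomes the reciprocal of the size of the $\Gamma$-stabilizer of the corresponding geodesic class.

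Next, I would exploit that, by Assumption~\ref{choice of Hida families}(a), $f^B$ has level $R^1$; hence $F^1$ descends through the iterated projection $\alpha_n\circ\cdots\circ\alpha_2\colon Geod_n(\mathcal{T})\to\mathcal{E}(\mathcal{T})$ of Lemma~\ref{proj maps}, and so depends only on the initial edge of each length-$n$ geodesic. This $\Gamma$-equivariance lets one split the sum over $\Gamma\backslash Geod_n(\mathcal{T})$ into an outer sum over edge classes $[e]\in\Gamma\backslash\mathcal{E}(\mathcal{T})$ and, for each class, an inner sum indexed by length-$n$ geodesics with initial edge a fixed representative $e$, taken modulo $Stab_\Gamma(e)$. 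The outer index set coincides with the $(R^1[1/p])^\times\backslash\mathcal{E}(\mathcal{T})$ appearing in the statement once one identifies the action of $(R^1[1/p])^\times$ through $\iota_p$ with that of $\Gamma$.

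The third step handles the inner sum via orbit counting. For a length-$n$ geodesic $g$ above $e$, $Stab_\Gamma(g)=Stab_\Gamma(e)\cap Stab_\Gamma(t(g))$ is a finite subgroup of $Stab_\Gamma(e)$, so the finite group $Stab_\Gamma(e)$ acts on the finite fiber $Geod_n(\mathcal{T})(e)$, and orbit--stabilizer gives
\begin{equation*}
   \sum_{[g]\in Stab_\Gamma(e)\backslash Geod_n(\mathcal{T})(e)}\frac{\Phi(g)}{\#Stab_{Stab_\Gamma(e)}(g)}=\frac{1}{\#Stab_\Gamma(e)}\sum_{g\in Geod_n(\mathcal{T})(e)}\Phi(g),
\end{equation*}
for any $Stab_\Gamma(e)$-invariant $\Phi$. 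Applied to $\Phi(g)=G^n(g\mat{1}{p^{-n}}{0}{1})\cdot H^n(g\mat{0}{p^{-n}}{-p^n}{0})$, which is indeed invariant since $G^n$ and $H^n$ are quaternionic modular forms of level $R^n$, this reproduces the inner sum in the target formula and assembles the two reorganizations into the claimed identity.

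The main obstacle, in my view, is not the combinatorial manipulation itself but the consistent bookkeeping of the level structures and the centres. One must check that ${g^B_{(2,\varepsilon)}}'=g^B_{(2,\varepsilon)}\otimes\varepsilon_{\A}^{-1}$ is genuinely of level $R^n$ in its quaternionic realization, so that $G^n$ is a well-defined function on $\Gamma\backslash Geod_n(\mathcal{T})$, and that the passage between Hsieh's stabilizer $\Gamma_x=(B^\times\cap x\widehat{R^n}^\times x^{-1})\q^\times/\q^\times$ and the $Stab_{(R^1[1/p])^\times}(e)$ appearing in the statement is carried out correctly (taking into account how the centre $\q^\times$ and the scalars $\pm p^{\z}$ inside $(R^1[1/p])^\times$ intervene, so that the ratio of the two stabilizers is genuinely trivial modulo the centre). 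Modulo these centre and level verifications, every step above is a direct consequence of Lemmas~\ref{p-adic model} and \ref{proj maps} together with the $(p+1)p^{n-1}$ combinatorics of the tree.
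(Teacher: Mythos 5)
Your proposal is correct and follows essentially the same route as the paper: Lemma \ref{p-adic model} and Proposition \ref{left inv order} to pass to $\Gamma\backslash Geod_n(\mathcal{T})$, then regrouping by the initial edge using that $F^1$ factors through $\mathcal{E}(\mathcal{T})$. Your orbit--stabilizer computation in the third step is precisely the unfolding the paper delegates to equation (2.2) of the cited reference on Shimura curve periods, so you have in fact supplied more detail there than the printed proof does.
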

\begin{proof}
	We begin noting that $\Gamma_{x}=(B^\times \cap x \widehat{R^n}^\times x^{-1})\q^\times/\q^\times$ corresponds, by Lemma \ref{p-adic model}, to
	\begin{equation*}
		\Gamma_{p,x_p}=\left(\left({R^n}\left[1/p\right]\right)^\times\cap x_p \Gamma_0(p^{n}\zp) x_p^{-1}\right)\q^\times/\q^\times=\left(\left({R^n}\left[1/p\right]\right)^\times\cap x_p \Gamma_0(p^{n}\zp) x_p^{-1}\right)\qp^\times/\qp^\times.
	\end{equation*}
	The theta-element becomes
	\begin{multline*}
		\Theta_{{F_\infty^{B}}'}(\underline{Q})=\frac{\varepsilon^{-1}\left(p^{n}\right)\cdot p^{2n}(1-p^{-1})}{a_p(f)^{n} a_p\left({g^B_{(2,\varepsilon)}}'\right)^{n} a_p\left(h^{B}_{(2,\varepsilon)}\right)^{n}}\cdot\\ 
		\cdot \sum_{x\in\iota_p\left({R^n}\left[1/p\right]^\times\right)\backslash GL_2(\qp) / \qp^\times \Gamma_0(p^{n}\zp )} \frac{1}{\#\Gamma_{p,x_p}}  F^1(x_p)\cdot  G^n(x_p\left(\begin{smallmatrix}
			1 & p^{-n}\\
			0 & 1
		\end{smallmatrix}\right))\cdot H^n(x_p\left(\begin{smallmatrix}
			0 & p^{-n}\\
			-p^{n} & 0
		\end{smallmatrix}\right)),
	\end{multline*}
	and thus, under the identification with geodesics on the Bruhat--Tits tree,
	\begin{multline*}
		\Theta_{{F_\infty^{B}}'}(\underline{Q})=\frac{\varepsilon^{-1}\left(p^{n}\right)\cdot p^{2n}(1-p^{-1})}{a_p(f)^{n} a_p\left({g^B_{(2,\varepsilon)}}'\right)^{n} a_p\left(h^{B}_{(2,\varepsilon)}\right)^{n}}\cdot\\ 
		\cdot \sum_{g\in \iota_p\left({R^n}\left[1/p\right]^\times\right)\backslash Geod_n(\mathcal{T})} \frac{1}{\# Stab_{\left({R^n}\left[1/p\right]\right)^\times}(g)}  F^1(g)\cdot  G^n(g\left(\begin{smallmatrix}
			1 & p^{-n}\\
			0 & 1
		\end{smallmatrix}\right))\cdot H^n(g\left(\begin{smallmatrix}
			0 & p^{-n}\\
			-p^{n} & 0
		\end{smallmatrix}\right)),
	\end{multline*}
	where $Stab_{\left({R^n}\left[1/p\right]\right)^\times}(g)$ corresponds to $\Gamma_{p,x_p}$. In the above equation we have identified the triple $(F^1(-),G^n(-),H^n(-))$ as a triple of functions on the geodesic of the Bruhat--Tits tree as in Section \ref{p-adic model}. In order to ease the notation, we drop again the isomorphism $\iota_p$ and write $\left({R^n}\left[1/p\right]\right)^\times$ for the corresponding subgroup in $GL_2(\qp)$. Since the function $F^1$ is naturally a function on the edges of the Bruhat--Tits tree (it can indeed be identified with the harmonic cocycle associated with $f$) we can proceed similarly to equation $(2.2)$ of \cite{Rhodes2001} and applying Lemma \ref{left inv order} obtain
	\begin{multline*}
		\Theta_{{F_\infty^{B}}'}(\underline{Q})=\frac{\varepsilon^{-1}\left(p^{n}\right)\cdot p^{2n}(1-p^{-1})}{a_p(f)^{n} a_p\left({g^B_{(2,\varepsilon)}}'\right)^{n} a_p\left(h^{B}_{(2,\varepsilon)}\right)^{n}}\cdot\\ 
		\cdot \sum_{e\in \left({R^n}\left[1/p\right]\right)^\times\backslash \mathcal{E}(\mathcal{T})} \frac{F^1(e)}{\# Stab_{\left({R^n}\left[1/p\right]\right)^\times}(e)}  
		\sum_{g \in Geod_n(\mathcal{T})(e)} G^n(g\left(\begin{smallmatrix}
			1 & p^{-n}\\
			0 & 1
		\end{smallmatrix}\right))\cdot H^n(g\left(\begin{smallmatrix}
			0 & p^{-n}\\
			-p^{n} & 0
		\end{smallmatrix}\right)).
	\end{multline*}
\end{proof}
We are now ready to apply the algorithms developed in Sections \ref{the algorithm} and \ref{the second algorithm}, and obtain the following theorem.
\begin{theorem}\label{theorem approximation L}
	The value at $(2,1,1)$ of the balanced $p$-adic $L$-function associated with the triple of families $F_\infty$  is, up to an element in $Frac\left(\widehat{\bigotimes}_{i=1,2,3}\mathbf{I}_i\right)^\times$,
	\begin{align*}
		\mathcal{L}^{bal}_{F_\infty}(2,1,1)\overset{\cdot}{\equiv} \lim\limits_{\varepsilon\rightarrow 1}  \Theta_{{F_\infty^{B}}'}((2,\varepsilon),2,(2,\varepsilon)).
	\end{align*}
	The limit $\lim\limits_{\varepsilon\rightarrow 1} \Theta_{{F_\infty^{B}}'}((2,\varepsilon),2,(2,\varepsilon))$ can be algorithmically approximated with a given $p$-adic precision, \emph{i.e.} for any $\varepsilon$ of finite conductor we can compute $\Theta_{{F_\infty^{B}}'}((2,\varepsilon),2,(2,\varepsilon))$ with a given $p$-adic precision.
\end{theorem}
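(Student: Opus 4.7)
The plan is to split the theorem into its two distinct assertions: the identification of $\mathcal{L}^{bal}_{F_\infty}(2,1,1)$ with the limit of theta-elements (up to a unit in the fraction field of the triple Iwasawa algebra), and the effective computability of the truncated limit. The first is essentially a bookkeeping step extracting what has already been packaged into Hsieh's construction; the second is an assembly of the finite-sum formula of Proposition \ref{Theta element} with the algorithms and routines developed in Sections \ref{the algorithm}, \ref{the second algorithm}, and \ref{brandt matrices and explicit quaternionic modular forms}.

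For the identity, I would argue as follows. By Remark \ref{theta e L}(b), the balanced triple product $p$-adic $L$-function $\mathcal{L}^{bal}_{F_\infty}$ differs from the theta-element $\Theta_{{F_\infty^{B}}'}$ only by a constant and by a factor lying in $Frac\bigl(\widehat{\bigotimes}_{i=1,2,3}\mathbf{I}_i\bigr)^{\times}$, so on the locus where this factor is regular and non-vanishing the two coincide up to a unit. Since $\mathcal{L}^{bal}_{F_\infty}$ lies in the Iwasawa algebra and hence defines a continuous function on the whole triple weight space, its value at the out-of-balanced-region point $(2,1,1)$ is obtained as a genuine limit along any sequence of classical arithmetic points converging to $(2,1,1)$. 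The sequences $((2,\varepsilon),2,(2,\varepsilon))$ with $\varepsilon$ primitive of $p$-power conductor provide exactly such an approximation, by the density statement of Remark \ref{Density arithm}; passing to the limit on the two sides of the relation between $\Theta_{{F_\infty^{B}}'}$ and $\mathcal{L}^{bal}_{F_\infty}$ yields the asserted congruence up to a unit.

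For the algorithmic part, I would invoke the closed-form expression of Proposition \ref{Theta element} and observe that every ingredient appearing in it is computable in finitely many steps. Representatives for the $\Gamma$-orbits of $\mathcal{E}(\mathcal{T})$ and, for each such edge, representatives for the length-$n$ geodesics originating from it are produced by Algorithm \ref{algorithm} combined with Algorithm \ref{algorithm-gamma}, using the explicit matrix normal forms of Lemma \ref{coset matrices2} together with a $\z$-basis of $R^n$ obtained as in Section \ref{basis for orders}. The stabilizer cardinalities $\#\mathrm{Stab}_{(R^1[1/p])^\times}(e)$ are byproducts of Algorithm \ref{algorithm-gamma}. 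The quaternionic modular forms $F^1$, $G^n$, $H^n$ are realized via Routine \ref{alg routine quaternionic mod forms}: one computes Brandt matrices of sufficiently many indices (up to the Sturm bound), simultaneously diagonalizes them, and extracts the eigencolumn corresponding to the Jacquet--Langlands lift of the relevant classical eigenform; the characters $\widetilde{\psi\varepsilon^{-1}}$ and $\widetilde{\psi^{-1}\varepsilon}$ are inserted as in Section \ref{lift character}. The remaining pieces, namely the eigenvalues $a_p(f)$, $a_p(g^B_{(2,\varepsilon)}\otimes\varepsilon_{\A}^{-1})$, $a_p(h^B_{(2,\varepsilon)})$ and the character value $\varepsilon_{\A}^{-1}(p^n)$, are explicit scalars. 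Assembling all these inputs into the formula of Proposition \ref{Theta element} produces $\Theta_{{F_\infty^{B}}'}((2,\varepsilon),2,(2,\varepsilon))$ exactly (as an algebraic number) for any fixed $\varepsilon$, hence with arbitrary $p$-adic precision; the target precision on the limit is then achieved by choosing $n$ large enough.

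The main obstacle I expect is computational rather than conceptual: the size of the sum grows rapidly with $n$, since the number of length-$n$ geodesics through a fixed edge is of order $p^{n-1}(p+1)$, and the class number of $R^n$ (and hence the dimension of the Brandt space in which one must perform the simultaneous diagonalization) also grows with $n$. Controlling how fast one can bound $|\Theta_{{F_\infty^{B}}'}((2,\varepsilon),2,(2,\varepsilon))-\mathcal{L}^{bal}_{F_\infty}(2,1,1)|_p$ in terms of $n$ requires only the continuity of the Iwasawa-algebra element $\mathcal{L}^{bal}_{F_\infty}$, but making the procedure practical in any given instance is where the effort lies.
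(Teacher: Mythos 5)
Your proposal is correct and follows essentially the same route as the paper, which proves the theorem implicitly by combining Remark \ref{theta e L}(b) (the relation between $\Theta_{{F_\infty^{B}}'}$ and $\mathcal{L}^{bal}_{F_\infty}$ up to a factor in $Frac\left(\widehat{\bigotimes}_{i=1,2,3}\mathbf{I}_i\right)^{\times}$, with the value at $(2,1,1)$ realized by construction as the limit of theta-elements along the sequence $((2,\varepsilon),2,(2,\varepsilon))$) with the finite-sum formula of Proposition \ref{Theta element} and the effectivity of each ingredient via Algorithms \ref{algorithm} and \ref{algorithm-gamma} and the routines of Sections \ref{brandt matrices and explicit quaternionic modular forms} and \ref{explicit (fgh)}. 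Your additional remarks on the growth of the sum and on continuity controlling the convergence rate are consistent with the paper's own remark following the theorem.
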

\begin{remark}
	We can give a precise estimation of the convergence rate for the limit in Theorem \ref{theorem approximation L}, $\lim\limits_{\varepsilon\rightarrow 1}  \mathcal{L}^{bal}_{{F_\infty^{B}}}((2,\varepsilon),2,(2,\varepsilon))$. This comes from a straightforward application of one implication of Proposition 4.0.6 (\emph{abstract Kummer congruences}) in \cite{Katz1978}. We notice that one must compute explicitly the \emph{fudge factor} of Remark \ref{theta e L}.
\end{remark}

\subsection{Explicit computation of forms on the geodesics of the Bruhat--Tits tree}\label{explicit (fgh)}

    We report here a series of routines to compute the triple $(G^n,F^1,H^n)$ as a triple of functions on the geodesic of the Bruhat--Tits tree, as in Section \ref{p-adic double-coset spaces}. We wish to note that, \emph{a priori}, Theorem 4.2.(2) of \cite{Hsieh2021} is imposing a normalization and this is necessary for having a compatible choice of specializations. However, we remark that the choice of normalization does not invalidate the arithmetic behavior of the limit.\\

    We begin with the following procedure, which translates a quaternionic modular form to a form on finite-length geodesics. 
    \begin{routine}\label{tildeF}\phantom{0}
        \begin{enumerate}[label ={\emph{\textbf{Step \arabic*:}}},align=left]
            \item Apply Algorithmic routine \ref{alg routine quaternionic mod forms} to compute the sought-for quaternionic modular form, say $\varphi$. In the computation, keep track of the ideals representing the ideal classes.
        
        	\item Apply Algorithm \ref{algorithm-gamma} to produce a set of representatives for the corresponding level structure.
        
            \item Consider the approximated embedding in the algorithms of \cite{FrancMasdeu2014} to pull the matrix representative of \textbf{Step 2} back to elements of the quaternion algebra. Test the ideals computed in \textbf{Step 1} to find in which ideal each one of these representatives lies (up to a scalar in $B^\times$).
            
            \item If \textbf{Step 3} does not recover a 1:1 correspondence, increase the working precision of the embedding and go back to \textbf{Step 3}. 
        \end{enumerate}
    \end{routine}
    
    To compute the triple $(G^n,F^1,H^n)$, we must address the projections from higher to lower level. This procedure relies on Lemma \ref{proj maps} and its $PGL_2(\qp)$-equivariance as well as the fact that we are working under a common level structure given by the Eichler order $R^n$. Let $\tilde{F}$ be the from obtained the above routine. To determine the values of $\tilde{F}$ on the representatives of a smaller Eichler order, we apply the following two routines; the first one if we are varying the level at $p$, while the second one for level away from $p$. Consider Notation \ref{Eichler notation}.\\
    
    The following procedure computes the values of a form $\tilde{F}$, of level $R^m$, on the representatives for $R^n$, with $n>m$.
    \begin{routine}\label{red pn}\phantom{0}
        \begin{enumerate}[label ={\emph{\textbf{Step \arabic*:}}},align=left]
            \item Apply Algorithm \ref{algorithm-gamma} to produce a set of representatives for $R^n$.
            \item For any of these representatives, compute its class as a length-$m$ geodesic with the algorithm of Lemma \ref{coset matrices2}.
            \item Keep track of the reductions and match the corresponding values of $\tilde{F}$.
        \end{enumerate}
    \end{routine}
    
    The following procedure computes the values of a form $\tilde{F}$, of level $R_2^n$, on the representatives for $R^n$.
    \begin{routine}\label{red N}\phantom{0}
        \begin{enumerate}[label ={\emph{\textbf{Step \arabic*:}}},align=left]
            \item Apply Algorithm \ref{algorithm-gamma} to produce a set of representatives for $R^n$.
            \item For any of these representatives, compute its class under $R^n$ applying Algorithm \ref{algorithm-gamma}.
            \item Keep track of the reductions and match the corresponding values of $\tilde{F}$.
        \end{enumerate}
    \end{routine}
    
    Reiterating the above routines we obtain the forms $(G^n,F^1,H^n)$ from the triple $(\tilde{G}^n,\tilde{F}^1,\tilde{H}^n)$.
    
    \begin{remark}\label{geod and twist}
        \begin{enumerate}
            \item In Proposition \ref{Theta element}, we should consider the twist by the character $\varepsilon$. Since the Jacquet--Langlands correspondence is compatible with twists by Dirichlet characters, one can proceed as above, but running Algorithmic routine \ref{alg routine quaternionic mod forms} with the twist of a classical modular form and computing a corresponding quaternionic modular form.

            \item It remains to describe a simple task, namely the enumeration of representatives in $Geod_n(\mathcal{T})(e)$, for any class determined by an edge $e$ under $R^1[1/p]^\times$. This is done similarly to the computation of $F^1$ from $\tilde{F}^1$. More precisely, we must check which representative of $Geod_n(\mathcal{T})$ under $R^1[1/p]^\times$ is equivalent, as a matrix representing an edge, to the representative of $e$ in $R^1[1/p]^\times\backslash\mathcal{E}(\mathcal{T})$.
        \end{enumerate}
     \end{remark}
    
\subsection{A few final remarks and some motivation}

\subsubsection{A note about effective computability}\label{effective computability}

    We report here the steps involved for an effective approximation of the limit value $\lim \Theta_{{F_\infty^{B}}'}(\underline{Q})$. The algorithmic routine is as follows.
    
    \begin{routine}\label{full routine}\phantom{0}
        \begin{enumerate}[label ={\emph{\textbf{Step \arabic*:}}},align=left]
        	\item Consider a $p$-adic character of finite conductor and compute its values, together with its distance from the weight-1 character. We have hence the $p$-adic precision we are working with.
        	\item Compute a basis of the needed quaternionic orders, as in Section \ref{basis for orders}.
        	\item Run the algorithms in Sections \ref{p-adic double-coset spaces}, \ref{the algorithm} and \ref{the second algorithm}.
        	\item Fix suitable Jacquet--Langlands lifts for the Hida families and compute the values of the quaternionic specializations applying Algorithmic routine \ref{alg routine quaternionic mod forms}.
        	\item Apply Algorithmic routines \ref{tildeF}, \ref{red pn} or \ref{red N} to compute the values of the triple $(G^n,F^1,H^n)$.
        	\item Compute the action given by the matrices $\mat{1}{p^{-n}}{0}{1}$ and $\mat{0}{p^{-n}}{-p^n}{0}$ on the set of representatives obtained in the above \textbf{Step 3}.
        	\item Match the values obtained in \textbf{Step 5} with the shifts in \textbf{Step 6}.
        	\item Compute the cardinality of the stabilizers as well as the power of the eigenvalues.
        	\item Compute the finite sum in Proposition \ref{Theta element}.
        \end{enumerate}
    \end{routine}

\subsubsection{A note about effective implementation}\label{effective implementation}

    The algorithmic procedure delineated in this note is not yet supported by a reasonable pool of calculations, a lack we wish to address in the future. Unfortunately, at the present moment, there are two technical obstructions that prevent a systematic implementation.
    \begin{enumerate}
        
        \item The implementation of Brandt matrices with characters, whose main difficulty is represented by the lift of the character.
        
        \item The developed procedures require a remarkable amount of computational time and resources, as the complexity grows exponentially in the length of the geodesics, hence on the precision required.
        
    \end{enumerate}
    
\subsubsection{Some motivation for the study of the limit value at $(2,1,1)$}\label{meaning  and motivation}

	We would like to add to this note some final theoretical remarks on the arithmetic importance of the limit value at $(2,1,1)$. Let $(f,g,h)$ be a triple of modular forms as in Assumption \ref{choice of Hida families}. We can further assume that the elliptic curve associated with $f$ has analytic rank (greater) equal to 1. The Birch--Swinnerton-Dyer conjecture predicts that the algebraic rank of $E$ would be equal to its analytic rank. As above, we take the three Hida families $(f_\infty, g_\infty, h_\infty)$ passing through $(f,g,h)$. Suppose (see the next Section \ref{note on weight 1}) that $g_\infty$ and $h_\infty$ specialize to classical weight-1 modular forms; in other words, we suppose that the weight-1 specializations are associated with two Artin representations, $\rho_1$ and $\rho_2$. The value $\mathcal{L}_{(f_\infty, g_\infty, h_\infty)}^{bal}(2,1,1)$ is no more directly linked with the central value of the complex $L$-function, as the point $(2,1,1)$ lies outside the (balanced) region of interpolation. However, one expects a relation between this value and the Galois representation associated with the triple $(E,\rho_1,\rho_2)$, in a $\rho_1\otimes\rho_2$-equivariant instance of the $p$-adic BSD conjecture.
	
	The literature is rich in articles studying the values of $p$-adic $L$-functions outside the interpolation region, most notably \cite{BertoliniDarmonPrasanna2012}, where the value at $1$ of the BDP $p$-adic $L$-function for an elliptic curve is related to the $p$-adic logarithm of a Heegner point. The main tool applied in \emph{op.cit.} is the $p$-adic Gross--Zagier formula proved in \cite{BertoliniDarmonPrasanna2013}, relating images of (generalized Heegner) cycles via a $p$-adic Abel--Jacobi map to the value of the $p$-adic $L$-function. This approach revealed itself to be useful also in the case of the \emph{unbalanced} triple product $p$-adic $L$-function and its values in points outside the interpolation region. Some of the main works are \cite{DarmonRotger2014}, \cite{DarmonLauderRotger2015}, \cite{DarmonRotger2017} and the recent collective volume \cite{BDRSV_Asterisque_2022}. In \emph{op.cit.}, the geometric connotation is predominant, as the $p$-adic $L$-function is related to an indefinite quaternion algebra. In such a situation, one can study diagonal cycles on a triple product of modular curves (Shimura varieties) and associate their image via a $p$-adic Abel--Jacobi map to the special value of the $p$-adic $L$-function in a point outside the interpolation region or, more generally, to a suitable (crystalline) cohomology class, which might be interpreted as the $p$-adic avatar of the derivative of the complex $L$-function. The situation treated in this note is the opposite, namely one considers the balanced $p$-adic $L$-function which is tied to a definite quaternion algebra. Here, the geometry is quite poor as the Shimura variety is zero-dimensional. A few remarks must be made.
	\begin{enumerate}
	    \item The core of the construction of the $p$-adic $L$-function in \cite{Hsieh2021} are the \emph{regularized diagonal cycles} of its Section 4.6. These are cycles in the product of the three zero-dimensional Shimura varieties, but there is not a straightforward $p$-adic Abel--Jacobi map.
	    \item As already noticed in Remark 1.1 of \cite{Hsieh2021}, one can speculate about a relation between a $p$-adic Abel--Jacobi image of a cycle in a Shimura variety associated with a quaternion algebra ramified at $\ell$ and $p$, as in \cite{BertoliniDarmon2007}.
	    \item In the interpolation region of the balanced triple product $p$-adic $L$-function, one should not expect a meaningful relation between diagonal cycles over $\q$ and the triple product $L$-function as the latter is not necessarily zero. However, one can still construct diagonal cycles on the triple product of modular curves (or suitable indefinite Shimura varieties). Such an example can be found in Lemma 4.1 of \emph{``p-adic families of diagonal cycles''} in \cite{BDRSV_Asterisque_2022}. These peculiar cycles are defined over $\q\left(\sqrt{\left(\tfrac{-1}{p}\right)p}\right)$ and have been rather extensively studied in Sections 4.2.2 and 4.3.3 of \cite{LilienfeldtPhD2021}, where the author investigates their difference and propose a refined Beilinson--Bloch conjecture. It would be interesting to understand if there is a relation between these cycles and the zero-dimensional cycles in \cite{Hsieh2021}.
	    \item 	In a recently announced work, Andreatta--Bertolini--Seveso--Venerucci construct a new $p$-adic $L$-function considering Yoshida endoscopic lifts to $GSp_4$. They are able to study diagonal cycles on the product of Shimura varieties for $GSp_4\times GL_2\times GL_2$. This allows them to obtain a geometric interpretation of their $p$-adic $L$-function at the limit point $(2,1,1)$ as a $p$-adic limit of classes arising in the cohomology of the Siegel threefold attached to $GSp_4$.
	\end{enumerate}
	Having the above framework and motivation in mind, the advantages of an effective computation of the limit value become clear. We can test, to a given $p$-adic precision, whether the limit is zero or not (possibly providing a criterion), which must be an indicator of the existence of a non-trivial class. If the limit is non-zero, one could hope to obtain numerical clues for the behavior of the limit, similarly to \cite{DarmonLauderRotger2015}.
	
\subsubsection{A note on weight-1 modular forms and generalizations}\label{note on weight 1}
	
	The presence of classical modular forms of weight 1 (and not only $p$-adic) in a Hida family is determined by the behavior of the local automorphic representations associated with the specializations.
	\begin{lemma}[\cite{DG2012}, Lemma 4.4, or \cite{Dimitrov2014}, Proposition 1.8]\label{classical wt 1}
		Let ${f_\infty}$ be a Hida family of tame level $N$ and tame character $\psi$. If ${f_\infty}$ is of special type at some $\ell$ dividing $N$ (that is, if and only if $\ell||N$ and $\psi$ is trivial at $\ell$), then ${f_\infty}$ has no classical weight-1 specializations.
	\end{lemma}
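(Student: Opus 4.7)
The plan is to combine two ingredients: the Deligne--Serre attachment of an Artin representation to a classical weight-1 eigenform, and the rigidity of the local automorphic type at primes of the tame level along a Hida family. The upshot will be that the assumed local type at $\ell$ is incompatible with finite image of the Galois representation, which classical weight-1 specializations must have.

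First, I would recall that, by Deligne--Serre, any classical cuspidal eigenform of weight 1 gives rise to an odd, irreducible Artin representation $\rho_f:G_\mathbb{Q}\to GL_2(\mathbb{C})$ with \emph{finite} image. By local-global compatibility of the Langlands correspondence, the restriction $\rho_f|_{G_{\mathbb{Q}_\ell}}$ corresponds via local Langlands to the local component $\pi_{f,\ell}$. Since $\rho_f$ has finite image, the associated Weil--Deligne representation at $\ell$ has trivial monodromy operator. This already excludes Steinberg (special) local type, as Steinberg representations are characterized by nonzero nilpotent monodromy. Consequently, no classical weight-1 eigenform can be of special type at any finite prime.

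Next, I would show that under the hypothesis ``$\ell \| N$ and $\psi$ trivial at $\ell$'', the local component at $\ell$ of every arithmetic specialization of $f_\infty$ must in fact be of special (Steinberg) type. This uses local newform theory for $GL_2(\mathbb{Q}_\ell)$: a cuspidal automorphic representation with conductor exactly $\ell$ and unramified central character at $\ell$ is necessarily an unramified quadratic twist of the Steinberg representation. Combined with Hida's theorem, which guarantees that the tame conductor and tame character are constant along the family, this fixes the local type at $\ell$ for every arithmetic specialization (and hence, would have to hold for a hypothetical weight-1 classical specialization). Combining the two steps yields the desired contradiction.

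The main obstacle I expect is the second step, namely pinning down rigorously the constancy of the local automorphic type at $\ell$ along the Hida family. The cleanest route is probably to invoke the $\ell$-adic Galois representation attached to $f_\infty$ by Hida's control theorems and to observe that its restriction to a decomposition group at $\ell$ deforms only by unramified twists, so that the monodromy operator of the associated Weil--Deligne representation is invariant. A more hands-on alternative is to directly appeal to Casselman's theorem on local newforms applied uniformly to each arithmetic specialization. Either route reduces the problem to the formal incompatibility established in the first step.
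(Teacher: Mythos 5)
The paper itself does not prove this lemma: it is quoted verbatim from the cited references (Dimitrov--Ghate and Dimitrov), and your argument is in spirit the one found there, namely: a classical weight-one specialization carries an odd Artin representation with \emph{finite} image by Deligne--Serre, whereas being of special type at $\ell$ forces the local (automorphic, equivalently Galois) data at $\ell$ into a Steinberg shape incompatible with finite image. Your first step is correct and complete, and your reduction of the hypothesis ``$\ell\|N$ and $\psi$ trivial at $\ell$'' to ``unramified twist of Steinberg at $\ell$'' via conductor and central character considerations is also correct for the weight-$\geq 2$ specializations.

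The gap is in how you propagate this to a hypothetical weight-one point. The claim that ``the monodromy operator of the associated Weil--Deligne representation is invariant'' along the family is not available and is in fact the wrong thing to aim for: in a family the vanishing locus of the monodromy is only Zariski closed, and at a weight-one classical point the monodromy \emph{would} have to vanish (finite image forces $N=0$), so no contradiction can come from monodromy alone. Similarly, the ``hands-on'' route via Casselman's newform theory presupposes that the conductor at $\ell$ of the weight-one specialization is still exactly $\ell$, which is precisely what could fail if the local representation degenerates. The standard repair, used in the cited references, bypasses monodromy entirely: since the weight-$\geq 2$ specializations are special at $\ell$, the big Galois representation restricted to $G_{\mathbb{Q}_\ell}$ is reducible with diagonal characters $\tilde\lambda\,\varepsilon_{cyc}$ and $\tilde\lambda$, where $\tilde\lambda$ is unramified and $\varepsilon_{cyc}$ is the $p$-adic cyclotomic character; this shape persists at the level of semisimplifications under \emph{every} specialization. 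Hence for any specialization $Q$ the ratio of the two Frobenius eigenvalues of $\rho_{Q}|_{G_{\mathbb{Q}_\ell}}$ is $\varepsilon_{cyc}(\mathrm{Frob}_\ell)=\ell$, which is not a root of unity, so the local image is infinite --- contradicting Deligne--Serre without any claim about the extension class or the conductor at the weight-one point.
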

	The triple product $p$-adic $L$-function constructed in \cite{Hsieh2021} deals only with Eichler orders, as the main working assumption is, with the above notation, the following.
	\begin{assumption}\label{hyp}
		For $M=\lcm(N_1,N_2)$, assume $\gcd(\ell,M/\ell)=1$.	
	\end{assumption}
	\noindent Assumption \ref{hyp} implies that the only local automorphic representations at $\ell$ transferring, via Jacquet--Langlands, to the quaternion algebra $B$ are the ones of special type.
	\begin{remark}
	    \begin{enumerate}
	        \item We deduce that our chosen ${g_\infty}$ and ${h_\infty}$ cannot contain classical weight-1 specializations. We must choose families with supercuspidal type at $\ell$ if we want to take Hida families passing through classical weight-1 modular forms. We refer to Section 3 of \cite{DallAva2021PhD} for a more verbose treatment and point out that one should also take into account the careful analysis in \cite{P1990}.
	        \item In \cite{GreenbergSeveso2020}, the authors provide a more general formalism for the construction of triple product $p$-adic $L$-functions, however they precisely solve the interpolation problem only in the case of Eichler orders.
	        \item The impossibility to reach classical weight-1 modular forms using the formula of Hsieh motivates the development of a rank-2 Hida theory for quaternionic lifts as in \cite{DallAva2022Hida}.
	    \end{enumerate}
	\end{remark}
	\noindent Even though these observations appear discouraging, they hide interesting arithmetic aspects. More precisely, one can still apply the algorithm to understand the limit value, even if it describes some non-standard object. Moreover, the algorithms devised in this note can be easily generalized to deal with more general quaternionic orders, as they only rely on a sufficiently explicit formulation of the balanced triple product $p$-adic $L$-function. In fact, \cite{DallAva2022Hida} is just the starting point to solve a more general interpolation problem, which we wish to address carefully in future works. Such an interpolation problem would fit perfectly in the framework announced by Andreatta--Bertolini--Seveso--Venerucci (see Section \ref{meaning  and motivation}), and it could be applied to provide computational support in developing precise arithmetic conjectures.

\appendix

\section{An illustrative example}
	
	We present an example for clarifying the algorithmic procedure developed. We do not aim to produce a meaningful computation of the limit point $(2,1,1)$ both for theoretical and technical reasons (see Section \ref{effective implementation}). We are however considering an interesting situation of \emph{exceptional zero phenomenon} for the balanced triple product $p$-adic $L$-function (see Remark 1.1. in \cite{Hsieh2021}). 
	
	A great part of the data in this example is either computed via Franc and Masdeu's code \cite{FrancMasdeuSagecode2011} or retrieved from the LMFDB database \cite{lmfdb} and the examples in \cite{Pizer1980}.
	
\subsection{The setting}
    	
	Let $f=g=h$ be a classical cuspidal modular form corresponding to the unique normalized newform
	\begin{equation*}
	    f(z)= q - q^2 - q^3 - q^4 + q^5 + q^6 + 3 q^8 + q^9+\cdots \in S_2(\Gamma_0(15))^{new}.
	\end{equation*}
	It corresponds to the class of the elliptic curve $y^2+xy+y=x^3+x^2-10x-10$ (Cremona label 15a1, LMFDB label 15.a5), which represents a model for the modular curve $X_0(15)$. We remark that this is not the choice of elliptic curve one should have in mind for applications to the study of the limit point $(2,1,1)$, since its algebraic rank is 0. This form is ordinary at $p=5$, with Hecke eigenvalue $1$, and it has Steinberg automorphic type at $\ell=3$. 

\subsection{The representatives}	

	We launch Franc and Masdeu's algorithm as 
	\begin{verbatim}
	    sage: X = BruhatTitsQuotient(5,3,1);
	    sage: plot(X.plot_fundom()) #Figure 1;
    sage: plot(X.get_graph())   #Figure 2;
	\end{verbatim}
	we obtain the fundamental domain in Figure \ref{figure1} and the quotient graphs of the Bruhat--Tits in Figure \ref{figure2}. The algorithm computes two representatives of the edges in Figure \ref{figure2}, 
	\begin{verbatim}
	    sage: print([X.get_edge_list()[0].rep, X.get_edge_list()[1].rep]);
	\end{verbatim}
	obtaining $\mat{1}{0}{0}{1}$ and $\mat{0}{1}{1}{0}$.
	\begin{figure}[ht!]
        \centering
        \begin{tabular*}{\linewidth}{*{2}{>{\centering\arraybackslash}p{\dimexpr0.5\linewidth-2\tabcolsep}}}
            \includegraphics[width=0.75\linewidth]{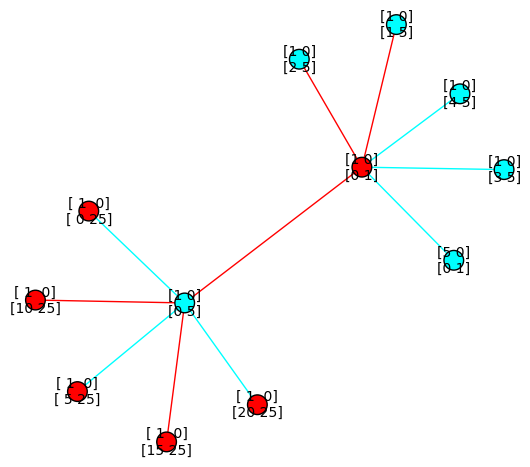}
            &   \includegraphics[width=0.375\linewidth]{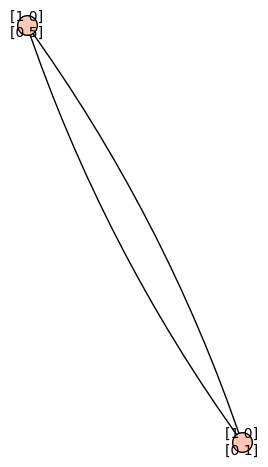}   \\
            \caption{A fundamental domain.}  \label{figure1}
            &   \caption{The quotient graph.}  \label{figure2}
        \end{tabular*}
    \end{figure}
    
    \noindent Therefore, the harmonic cocycle $c_f$ associated with $f$ is the function mapping
	\begin{align*}
	    \Mat{1}{0}{0}{1} \longmapsto 1 && \textrm{ and } && \Mat{0}{1}{1}{0} \longmapsto -1,
	\end{align*}
	with orientation; starting from the center of the fundamental domain, \emph{i.e.} the inner red node in Figure \ref{figure2}, we assign value $+1$ to the red edges with orientation compatible with the edges leaving the center, and value $-1$ to the light blue ones.

\subsection{The quaternionic modular forms}
	
	Let $B$ be the usual quaternion algebra with $\ell=3$ and $R^1$ be an Eichler order of level $5$ in $B$. We can write $B$ with respect to the basis 1, $i$, $j$, $k$, where $i^2=-1$, $j^2=-3$ and $ij=k$. We compute (see Example \ref{example element of order 1/p}) a basis for the Eichler order as
	\begin{equation*}
	    R^1=\langle \tfrac{1}{2}(1+j+2k), \tfrac{1}{2}(i+5k), \tfrac{1}{2}(j+2k),5k\rangle_\z.
	\end{equation*}
	Its class number is 2. As explained in Section \ref{brandt matrices and explicit quaternionic modular forms} and worked out in Example 2 of \cite{Pizer1980}, one computes (enough) Brandt matrices, a few of which are
	\begin{align*}
	    B(2,R^1)=\Mat{1}{2}{2}{1}, && B(3,R^1)=\Mat{0}{1}{1}{0}, && B(5,R^1)=\Mat{6}{5}{5}{6}.
	\end{align*}
	It is readily computed that they are simultaneously diagonalized by
	\begin{equation*}
	    C(R^1)=\Mat{1}{1}{1}{-1}.
	\end{equation*}
	Let $\varphi$ be the quaternion modular form of weight $2$ associated with the vector $\left(\begin{smallmatrix}
		1\\
		-1
	\end{smallmatrix}\right)$; it maps the ideal classes $[I_1=R^1]\mapsto 1$ and $[I_2]\mapsto -1$. Here the ideal $I_2$ is determined by
	\begin{equation*}
	    I_2=\langle 3+j+2k, 3i+5k, 2j+4k, 10k\rangle_\z.
	\end{equation*}
	This form corresponds via the Jacquet--Langlands correspondence to the modular form $f$ considered above. It does not identify immediately with the harmonic cocycle $c_f$, as in Example \ref{example element of order 1/p} we compute that there is a discrepancy between $\left(R^1[1/5]\right)^1$ and $\left(R^1[1/5]\right)^\times$. In particular, the quaternionic modular form identifies with a function $\widetilde{F^1}$ on the graph without orientation, while the harmonic cocycle is a function on the oriented one (see Remark \ref{BD and D remark on norm p}). 
	
\subsection{The function on the edges}\label{function on the edges}

	Even though it is trivial in this example, we explain how to match ideal classes and representatives of the edges, in order to define the value of the form $F^1$ on $Geod_1(\mathcal{T})$ associated with $\varphi$. With respect to the basis of the maximal order
	\begin{equation*}
	    R^1_{3}=\langle \tfrac{1}{2}(1+j), \tfrac{1}{2}(i+k), j, k\rangle_\z,
	\end{equation*}
	one computes the embedding of $B$ into the matrix algebra $M_2(\q_5)$. In particular, one has
	\begin{verbatim}
    sage: u1 = Matrix(QQ,4,1,[2,0,-1,0]);
    sage: M1 = X.embed_quaternion(u1,prec=20)
	\end{verbatim}
	which outputs the matrix
	\begin{equation*}
	    \Mat{1+O(5^{20})}{O(5^{20})}{O(5^{20})}{1+O(5^{20})}.
	\end{equation*}
	Similarly, we consider
	\begin{verbatim}
    sage: u2 = Matrix(QQ,4,1,[0,6,0,-2])
    sage: M2 = X.embed_quaternion(u2,prec=20)
	\end{verbatim}
	with output
	\begin{equation*}
	    \Mat{O(5^{20})}{1 + 4 \cdot 5 + 5^{2} + 4 \cdot 5^{3} + 4 \cdot 5^{4} + \cdots + 2 \cdot 5^{19} + O(5^{20})}{3 + 4 \cdot 5^{2} + 3 \cdot 5^{4} + \cdots + 5^{19}+O(5^{20})}{O(5^{20})},
	\end{equation*}
	where the product of the element on the anti-diagonal can be computed to be $-12$. We retrieve the representative of the class defined by the above matrix, in the quotient of the Bruhat--Tits tree, as
	\begin{verbatim}
	    sage: T = BruhatTitsTree(5);
    sage: print(T.edge(M2));
	\end{verbatim}
	which returns $\mat{0}{1}{1}{0}$. On the other hand, we can recover the starting quaternionic element from
	\begin{verbatim}
	    sage: M_edge = X.get_edge_list()[1].rep;
	    sage: print(X._are_equivalent(M_edge, M_edge, as_edges=True));
	\end{verbatim}
	obtaining
    \begin{equation*}
        \left(\left(\begin{matrix}
            0 & -2 & 0 & 1
        \end{matrix}\right)^t, 0\right).
    \end{equation*}
    We can then test if the vector (in the coordinates of the basis of the maximal order) belongs to one of the ideal classes.

\subsection{The value of the $p$-adic $L$-function}

    In this situation, it is rather easy to determine the cardinality of stabilizers of the edges from the Brandt matrix $B(0,R^1)=\tfrac{1}{4}\mat{1}{1}{1}{1}$. That is, each stabilizer has cardinality $\#Stab_{\left({R^1}\left[1/5\right]\right)^\times}(e)=2$. More generally, one can compute the stabilizers in $(R^1[1/5])^1$ with 
    \phantom{000..}\verb|sage: s = X.get_edge_stabilizers();|\newline
    The length of this vector encodes the cardinality of the stabilizers since we know that the norm-$5$ element is not fixing the edges. Taking into account all the computed data, Proposition \ref{Theta element} reads
    \begin{equation*}
		\Theta_{{(f_\infty^{\times 3})^{B}}'}(\underline{2})= 5^{2}(1-5^{-1})\cdot\sum_{e\in \left({R^1}\left[1/5\right]\right)^\times\backslash \mathcal{E}(\mathcal{T})} \frac{F^1(e)}{2}  F^1(e\left(\begin{smallmatrix}
			1 & 5^{-1}\\
			0 & 1
		\end{smallmatrix}\right))\cdot F^1(e\left(\begin{smallmatrix}
			0 & 5^{-1}\\
			-5 & 0
		\end{smallmatrix}\right)).
	\end{equation*}
	\begin{remark}
	    We recall that in the general case, one would need to apply the Algorithmic routines \ref{tildeF}, \ref{red pn} and \ref{red N}, as well as compute the sets $Geod_n(\mathcal{T})(e)$ and the twist for $g$, as in Remark \ref{geod and twist}.
	\end{remark}
	We check now the action of the two matrices. We multiply by 5 the twisting matrices and take the product with each generator. We then proceed as in Section \ref{function on the edges}, checking the equivalence as edges, and determine to which class they belong. We obtain that
	\begin{equation*}
	    \Mat{1}{0}{0}{1}\cdot \left(\begin{matrix}
			1 & 5^{-1}\\
			0 & 1
		\end{matrix}\right)\sim \Mat{1}{0}{0}{1}\cdot \left(\begin{matrix}
			0 & 5^{-1}\\
			-5 & 0
		\end{matrix}\right) \sim \Mat{0}{1}{1}{0},
	\end{equation*}
	and that
	\begin{align*}
		\Mat{0}{1}{1}{0}\cdot \left(\begin{matrix}
			1 & 5^{-1}\\
			0 & 1
		\end{matrix}\right)\sim \Mat{0}{1}{1}{0}, && \Mat{0}{1}{1}{0}\cdot \left(\begin{matrix}
			0 & 5^{-1}\\
			-5 & 0
		\end{matrix}\right) \sim \Mat{1}{0}{0}{1}.
	\end{align*}
	We finally have
	\begin{equation*}
		\Theta_{{(f_\infty^{\times 3})^{B}}'}(\underline{2})= 5\cdot 2 \cdot \left(1 \cdot (-1)\cdot (-1) + (-1) \cdot (-1)\cdot 1 \right)= 20.
	\end{equation*}
{\setstretch{1.0}
	\bibliography{Bibliography}
	\bibliographystyle{amsalpha}}
{   \hypersetup{hidelinks}	
	\Addresses}

\end{document}